\numberwithin{equation}{section}
\newcounter{cs}
\newcounter{ds}
\newcommand{\casos}{\begin{itemize}}
\newcommand{\fcasos}{\end{itemize}\setcounter{cs}{1}}
\newcommand{\famm}[2]{\left({#1}\mid{#2}\right)}
\newcommand{\ol}{\overline}
\newtheorem{lem}{Lemma}[section]
\newtheorem{theor}[lem]{Theorem}
\newtheorem*{theor*}{Theorem}
\newtheorem{prop}[lem]{Proposition}
\theoremstyle{definition}
\newtheorem{defi}[lem]{Definition}
\newtheorem{exem}[lem]{Example}
\newtheorem{remark}[lem]{Remark}
\newcommand{\setm}[2]{\{{#1}\mid{#2}\}}
\DeclareMathOperator{\rL}{L}
\newcommand{\dnw}{\mathbin{\downarrow}}
\newfont{\gd}{eufm10 scaled \magstep1}
\newfont{\gs}{eufm7 scaled \magstep1}
\newfont{\gss}{eufm5 scaled \magstep1}
\newcommand{\Gbd}[1]{\mbox{\gd #1}}
\newcommand{\Gbs}[1]{\mbox{\gs #1}}
\newcommand{\Gbss}[1]{\mbox{\gss #1}}
\newcommand{\got}[1]{\mathchoice{\Gbd #1}{\Gbd #1}{\Gbs #1}{\Gbss #1}}
     \newcommand{\mon}[1]{\mathcal{V}(#1)}              
     \newcommand{\cb}[0]{K}                             
\begin{document}
\title[The regular algebra of a poset]{The regular algebra of a poset}
\author{Pere Ara}\address{Departament de Matem\`atiques, Universitat Aut\`onoma de
Barcelona, 08193, Bellaterra (Barcelona),
Spain}\email{para@mat.uab.cat}

\thanks{Partially supported by the DGI
and European Regional Development Fund, jointly, through Project
MTM2005-00934, and by the Comissionat per Universitats i Recerca de
la Generalitat de Catalunya. } \subjclass[2000]{Primary 16D70;
Secondary 16E50, 06F05, 46L80} \keywords{von Neumann regular ring,
poset, primitive monoid, Toeplitz algebra, Leavitt path algebra}
\date{\today}

\begin{abstract}
Let $K$ be a fixed field. We attach to each finite poset $\mathbb P$
a von Neumann regular $K$-algebra $Q_K(\mathbb P)$ in a functorial
way. We show that the monoid of isomorphism classes of finitely
generated projective $Q_K(\mathbb P)$-modules is the abelian monoid
generated by $\mathbb P$ with the only relations given by $p=p+q$
whenever $q<p$ in $\mathbb P$. This extends the class of monoids for
which there is a positive solution to the realization problem for
von Neumann regular rings.
\end{abstract}
\maketitle

\section*{Introduction}
\label{sect:intro}

An old theorem of Jacobson \cite{Jacobson} states that, given a
field $K$, the algebra $A=K\langle x,y:yx=1\rangle $ has the
following uniqueness property: Given any other $K$-algebra $R$ with
elements $a,b$ such that $ba=1$ and $ab\ne 1$, the unique
$K$-algebra homomorphism  $A\to R$ sending $x$ to $a$ and $y$ to
$b$, is one-to-one. A very natural representation of $A$ is the one
given by the algebraic analogue of the Toeplitz algebra. This is
defined as the subalgebra of $E=\text{End}_K(K[z])$, generated by
the unilateral shift  $b$, given by multiplication by $z$, and the
endomorphism $a\in E$ defined by $(z^i)a=z^{i-1}$ for $i\ge 1$ and
$(1)a=0$. (Note that here endomorphisms act on the right of their
arguments.) Clearly, $ba=1$, but $1-ab$ is the projection onto the
one-dimensional subspace $1\cdot K$ of $K[z]$, with kernel $zK[z]$.

Let $\psi \colon A\to E$ be the unique $K$-algebra homomorphism
sending $x$ to $a$ and $y$ to $b$. For any polynomial $f\in K[x]$
such that $f(0)\ne 0$, the image $\psi (f)$ is invertible in $E$,
because the power series defining $f^{-1}$ is convergent in $E$. It
follows that there is a unique homomorphism $\ol{\psi}\colon
A\Sigma^{-1}\to E$ extending $\psi$. Here the algebra $A\Sigma
^{-1}$ is the {\it universal localization} of $A$ with respect to
$\Sigma$, that is, the algebra obtained from $A$ by formally
inverting all the polynomials in $\Sigma$, see \cite{free} and
\cite{scho}. It turns out that the algebra $A\Sigma ^{-1}$ is a von
Neumann regular ring, and that the map $\ol{\psi}$ is also
injective, thus providing a concrete faithful representation of it.
The algebra $A\Sigma ^{-1}$ can be identified with the algebra
$Q_K(E_1)$ of \cite{AB2}, associated with the quiver $E_1$ described
below.

\begin{figure}[htb]
 \[
 {
 \xymatrix{
 & v_{1}\ar@(ul,ur)\ar[d]  \\
 & v_{0}
 }
 }
 \]
\caption{The quiver $E_1$.} \label{Fig:QuiverS1}
\end{figure}
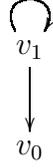

An algebra similar to $A\Sigma ^{-1}$ was used in \cite{MM} to
give the first example of a von Neumann regular ring with stable
rank $2$.

The purpose of this paper is to construct a new class of
$K$-algebras, yielding a wide generalization of the above
(algebraic) Toeplitz algebras. For each finite poset $\mathbb P$, we
will construct a $K$-algebra $Q_K(\mathbb P)$, in such a way that
the one corresponding to the poset $\mathbb P =\{q,p \}$, where the
only non-trivial relation is given by $q<p$, is precisely the
algebra $Q_L(E_1)$, with $L=K(t_1,t_2, \dots )$ a purely
transcendental extension of $K$, and $E_1$ is the quiver described
before. (For technical reasons, it is convenient to have such an
infinite number of variables at our disposal.) In general, there is
a natural faithful representation of the algebra $Q_K(\mathbb P)$ on
a vector space $V(\mathbb P)$, which is given locally by Toeplitz
operators (Theorem \ref{theor:repres}).

In order to put the construction in a wider perspective we need some
preliminary definitions. For any ring $R$, the monoid $\mon{R}$ of
isomorphism classes of finitely generated projective $R$-modules is
always a {\it conical monoid}, that is, whenever $x+y=0$, we have
$x=y=0$. Recall that an {\it order-unit} in a monoid $M$ is an
element $u$ in $M$ such that for every $x\in M$ there is $y\in M$
and $n\ge 1$ such that $x+y=nu$. Observe that, if $R$ is a unital
ring, then $[R]$ is a canonical order-unit in $\mon{R}$. By results
of Bergman \cite[Theorems 6.2 and 6.4]{Bergman} and Bergman and
Dicks \cite[page 315]{BD}, any conical monoid with an order-unit
appears as $\mathcal V (R)$ for some unital hereditary ring $R$.

For a von Neumann regular ring $R$, the monoid $\mon{R}$  is in
addition a refinement monoid. Recall that an abelian monoid $M$ is a
{\it refinement monoid} in case any equality $x_1+x_2=y_1+y_2$
admits a refinement, that is, there are $x_{ij}$, $1\le i,j \le 2$
such that $x_i=x_{i1}+x_{i2}$ and $y_j=x_{1j}+x_{2j}$ for all $i,j$,
see e.g. \cite{AMP}. It is an outstanding open problem to decide
whether all countable, conical refinement monoids can be represented
as monoids $\mon{R}$ for a von Neumann regular ring $R$, see
\cite{directsum}, \cite{AB2}. It was shown by Wehrung in
\cite{Wehisrael} that there are conical refinement monoids of size
$\aleph _2$ which cannot be represented. We refer the reader to
\cite{Areal} for a recent survey on this problem.

Note that if a monoid $M$ is realizable by a von Neumann regular
ring $R$, i.e. $\mon{R}\cong M$, and $M$ has an order-unit $u$, then
there is an idempotent $e\in M_n(R)$, for some $n\ge 1$, which
corresponds to $u$ through the isomorphism, and then $eM_n(R)e$ is a
unital von Neumann regular ring realizing $M$.

A class of monoids whose members are expected to be realizable is
the class of finitely generated conical refinement monoids. These
monoids enjoy good monoid-theoretic properties, the most important
being that they are primely generated \cite[Corollary 6.8]{Brook01}.
Recall that every monoid $M$ is endowed with a natural pre-order,
the so-called {\it algebraic pre-order}, given by $x\le y$ if and
only if there is $z\in M$ such that $y=x+z$. A {\it prime element}
in an (abelian) monoid $M$ is an element $p$ such that $p\nleq 0$
and, whenever $p\le a+b$, then either $p\le a$ or $p\le b$. The
monoid $M$ is said to be {\it primely generated} in case every
element of $M$ is a sum of primes. Note that a finitely generated
monoid has always an order-unit, namely the sum of a finite
generating set.

We say that $M$ is {\it antisymmetric} in case the algebraic
pre-order $\le$ is actually a partial order. Observe that every
antisymmetric monoid is conical.

A {\it primitive} monoid is a primely generated antisymmetric
refinement monoid. A primitive monoid is completely determined by
its set of primes $\mathbb P (M)$ together with a transitive and
antisymmetric relation $\lhd$ on it, given by $$q\lhd p \iff
p+q=p.$$ Indeed given such a pair $(\mathbb P, \lhd )$, the abelian
monoid $M(\mathbb P,\lhd)$ defined by taking as a set of generators
$\mathbb P$ and with relations given by $p=p+q$ whenever $q\lhd p$,
is a primitive monoid, and the correspondences $M\mapsto (\mathbb P
(M),\lhd) $ and $(\mathbb P,\lhd)\mapsto M(\mathbb P,\lhd)$ provide
a bijection between isomorphism types of primitive monoids and
isomorphism types of pairs $(\mathbb P,\lhd)$, where $\mathbb P$ is
a set and $\lhd $ a transitive antisymmetric relation on $\mathbb
P$, see \cite[Proposition 3.5.2]{Pierce}.

Let $M$ be a primitive monoid and $p\in \mathbb P (M)$. Then $p$ is
said to be {\it free} in case $p\ntriangleleft p$. Otherwise $p$ is
{\it regular}, see \cite[Section 2]{APW}. In case all the primes of
$M$ are free, the relation $\lhd$ is completely determined by the
poset $(\mathbb P(M),\le )$, where $\le $ is the restriction to
$\mathbb P (M)$ of the algebraic order of $M$. Namely, we have
$q\lhd p$ if and only if $q<p$. In this way we obtain mutually
inverse (up to isomorphism) correspondences $M\mapsto (\mathbb
P(M),\le )$ and $(\mathbb P,\le )\mapsto M(\mathbb P)$ between
primitive monoids having all primes free and posets.

We can now describe the properties of the class of $K$-algebras
$Q_K(\mathbb P)$ that we associate with finite posets $\mathbb P$.
The main result of this paper is:

\begin{theor*}
Let $M$ be a finitely generated primitive monoid such that all
primes of $M$ are free and let $K$ be a field. Let $\mathbb P$ be
the finite set of primes of $M$, endowed with the restriction of the
algebraic order on $M$. Let $Q_K(\mathbb P)$ be the $K$-algebra
described explicitly in Definition \ref{defi: Q(M)}. Then
$Q_K(\mathbb P)$ is a von Neumann regular ring and the natural
monoid homomorphism
$$\psi \colon M\rightarrow \mon{Q_K(\mathbb P)}$$ is an isomorphism.
\end{theor*}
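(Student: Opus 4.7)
The plan is to establish three things in sequence: regularity of $Q_K(\mathbb P)$, surjectivity of $\psi$, and injectivity of $\psi$. A useful viewpoint is that $Q_K(\mathbb P)$ should be built inductively from simpler pieces that are indexed by the elements of $\mathbb P$, each locally looking like the algebraic Toeplitz algebra $Q_L(E_1)$ of the introduction. Concretely, I would try to present $Q_K(\mathbb P)$ as a universal localization of a ``path-like'' algebra associated with $\mathbb P$, since the extension from the introduction's example to a general finite poset suggests that the defining data (generators of the form $x_p, y_p$ with $y_p x_p = 1$, together with compatibility data glueing them along the order) admits exactly such a description.

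For regularity, I would adapt the argument that shows $Q_L(E_1)$ is von Neumann regular: exhibit, for each generator $a$ of $Q_K(\mathbb P)$, a quasi-inverse $b$ with $aba = a$. The generators that are not themselves invertible are the ``partial isometries'' $x_p, y_p$ attached to each $p$, and the relations $y_p x_p = 1$ together with the inversion of the polynomials in $\Sigma$ (mirroring the universal localization of Jacobson's algebra) should force the required quasi-inverses to exist. Alternatively, regularity can be inherited through the universal localization step once one knows regularity of the ``polynomial core'' and checks a Cramer-type rule, as in \cite{AB2}.

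To construct $\psi$ and prove surjectivity, I would attach to each prime $p \in \mathbb P$ an explicit idempotent $e_p \in Q_K(\mathbb P)$ (the analogue of the Toeplitz projection $1-ab$, localized at $p$). The poset relations then translate into isomorphisms of finitely generated projective modules: if $q < p$, the structure of the Toeplitz component at $p$ should provide an internal direct-sum decomposition $e_p Q_K(\mathbb P) \cong e_p Q_K(\mathbb P) \oplus e_q Q_K(\mathbb P)$, yielding $[e_p]=[e_p]+[e_q]$ in $\mon{Q_K(\mathbb P)}$. Since $Q_K(\mathbb P)$ is generated, as an algebra, by these Toeplitz pieces, every finitely generated projective is a direct summand of a finite direct sum of the $e_p Q_K(\mathbb P)$, giving surjectivity.

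The main obstacle will be injectivity of $\psi$: ruling out that $\mon{Q_K(\mathbb P)}$ satisfies further unwanted relations among the $[e_p]$. Here I would lean on the faithful representation on $V(\mathbb P)$ (Theorem \ref{theor:repres}). The space $V(\mathbb P)$ decomposes into components indexed by $\mathbb P$, each one a copy of $K[z]$ on which the local Toeplitz piece acts in the standard way. This should produce a family of ``partial rank'' or dimension functions $d_p \colon \mon{Q_K(\mathbb P)} \to \N \cup \{\infty\}$, whose joint behaviour exactly mirrors the primitive monoid structure of $M = M(\mathbb P)$: a class $[e_p]$ has infinite $d_p$-rank but finite $d_q$-rank for $q \ne p$, and the $q < p$ absorption relation is detected by the fact that adding $[e_q]$ does not change any $d_r$. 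Assembling these invariants gives a retraction $\mon{Q_K(\mathbb P)} \to M$ splitting $\psi$, which will complete the proof.
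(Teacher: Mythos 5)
Your proposal has genuine gaps in all three steps, and the missing idea is the paper's actual organizing principle: $Q_K(\mathbb P)$ is not analyzed directly but is reconstructed from the maximal chains of $\mathbb P$ by a finite sequence of pullbacks (Theorem \ref{pullbacktheorem}, Theorem \ref{firstcase}) and crowned pushouts (Proposition \ref{crownpushoutprop}, Theorem \ref{regpushout}), with the chain case reduced to the quiver algebras of Section \ref{sect:building}. Concretely: (a) For regularity, exhibiting quasi-inverses of the \emph{generators} proves nothing, since the set of von Neumann regular elements of a ring is not closed under sums; and the Cramer-rule argument of \cite{AB2} is not available because $Q_K(\mathbb P)$ is a quiver algebra only when every $p$ has at most one lower cover (Remark \ref{rem:precission}(2)) --- the whole point of the construction is the case where it is not. (b) For surjectivity, the observation that every finitely generated projective is a direct \emph{summand} of a finite direct sum of the $e(p)Q_K(\mathbb P)$ is automatic (it is a summand of $Q_K(\mathbb P)^n$) and does not yield that it is \emph{isomorphic} to such a direct sum; that is the actual content, and in the paper it comes from lifting idempotents through the pullback (proof of Theorem \ref{pullbacktheorem}) and a strong separativity argument (end of the proof of Theorem \ref{hereditarymixed}).

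(c) The injectivity step is the most serious gap. You do not construct the functions $d_p$, and the representation of Theorem \ref{theor:repres} does not hand them to you: the components $V(p)$ are infinite-dimensional $L$-spaces built from rational function fields, so there is no naive rank to read off, and one would still have to prove that $d_p$ descends to a well-defined additive map on equivalence classes of idempotent matrices. More importantly, something genuinely can go wrong here: a pullback of regular rings need \emph{not} have the pullback monoid of projectives --- that is exactly the content of condition (\ref{eq:pullback}) in Theorem \ref{pullbacktheorem}, which is a nontrivial $K_1$-surjectivity statement verified in the proof of Theorem \ref{firstcase} using the explicit localizations of $L[t]$ that appear as $K_1$ of the building blocks. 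Any correct proof must supply an input of this kind to rule out unwanted identifications among the $[e(p)]$; your retraction is asserted, not built. A workable repair is essentially to follow the paper: prove the theorem for the posets $\mathbb F(p)$ of Proposition \ref{posets} by induction on pullbacks, then pass to general $\mathbb P$ by the crowned pushout machinery of Section \ref{sect:pushout}.
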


This gives a positive solution to the realization problem for the
class of finitely generated primitive monoids with all prime
elements free. The connection of our work with the paper \cite{AB2}
is as follows. In \cite{AB2}, a von Neumann regular algebra $Q_K(E)$
has been attached to every quiver and every field $K$, in such a way
that there is an isomorphism $\mon{Q_K(E)}\cong M(E)$. Here $M(E)$
is a certain conical refinement monoid associated with the quiver
$E$, with generators and relations given explicitly from the
combinatorial structure of $E$. It has been shown in \cite{APW} that
a finitely generated primitive monoid $M$ is isomorphic to a graph
monoid $M(E)$ for some quiver $E$ if and only if every free prime of
$M$ has at most one free lower cover. (Here a lower cover of a prime
$p$ is a prime $q$ such that $q<p$ and $[q,p]=\{q,p\}$.) In
particular the primitive monoid $M$ given by $M=\langle p,a,b:
p=p+a=p+b \rangle$ cannot be realized by an algebra $Q_K(E)$
corresponding to a quiver $E$. The corresponding poset $\mathbb P$
is depicted below. Observe that the (free) prime $p$ has two (free)
lower covers, namely $a$ and $b$.

\begin{figure}[htb]
 \[
 {
 \xymatrix{
 & p \ar@{-}[ld]\ar@{-}[rd] & \\
 a & & b
 }
 }
 \]
\caption{The poset $\mathbb P (M)$ for the monoid $M=\langle
p,a,b\mid p=p+a=p+b \rangle $.} \label{Fig:Graphpabp}
\end{figure}
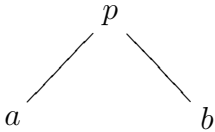

Consequently our construction significantly enlarges the class of
monoids known to be realizable. Moreover, the algebras $Q_K(\mathbb
P)$ have a functorial property with respect to certain morphisms
between posets (Proposition \ref{uniprop}). This is analogous to the
functoriality property of the regular algebra of a quiver (see
\cite[page 234]{AB2}). It is reasonable to expect that a suitable
combination of the methods developed in \cite{AB2} and the methods
developed in the present paper will lead to a general construction
of a von Neumann regular ring realizing every finitely generated
conical refinement monoid. In this direction, it is worth mentioning
that the construction of $Q_K(\mathbb P)$ in Definition \ref{defi:
Q(M)} gives an algebra $Q_L(E)$ associated with a suitable quiver
$E$, where $L=K(t_1,t_2,\dots ,)$, in case every element in $\mathbb
P$ has at most one lower cover. In particular, for the poset with
two elements $q,p$ with $q<p$, we get the Toeplitz algebra
$Q_L(E_1)$ mentioned at the beginning of the introduction. Certainly
the next step in the realization problem consists of realizing all
the finitely generated primitive monoids. A monoid $M$ in this
class, which is not covered by the results in \cite{AB2} or the
results in the present paper, must have both regular and free
primes, and some free prime of $M$ must have at least two free lower
covers. For instance, the monoid $$M=\langle q,p,a,b\mid q=2q=q+p\,
, \, p=p+a=p+b \rangle $$ satisfies these properties.

Although the main construction in the present paper shares some
resemblances with the one in \cite{AB2}---both objects have the form
$\mathcal A \Upsilon ^{-1}$, where $\mathcal A$ is the path algebra
of a quiver in \cite{AB2}, and $\mathcal A$ is an algebra described
by generators and relations coming from the structure of a poset in
the present paper (see Definition \ref{defi: Q(M)}), and $\Upsilon$
is a certain set of morphisms between finitely generated projective
$\mathcal A$-modules---a completely new set of techniques has been
developed to deal with our construction here. These techniques
include the fundamental study (in Section \ref{sect:pullbacks}) of
the conditions for the preservation of pullbacks under the functor
$\mathcal V (-)$, which indeed has dictated the form of the
relations used in \ref{defi: Q(M)} to define the algebra $\mathcal
A$.

We now summarize the contents of the paper. In Section
\ref{sect:preldefs} we review some basic definitions and results on
posets, monoids and rings. Section \ref{QQQ} contains the definition
of the $K$-algebra $Q_K(\mathbb P)$ associated to a finite poset
$\mathbb P$ and analyzes the functorial behaviour and algebraic
properties of this construction. Moreover, we give a Toeplitz-like
representation of this algebra.

Sections \ref{sect:pullbacks} and \ref{sect:pushout} contain
technical results needed to gain control on the relationship between
von Neumann regular rings $R$ and their monoids $\mon{R}$ under
natural categorical operations such as pullbacks and pushouts. These
results are of independent interest, and most likely will play a
role in future developments of the theory.

Section \ref{sect:building} develops a generalization of the
construction in \cite{AB2} for a particular class of quivers, which
will be used in the proof of our main result. This generalized
construction can be studied in a more general setting, see
\cite{AB3}, but we present here a direct approach to the results
which are needed in the present paper.

Finally, Section \ref{sect:theproof} contains the proof of our main
result, which is based upon a reconstruction technique of a finite
poset from the family of its maximal chains. The technical tools
developed in the previous sections enable us to mimic the mentioned
reconstruction at the ring level and at the monoid level.

\smallskip

\section{Preliminary definitions}
\label{sect:preldefs}

All rings in this paper will be associative and all monoids will be
abelian.  A (not necessarily unital) ring $R$ is {\it von Neumann
regular} if for every $a\in R$ there is $b\in R$ such that $a=aba$.
Our basic reference for the theory of von Neumann regular rings is
\cite{vnrr}.

For a (not necessarily unital) ring $R$, let $M_{\infty}(R)$ be the
directed union of $M_n(R)$ ($n\in\mathbb N$), where the transition
maps $M_n(R)\to
M_{n+1}(R)$ are given by $x\mapsto \left( \smallmatrix x&0\\
0&0\endsmallmatrix \right)$. Two idempotents $e,f\in M_{\infty}(R)$
are {\it equivalent} in case there are $x\in eM_{\infty}(R)f$ and
$y\in fM_{\infty}(R)e$ such that $xy=e$ and $yx=f$. We define
$\mon{R}$ to be the set of equivalence classes $\mon{e}$ of
idempotents $e$ in $M_\infty(R)$ with the operation
$$\mon{e}+\mon{f} :=
\mathcal V \bigl(  \left( \smallmatrix e&0\\ 0&f
\endsmallmatrix \right) \bigr)$$
for idempotents $e,f\in M_\infty(R)$. The classes $\mon{e}$ are
often denoted also by $[e]$. For unital $R$, the monoid $\mon{R}$ is
the monoid of isomorphism classes of finitely generated projective
left $R$-modules, where the operation is induced by direct sum. If
$I$ is an ideal of a unital ring $R$, then $\mon{I}$ can be
identified with the monoid of isomorphism classes of finitely
generated projective left $R$-modules $P$ such that $P=IP$.

If $R$ is an exchange ring (in particular, if $R$ is von Neumann
regular), then $\mon{R}$ is a conical refinement monoid, see
\cite[Corollary 1.3]{AGOP}. (This is true even in the non-unital
case by \cite[Proposition 1.5(b)]{Aext}.)

Let $M$ be a monoid. An {\it order-ideal} of~$M$ is a nonempty
subset $I$ of~$M$ such that $x+y\in I$ if{f} $x\in I$ and $y\in
I$, for all $x,y\in M$. In this case, the equivalence relation
$\equiv_I$ defined on~$M$ by the rule
 \[
 x\equiv_Iy\ \Longleftrightarrow\ (\exists u,v\in I)(x+u=y+v),\quad
 \text{for all }x,y\in M
 \]
is a monoid congruence of~$M$. We put $M/I=M/{\equiv_I}$ and we
 shall say that $M/I$ is an
\emph{ideal quotient} of~$M$. We denote by
 \[
 M\mid a = \{ x\in M: (\exists n\in\mathbb Z^+)(x\leq na) \}
 \]
the order-ideal generated by an element $a\in M$. Similarly $M\mid
S$ will denote the order-ideal of $M$ generated by a subset $S$ of
$M$.

When $M$ is a conical refinement monoid, the set $\mathcal L(M)$
of order-ideals of $M$ forms a complete distributive lattice, with
suprema and infima given by the sum and the intersection of
order-ideals respectively.

Let us denote by $\mathcal  L (R)$ the lattice of (two-sided)
ideals of $R$, and by $\mathcal L (M) $ the lattice of
order-ideals of $M$.

\begin{prop} {\rm (cf. \cite[Proposition 1.4]{AGOP})}
\label{prop:wellknownideals} If $R$ is von Neumann regular, then
there is a lattice isomorphism $\mathcal L (R)\to \mathcal L
(\mon{R})$, $I\mapsto \mon{I}$ from $\mathcal L (R)$ onto $\mathcal
L (\mon{R})$. Moreover $\mon{R/I}\cong \mon{R}/\mon{I}$ for any
ideal $I$ of $R$.
\end{prop}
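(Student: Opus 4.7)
The plan is to construct an explicit inverse to $I\mapsto \mon{I}$, verify compatibility with the lattice operations, and then analyze the quotient separately. The central ingredients are the facts that in a von Neumann regular ring every two-sided ideal is generated by its idempotents, idempotents lift modulo any ideal, and $\mon{R}$ is a refinement monoid.

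First I would check that $\mon{I}$ is genuinely an order-ideal of $\mon{R}$: closure under the monoid operation is immediate, and if $[e]+[f]=[g]$ with $g\in M_\infty(I)$ idempotent, then up to equivalence $e$ is a subidempotent of $g$, hence itself lies in $M_\infty(I)$ (since $I$ is two-sided), giving $[e]\in\mon{I}$ and similarly for $[f]$. For the reverse direction, given an order-ideal $S\subseteq\mon{R}$, define $J(S)$ to be the two-sided ideal of $R$ generated by $\{e\in R : e^2=e,\ [e]\in S\}$. To see $\mon{J(S)}=S$, one observes that for any idempotent $e\in M_\infty(J(S))$, the right ideal $eM_\infty(R)$ is contained in a finite sum $\sum f_iM_\infty(R)$ with $[f_i]\in S$; refinement in $\mon{R}$ combined with the order-ideal property of $S$ then forces $[e]\in S$. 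The identity $J(\mon{I})=I$ is direct from $I$ being generated by its idempotents. Since both correspondences are inclusion-preserving and suprema (resp.\ infima) in both lattices are sums (resp.\ intersections), the bijection is automatically a lattice isomorphism.

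For the quotient statement, consider the monoid homomorphism $\bar\pi\colon\mon{R}\to\mon{R/I}$ induced by the quotient map $\pi\colon R\to R/I$. Surjectivity of $\bar\pi$ follows from the standard fact that idempotents lift modulo any ideal in a von Neumann regular ring: given $\bar e\in M_\infty(R/I)$ idempotent, lift to some $x\in M_\infty(R)$, take a quasi-inverse for $x$ in the regular ring $M_\infty(R)$, and a polynomial manipulation produces an idempotent lifting $\bar e$. Clearly $\bar\pi$ vanishes on $\mon{I}$, hence factors through $\mon{R}/\mon{I}$. The main obstacle is injectivity of the induced map $\mon{R}/\mon{I}\to\mon{R/I}$: if $\pi(e)\sim\pi(f)$ via matrices $\bar x,\bar y$ with $\bar x\bar y=\bar e$ and $\bar y\bar x=\bar f$, one lifts to $x,y\in M_\infty(R)$ and finds $xy-e,\ yx-f\in M_\infty(I)$. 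Converting this $I$-valued discrepancy into an equality $[e]+u=[f]+v$ with $u,v\in\mon{I}$ is the technical heart of the argument: one decomposes $e$ and $f$ via idempotents associated with the discrepancies, applies refinement in $\mon{R}$ to realign the pieces, and extracts the $I$-supported summands that play the roles of $u$ and $v$.
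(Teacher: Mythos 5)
The paper itself offers no proof of this proposition --- it is quoted from \cite{AGOP} --- so there is no in-text argument to match yours against; what you have written is the standard proof from that source, and its architecture is correct. In particular: $\mon{I}$ is an order-ideal because an idempotent equivalent to a subidempotent of an element of $M_\infty(I)$ again lies in $M_\infty(I)$; the inverse map $S\mapsto J(S)$ works because every ideal of a regular ring is generated by its idempotents and because finitely generated submodules of projectives over regular rings are direct summands (for $\mon{J(S)}=S$ this summand fact alone suffices --- refinement is not actually needed there); and an inclusion-preserving bijection with inclusion-preserving inverse is automatically a lattice isomorphism, so the compatibility with sums and intersections comes for free.

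The one place where the sketch does not yet amount to a proof is the injectivity of $\mon{R}/\mon{I}\to\mon{R/I}$, and there the tool you invoke (refinement, ``realigning the pieces'') is not the mechanism that does the work. What is needed is the splitting of \cite[Proposition 2.19]{vnrr}, used verbatim elsewhere in this paper (see the proof of Proposition \ref{crownpushoutprop}): given $\pi(e)\sim\pi(f)$ witnessed by $\bar x\in\pi(e)M_\infty(R/I)\pi(f)$ and $\bar y\in\pi(f)M_\infty(R/I)\pi(e)$, lift arbitrarily and replace the lifts by $x:=e x f$ and $y:=f y e$; choose a quasi-inverse $w$ of $z:=xy$ in the regular corner $eM_\infty(R)e$ with $zwz=z$ and $wzw=w$. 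Then $e_1:=zw$ and $f_1:=ywx$ are idempotents with $e_1\le e$, $f_1\le f$, one checks $\pi(e_1)=\pi(e)$ and $\pi(f_1)=\pi(f)$ so that $e-e_1$ and $f-f_1$ are idempotents in $M_\infty(I)$, and $a:=e_1x$, $b:=yw$ satisfy $ab=e_1$, $ba=f_1$, so $e_1\sim f_1$. This gives $[e]+[f-f_1]=[f]+[e-e_1]$ directly, with no refinement step at all. With that substitution your proof is complete; as written, the last paragraph describes the shape of the desired conclusion rather than an argument that produces it.
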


Say that a subset $A$ of a poset $\mathbb P$ is a {\it lower
subset} in case $q\le p$ and $p\in A$ imply $q\in A$. Again the
set $\mathcal L (\mathbb P)$ of all lower subsets of $\mathbb P$
forms a complete distributive lattice, which is a sublattice of
the Boolean lattice ${\bf 2}^{\mathbb P}$.

If $M$ is a primitive monoid then the set of primes of $M$,
$\mathbb P (M)$, is a poset with the partial order $\le$ induced
from the algebraic order of $M$, and we easily get:

\begin{prop}
\label{wellknownlattice} For a primitive monoid $M$, there is a
lattice isomorphism
$$\mathcal L (M)\cong \mathcal L (\mathbb P (M)), \qquad S\mapsto
\mathbb P (S)=\mathbb P (M)\cap S.$$
\end{prop}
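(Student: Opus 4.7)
The plan is to exhibit an explicit inverse $\Phi\colon \mathcal L(\mathbb P(M))\to \mathcal L(M)$ to $S\mapsto \mathbb P(S)$, and then verify the bijection respects the lattice operations. The first small check is that $\mathbb P(S)$ is indeed a lower subset of $\mathbb P(M)$: if $p\in \mathbb P(S)$ and $q\le p$ in $\mathbb P(M)$, writing $p=q+r$ for some $r\in M$ and invoking the order-ideal definition forces $q\in S$, so $q\in \mathbb P(S)$.

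For the inverse, given a lower subset $A\subseteq \mathbb P(M)$, I would set
\[
\Phi(A):=\{p_1+\cdots+p_n : n\ge 0,\ p_i\in A\},
\]
with the empty sum giving $0$. The central point is that $\Phi(A)$ is an order-ideal. Closure under addition is immediate; conversely, suppose $x+y=p_1+\cdots+p_n$ with $p_i\in A$. Since $M$ is primely generated, write $x=\sum_j q_j$ and $y=\sum_k r_k$ as sums of primes. Each $q_j\le \sum_i p_i$, and the prime property yields $q_j\le p_i$ for some $i$; because $A$ is a lower subset of $\mathbb P(M)$, we get $q_j\in A$, and similarly each $r_k\in A$, so $x,y\in \Phi(A)$. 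That $\Phi$ and $S\mapsto \mathbb P(S)$ are mutually inverse then follows quickly: any prime in $\Phi(A)$ lies below some element of $A$ by the prime property and hence belongs to $A$, while every $x\in S$ decomposes into a sum of primes each of which lies in $S$ (again by the order-ideal property), hence in $\mathbb P(S)$.

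For the lattice structure, both maps preserve intersections trivially. For joins, the sup of $S,T\in \mathcal L(M)$ is $S+T$, and any prime $p\in \mathbb P(S+T)$ satisfies $p\le s+t$ with $s\in S$, $t\in T$; primality gives $p\le s$ or $p\le t$, hence $p\in \mathbb P(S)\cup \mathbb P(T)$, while the reverse inclusion is clear. The only substantive step in the whole argument is the verification that $\Phi(A)$ satisfies the ``summand-closed'' half of the order-ideal axiom, which is precisely where primely-generated, the prime property, and the lower-set hypothesis on $A$ are combined; everything else is routine bookkeeping, which is why the excerpt rightly calls it an easy consequence.
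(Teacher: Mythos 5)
Your proof is correct, and it is exactly the routine argument the paper has in mind when it states this proposition with ``we easily get'' and no proof: prime generation gives the inverse map $A\mapsto\Phi(A)$, the prime property plus the lower-set condition gives the summand-closure of $\Phi(A)$ and the compatibility with joins $S+T$, and the order-ideal axiom handles everything else. No gaps.
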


For an element $p$ of a poset $\mathbb P$, write
$$\rL(p)=\rL(\mathbb P ,p)=\{q\in \mathbb P : q<p \text{ and } [q,p]=\{q,p\}\}.$$
An element of $\rL (p)$ is called a {\it lower cover} of $p$.

For any prime element~$p$ in a refinement monoid~$M$, the map
 \[
 \phi_p\colon M\to\mathbb Z^{\infty},\quad x\mapsto\sup\famm{n\in\mathbb Z^+}{np\leq x}
 \]
is a monoid homomorphism from~$M$ to~$\mathbb Z^{\infty}:=\mathbb
Z ^+\cup \{\infty\}$, see \cite[Theorem~5.4]{Brook01}.
Furthermore, if~$M$ is primitive, then the map
 \begin{equation}\label{Eq:Defphi}
 \phi\colon M\to(\mathbb Z^{\infty})^{\mathbb P(M)},\quad x\mapsto\famm{\phi_p(x)}{p\in\mathbb P(M)}
 \end{equation}
is a monoid embedding as well as an $\lhd$-embedding, see
\cite[Theorem~5.11]{Brook01} or \cite[Corollary~6.14]{WDim}.

A monoid $M$ is said to be {\it separative} in case, whenever
$a,b\in M$ and $a+a=a+b=b+b$, then we have $a=b$. Similarly $M$ is
{\it strongly separative} in case $a+a=a+b$ implies $a=b$ for
$a,b\in M$. A ring $R$ is said to be {\it (strongly) separative} in
case $\mon{R}$ is (strongly) separative, see \cite{AGOP} for
background and various equivalent conditions. Every primely
generated refinement monoid is separative \cite[Theorem
4.5]{Brook01}. In particular every primitive monoid is separative.
Moreover, a primitive monoid $M$ is strongly separative if and only
if all the primes in $M$ are free, see \cite[Theorem 4.5, Corollary
5.9]{Brook01}. Thus, the class of monoids that we will realize in
this paper (as monoids of projectives over regular rings) coincides
exactly with the strongly separative primitive monoids.

\section{The algebras $Q_K(\mathbb P)$}
\label{QQQ}

Recall from the introduction that (finitely generated) primitive
monoids $M$ with all primes free are determined by the (finite)
posets $\mathbb P (M)$ of their prime elements. The construction
below has functorial properties with respect to some maps of posets,
so it is better thought of as a functor from finite posets to
$K$-algebras. However we will use both notations $Q_K(M)$ and
$Q_K(\mathbb P (M))$ interchangeably.

\begin{defi}
\label{defi: Q(M)} Let $M$ be a finitely generated primitive monoid
with all primes free, and let $\mathbb P$ be its finite poset of
primes. Fix a field $K$. For $p\in \mathbb P$ denote
$n_p:=|\rL(\mathbb P,p)|$.

Let $L=K(t_1,t_2,\dots ,)$ be an infinite purely transcendental
extension of $K$. Let $\mathcal A_0$ be the semisimple $L$-algebra
generated by a family of orthogonal idempotents $\{e(p): p\in
\mathbb P\}$ with sum $1$, and, for each $p\in \mathbb P$ with
$n_p>0$, a family of orthogonal idempotents $\{e(p,q):q\in \rL(p)\}$
such that $e(p)e(p,q)=e(p,q)=e(p,q)e(p)$.

For convenience, put $e'(p):=e(p)-\sum _{q\in \rL(p)}e(p,q)$, with
$e'(p)=e(p)$ in case $n_p=0$. Then we have
\begin{equation}
\label{eq:A.1} \mathcal A _0:=\prod _{p\in \mathbb P}e'(p)L\times
\prod_{p\in \mathbb P}\prod _{q\in \rL(p)}e(p,q)L
\end{equation}
and we also have an orthogonal decomposition
\begin{equation}
\label{eq:A.2} e(p)=e'(p)+\sum _{q\in \rL(p)}e(p,q)\qquad (p\in
\mathbb P).
\end{equation}

Let $\mathcal A _1$ be the $L$-algebra generated by $\mathcal A _0$
and a family $\{ \alpha _{p,q}: p\in \mathbb P , q\in \rL(p)\}$,
subject to the following relations:

\begin{equation}
\label{eq:A.3} \alpha _{p,q}e(p)=\alpha _{p,q}=(e(p)-e(p,q))\alpha
_{p,q} \qquad (p\in \mathbb P, q\in \rL(p)),
\end{equation}
\begin{equation}
\label{eq:A.4} \alpha _{p,q'}e(p,q)=e(p,q)\alpha _{p,q'}\qquad
\text{for } q\ne q'.
\end{equation}
\begin{equation}
\label{eq:A.5} \alpha _{p,q'}\alpha_{p,q}=\alpha _{p,q}\alpha
_{p,q'}\qquad  (q, q'\in \rL(p)),
\end{equation}

For each polynomial $f(x_{q})\in L[x_q:q\in \rL(p)]$ in commuting
variables $\{x_q:q\in \rL(p)\}$ and each $q'\in \rL(p)$, write
$v_{q'}(f)$ for the valuation of $f(x_q)$, seen as a polynomial in
the one-variable polynomial ring $(L[x_q:q\ne q'])[x_{q'}]$, at the
ideal generated by $x_{q'}$. In other words, $v_{q'}(f)$ is the
highest integer $n$ such that $x_{q'}^n$ divides $f$. Write
$$v(f)=\text{max}\{v_{q'}(f):q'\in \rL(p)\}.$$
Let $\Sigma(p)$ be the set of all elements of $e(p)\mathcal
A_1e(p)$ given by

\begin{equation}
\label{eq:A.6} \Sigma (p)=\{f(\alpha_{p,q}): v(f)=0\}.
\end{equation}

Set

\begin{equation}
\label{eq:A.7} \Sigma :=\bigcup _{p\in\mathbb P} \Sigma (p).
\end{equation}

 Let $\mathcal T (\mathbb P)$ be the quiver having as vertices the
elements of $\mathbb P$ and having one arrow from $p$ to $q$ if and
only if $q\in \rL(p)$. We assume our posets are endowed with
bijective maps $\{1,\dots, n_p\}\to s_{\mathcal T(\mathbb
P)}^{-1}(p)$ for every $p\in \mathbb P$ such that $n_p>0$. For
convenience, we will refer to the enhanced structure as a labelled
poset.

For $p\in \mathbb P$ with $n_p>0$, we denote by $(p,q_i)$ the image
of $i\in \{1,\dots ,n_p\}$ under the given map $\{1,\dots, n_p\}\to
s_{\mathcal T(\mathbb P)}^{-1}(p)$; we also denote by
$\sigma^p\colon L\to L$ the $K$-algebra  endomorphism determined by
the rule $\sigma^p (t_i)=t_{i+n_p-1} $, and by $$\sigma _j\colon
\{1,\dots ,n_p\}\longrightarrow \{1,\dots ,n_p-1\}$$  the surjective
non-decreasing map sending $j$ and $j+1$ to $j$, for $j<n_p$, and
with $\sigma _{n_p}:=\sigma _{n_p-1}$.

We consider the $K$-algebra $\mathcal A $ obtained by adjoining to
$\mathcal A_1$ a new family of generators $\{ \beta _{p,q}: p\in
\mathbb P , q\in \rL(p)\}$, subject to the relations:

\begin{equation}
\label{eq:A.8} \lambda \beta_{p,q}=\beta_{p,q}\sigma ^p(\lambda )
\qquad (\lambda \in L, q\in \rL(p)),
\end{equation}
\begin{equation}
\label{eq:A.9} \alpha _{p,q_{\ell}}\beta _{p,q_j}=\beta
_{p,q_j}t_{\sigma_j (\ell)},\qquad  (j,\ell\in \{1,\dots ,n_p\},
j\ne \ell)
\end{equation}
and
\begin{equation}
\label{eq:A.10} e(p,q)\beta _{p,q}=\beta_{p,q}=\beta
_{p,q}e(q)\qquad (q\in \rL(p)).
\end{equation}

 Next consider the set of homomorphisms of finitely generated
projective left $\mathcal A$-modules $\Sigma _1=\{\mu _{p,q}:p\in
\mathbb P,q\in \rL(p)\}$, where for $q\in \rL(p)$, the map
$\mu_{p,q}$ is defined as follows:

\begin{equation}
\label{eq:A.11} \mu _{p,q}\colon \mathcal A e(p)\longrightarrow
\mathcal A e(p)\oplus \mathcal A e(q), \quad \mu _{p,q}(r)=(r\alpha
_{p,q},r\beta_{p,q}).
\end{equation}

Finally define the $K$-algebra $Q_K(\mathbb P)=Q_K(M)$ associated
with $\mathbb P$ (or equivalently, with $M$) as

\begin{equation}
\label{eq:A.12} Q_K(\mathbb P)=Q_K(M)=\mathcal A(\Sigma \cup \Sigma
_1)^{-1},
\end{equation}

\noindent where the elements in $\Sigma (p)$ are seen as left
$\mathcal A$-module morphisms $\mathcal Ae(p)\to \mathcal Ae(p)$.
The algebra $Q_K(\mathbb P)$ is a unital $K$-algebra, with unit
$1=\sum _{p\in \mathbb P}e(p)$.\qed
\end{defi}

Let us write explicitly what is the meaning of making the maps in
$\Sigma _1$ invertible in terms of generators and relations. It
amounts to add to $\mathcal A \Sigma ^{-1}$ a family of generators
$\ol{\beta }_{p,q},\ol{\alpha}_{p,q}$ for $q\in \rL(p), p\in\mathbb
P$ with the following relations:

\begin{equation}
\label{eq:A.13} e(p)\ol{\alpha}_{p,q}
=\ol{\alpha}_{p,q}=\ol{\alpha}_{p,q}(e(p)-e(p,q)),\,\,\quad
\ol{\alpha}_{p,q}\alpha_{p,q}  =e(p), \,\,\quad \alpha
_{p,q}\ol{\alpha}_{p,q} =e(p)-e(p,q),
\end{equation}
\begin{equation}
\label{eq:A.14} e(q)\ol{\beta}_{p,q}=\ol{\beta
}_{p,q}=\ol{\beta}_{p,q}e(p,q),\qquad \ol{\beta
}_{p,q}\beta_{p,q}=e(q), \qquad \beta_{p,q}\ol{\beta}_{p,q} =e(p,q).
\end{equation}

\medskip

It is a simple matter to check that the relations above imply the
following ones:

\begin{equation}
\label{eq:A.15} e(p,q)\ol{\alpha} _{p,q'}=\ol{\alpha}
_{p,q'}e(p,q)\quad \text{for } q\ne q';\qquad \ol{\alpha}
_{p,q'}\ol{\alpha}_{p,q}=\ol{\alpha} _{p,q}\ol{\alpha} _{p,q'}\quad
(q, q'\in \rL(p)),
\end{equation}
\begin{equation}
\label{eq:A.16} \ol{\beta }_{p,q}\lambda
=\sigma^p(\lambda)\ol{\beta}_{p,q} \qquad (\lambda \in L, q\in
\rL(p)).
\end{equation}
\begin{equation}
\label{eq:A.17} \ol{\beta}_{p,q_j}\alpha _{p,q_{\ell}}=t_{\sigma_j
(\ell)}\ol{\beta}_{p,q_j}, \qquad (j,\ell\in \{1,\dots ,n_p\}, j\ne
\ell)
\end{equation}
\begin{equation}
\label{eq:A.18} t_{\sigma_j
(\ell)}^{-1}\ol{\beta}_{p,q_j}=\ol{\beta}_{p,q_j}\ol{\alpha}_{p,q_{\ell}},
\qquad (j,\ell\in \{1,\dots ,n_p\}, j\ne \ell)
\end{equation}

\begin{remark}
Observe that the above relations (\ref{eq:A.13}), (\ref{eq:A.14}),
(\ref{eq:A.15}), (\ref{eq:A.16}), (\ref{eq:A.18}) give that the
$K$-algebra $\mathcal A\Sigma _1^{-1}$ admits a natural involution
$x\mapsto \ol{x}$ which is the identity on $\prod _{p\in \mathbb
P}e'(p)K\times \prod_{p\in \mathbb P}\prod _{q\in \rL(p)}e(p,q)K$,
and sends $t_k$ to $t_k^{-1}$, $\alpha _{p,q}$ to
$\ol{\alpha}_{p,q}$, and $\beta _{p,q}$ to $\ol{\beta}_{p,q}$. The
algebra $\mathcal A\Sigma_1^{-1} $ is analogous to the Leavitt path
algebra of \cite{AA1} (see also \cite{AMP}).
\end{remark}

Since $Q_K(M)e(p)\cong Q_K(M)e(p)\oplus Q_K(M)e(q)$ for every $q\in
\rL(p)$, there is a unique monoid homomorphism $\psi \colon
M\rightarrow \mon{Q_K(M)}$ such that $\psi (p)=[e(p)]$.
 Our main result can now be stated as follows:

\begin{theor}
\label{main} Let $M$ be a finitely generated primitive monoid such
that all primes of $M$ are free and let $K$ be a field. Then
$Q_K(M)=Q_K(\mathbb P(M))$ is a von Neumann regular ring and the
natural monoid homomorphism
$$\psi \colon M\rightarrow \mon{Q_K(M)}$$ is an isomorphism.
\end{theor}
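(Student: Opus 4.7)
The plan is to reduce the theorem to the case in which $\mathbb{P}$ is a chain, and then assemble the general case by reconstructing $\mathbb{P}$ from its maximal chains through iterated pushouts. First I would treat the chain case: when $\mathbb{P}$ is totally ordered, every prime has at most one lower cover, so the extra generators $\beta_{p,q}$ and the relations \eqref{eq:A.8}--\eqref{eq:A.10} either collapse or simplify drastically. The introduction already observes that in this situation $Q_K(\mathbb{P})$ coincides with an algebra $Q_L(E)$ attached to a suitable quiver $E$, and the generalized version of the construction of \cite{AB2} developed in Section \ref{sect:building} then yields both the von Neumann regularity of $Q_K(\mathbb{P})$ and the fact that the natural map $\psi$ is an isomorphism.

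Next I would enumerate the maximal chains of $\mathbb{P}$ as $C_1,\dots,C_r$ and define $\mathbb{P}_i := C_1\cup\cdots\cup C_i$, so that each $\mathbb{P}_i$ is obtained from $\mathbb{P}_{i-1}$ by gluing in the chain $C_i$ along the lower sub-poset $\mathbb{P}_{i-1}\cap C_i$. This exhibits $\mathbb{P}$ as an iterated pushout of chains over common lower sub-posets in the category of labelled finite posets. The analogous decomposition at the monoid level---namely that $M(\mathbb{P}_i)$ is the pushout of $M(\mathbb{P}_{i-1})$ and $M(C_i)$ over $M(\mathbb{P}_{i-1}\cap C_i)$ in the category of primitive monoids---follows from Proposition \ref{wellknownlattice} and the explicit presentation of $M(\mathbb{P})$ by generators $\mathbb{P}$ and relations $p=p+q$ for $q<p$.

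The heart of the argument is then to show that $Q_K(-)$ converts each poset pushout into a pullback (equivalently a pushout with surjective transition maps) of $K$-algebras, and that $\mathcal{V}(-)$ preserves this diagram. Concretely, for every lower sub-poset $\mathbb{Q}\subseteq\mathbb{P}$ the assignment $e(p)\mapsto 0$ for $p\notin\mathbb{Q}$ should extend to a surjective $K$-algebra map $Q_K(\mathbb{P})\to Q_K(\mathbb{Q})$ whose kernel, by Proposition \ref{prop:wellknownideals}, is matched under $\mathcal{V}(-)$ with the order-ideal of $M(\mathbb{P})$ generated by $\mathbb{P}\setminus\mathbb{Q}$. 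Combining this with the preservation theorems of Sections \ref{sect:pullbacks} and \ref{sect:pushout}, the isomorphism $\psi$ propagates inductively from each chain $C_i$ to $\mathbb{P}_i$, and von Neumann regularity transfers along the way under the same hypotheses.

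The principal obstacle will be verifying that the universal localization $\mathcal{A}(\Sigma\cup\Sigma_1)^{-1}$ is genuinely the correct ring-theoretic pushout---that is, that its restriction to any lower sub-poset has the expected form and that the resulting pullback square of regular $K$-algebras satisfies the hypotheses identified in Sections \ref{sect:pullbacks} and \ref{sect:pushout} which guarantee preservation by $\mathcal{V}(-)$. This is exactly where the generic parameters $t_1,t_2,\dots$ and the twisted commutation rules \eqref{eq:A.8}--\eqref{eq:A.10} earn their place in Definition \ref{defi: Q(M)}: they provide enough \emph{generic freedom} inside $L=K(t_1,t_2,\dots)$ for the ring-theoretic gluings to reproduce the combinatorial pushouts of posets exactly, as dictated by the monoid-level analysis. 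Once this alignment is established, the theorem follows by induction on $r$.
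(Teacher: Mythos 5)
Your reduction to the chain case is sound (this is what Section \ref{sect:building} together with Proposition \ref{comparisonQMr} accomplishes), and the guiding idea of reassembling $\mathbb P$ from its maximal chains is indeed the paper's. But the gluing scheme you propose does not work, for two concrete reasons. First, the intersection $\mathbb P_{i-1}\cap C_i$ of a maximal chain with a union of earlier chains is in general \emph{not} a lower subset of either piece: for the diamond $q<a<p$, $q<b<p$ the two maximal chains meet in $\{q,p\}$. So ``$M(\mathbb P_i)$ is the pushout of $M(\mathbb P_{i-1})$ and $M(C_i)$ over $M(\mathbb P_{i-1}\cap C_i)$'' has no meaning within the machinery of Sections \ref{sect:pullbacks} and \ref{sect:pushout}, which handles only two very specific operations: pullbacks over a common \emph{quotient} (gluing chains along a common \emph{upper} segment, Proposition \ref{pullfgr} and Theorem \ref{pullbacktheorem}) and crowned pushouts, i.e.\ coequalizers identifying two disjoint isomorphic \emph{order-ideals} (Lemma \ref{structVcrownpushout}, Proposition \ref{crownpushoutprop}). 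The correct decomposition first \emph{unfolds} each $\mathbb P\dnw p$ into a poset $\mathbb F(p)$ in which shared lower elements are duplicated so that every $[t,p]$ is a chain (Proposition \ref{posets}); the chains are then glued by pullbacks along their common upper parts, and only afterwards are the duplicated lower copies re-identified by crowned pushouts. Your single ``pushout over a common lower sub-poset'' conflates these two different operations and omits the duplication step entirely; without it the diamond already fails.

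Second, the surjections $Q_K(\mathbb P)\to Q_K(\mathbb Q)$, $e(p)\mapsto 0$ for $p\notin\mathbb Q$, that your argument relies on do not exist for a lower subset $\mathbb Q$. Inverting $\mu_{p,q}$ forces $Q_K(\mathbb P)e(p)\cong Q_K(\mathbb P)e(p)\oplus Q_K(\mathbb P)e(q)$, so killing $e(p)$ kills $e(q)$ for every $q<p$; in the diamond, killing $e(a),e(b),e(p)$ annihilates $e(q)$ and the proposed target $Q_K(\{q\})=L$ is unreachable. What is true is the opposite-variance statement of Theorem \ref{theor:repres}: a lower subset $A$ gives a \emph{corner} $Q_K(A)\cong e(A)Q_K(\mathbb P)e(A)$, while Remark \ref{remark:quotients} warns explicitly that $Q_K(\mathbb P)/I(A)\not\cong Q_K(\mathbb P\setminus A)$ in general. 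Finally, even where pullbacks of regular rings are available, $\mathcal V(-)$ preserves them only under the $K_1$-surjectivity condition (\ref{eq:pullback}) of Theorem \ref{pullbacktheorem}; verifying that condition is precisely where the transcendental parameters $t_i$ and relations (\ref{eq:A.8})--(\ref{eq:A.9}) are used (as in Example \ref{exam:pullbackchecked} and the displays (\ref{eq:6.16})--(\ref{eq:6.20})), and your proposal acknowledges this only as ``generic freedom'' without setting up the diagrams to which the criterion applies.
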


This will be proven in Section \ref{sect:theproof}.

\begin{remark}
\label{rem:precission}
\begin{enumerate}
\item We have considered the structure of a labelled poset in order
to define relation (\ref{eq:A.9}), so that strictly speaking we have
defined an algebra $Q_K(\mathbb P)$ for each {\it labelled poset}
$\mathbb P$. In order to consider the assignment $\mathbb P\mapsto
Q_K(\mathbb P)$ as a functor, this ingredient is needed. However the
properties of $Q_K(\mathbb P)$ do not depend on the chosen maps.
\item In case ~$\mathbb P$ is a \emph{forest}, i.e. $\mathbb P\dnw p:=\setm{q\in \mathbb  P}
{q\leq p}$ is a chain for any~$p\in \mathbb P$, we get that
$Q_K(\mathbb P)$ coincides with $Q_L(E)$, the regular algebra of $E$
over the field $L$, see Section \ref{sect:building} and \cite{AB2},
where $E$ is the quiver obtained by adding to $\mathcal T (\mathbb
P)$ a loop at each vertex $p\in \mathbb P$ such that $n_p>0$. For
instance the quiver $E$ corresponding to the forest $\mathbb
P=\{q,p\}$, where $q<p$, is the quiver $E_1$ of Figure
\ref{Fig:QuiverS1}. The Leavitt path algebra of $E$ over the field
$L$, see \cite{AA1}, is precisely the algebra $\mathcal A\Sigma_1
^{-1}$ in the notation of Definition \ref{defi: Q(M)}.
\end{enumerate}
\end{remark}

\begin{exem}
\label{exam:main} We consider our motivating example $M=\langle
p,a,b \mid p=p+a=p+b\rangle$, see Figure \ref{Fig:Graphpabp}.
Observe that $\mathbb P: =\mathbb P (M)$ is the poset $\{ p,a,b \}$
with $a<p$ and $b<p$, so that $n_p=2$ and $n_a=n_b=0$, so that
$\mathbb P$ is {\it not} a forest, and the monoid $M$ is {\it not} a
graph monoid (\cite[Theorem 5.1]{APW}). We choose the map
$\{1,2\}\to \rL(p)=\{a,b\}$ given by $1\mapsto a$ and $2\mapsto b$.
The regular algebra $Q_K(\mathbb P)$ associated with the poset
$\mathbb P$ can be described in terms of the regular algebras
$Q_1:=Q_K(S_1)$ and $Q_2:=Q_K(S_2)$ associated with the forests
$S_1$ and $S_2$, where $S_1$ and $S_2$ are the chains $\{a,p\}$ and
$\{b,p\}$ respectively. Since this is the main point of the whole
construction, we are going to provide some hints on the general
strategy to prove Theorem \ref{main} in this particularly simple
case.

Note that, by Remark \ref{rem:precission}(2), the algebras $Q_1$ and
$Q_2$ are isomorphic to the regular $L$-algebra $Q_L(E_1)$ attached
to the quiver $E_1$ of Figure \ref{Fig:QuiverS1}. Note that $Q_1$
has unit element $1_{Q_1}=e(p)+e(a)$, and has generators $\alpha
_1,\ol{\alpha}_1,\beta _1,\ol{\beta}_1$, with
$$\ol{\alpha}_1\alpha_1=e(p)=\alpha _1\ol{\alpha}_1+\beta
_1\ol{\beta }_1$$ and $\ol{\beta}_1\beta_1=e(a)$. Moreover, all
elements of the form $e(p)f(\alpha_1)$, where $f\in L[z]$ is a
polynomial with nonzero constant term, are in $\Sigma (p)$ and thus
the elements $e(p)f(\alpha_1)$ are invertible in $e(p)Q_1e(p)$. Let
$I_1$ be the ideal of $Q_1$ generated by $e(a)$. It follows from the
above that $Q_1/I_1\cong L(z)$, the rational function field in one
variable $z$, where $z$ corresponds to the class of $\alpha _1$ in
$Q_1/I_1$. Since $L(z)\cong L$, we may consider a surjective
homomorphism  $\pi _1\colon Q_1\to L$ such that $\pi
_1(\alpha_1)=t_1$, and $\pi _1(t_i)=t_{i+1}$ for all $i$. Note that
the kernel of $\pi _1$ is $I_1$. Similarly, we have a surjective
homomorphism $\pi _2\colon Q_2\to L$, with kernel $I_2:=Q_2e(b)Q_2$
sending $\alpha _2$ to $t_2$, $t_1$ to $t_1$ and $t_i$ to $t_{i+1}$
for all $i>1$, where $\alpha_2,\ol{\alpha}_2,\beta _2,\ol{\beta}_2 $
are canonical generators for $Q_2$. It is a simple exercise to show
that there is a pullback diagram:
\begin{equation}
\label{diagram-example}
\begin{CD}
Q_K(\mathbb P) @>\rho _1>> Q_1\\
   @V\rho_2 VV  @VV\pi _1V\\
Q_2 @>\pi_2>> L  ,
\end{CD}
\end{equation}
for suitably defined $K$-algebra homomorphisms $\rho _i\colon
Q_K(\mathbb P)\to Q_i$. (The key here is {\it to define}
$\rho_1(\alpha_{p,b})=e(p)t_1$, and $\rho _1(e(p)t_i)=e(p)t_{i+1}$,
$\rho _1(e(a)t_i)=e(a)t_i$ for all $i$, and similarly with $\rho
_2$, so that the defining relations of $Q_K(\mathbb P)$ are
preserved by $\rho _i$, and $\pi _1\circ \rho_1 =\pi _2\circ \rho
_2$.)

The results in Section \ref{sect:pullbacks} guarantee that the
functor $\mathcal V$ sends the pullback diagram
(\ref{diagram-example}) to a pullback diagram of monoids (see
Example \ref{exam:pullbackchecked}), and then the results in
\cite{AB2} applied to the quiver algebras $Q_i$ enable us to
conclude that $\mon{Q_K(\mathbb P)}\cong M$, as desired.\qed
\end{exem}

The universal property of the construction is as follows. Call a
morphism of posets $f\colon \mathbb P\to \mathbb P '$ a {\it
complete homomorphism} in case it is injective and has the property
that for any $p\in \mathbb P$ such that $\rL(\mathbb P,p)\ne
\emptyset$ we have that $f$ restricts to a bijection from
$\rL(\mathbb P,p)$ onto $\rL(\mathbb P', f(p))$. Observe that this
is the same as a complete graph homomorphism $\mathcal T(\mathbb
P)\to \mathcal T(\mathbb P ')$ between the quivers introduced in
Definition \ref{defi: Q(M)}, in the sense of \cite[Section 3]{AMP}.
If $\mathbb P$ and $\mathbb P'$ are labelled posets, we define a
{\it complete homomorphism} as a complete graph homomorphism
$f\colon \mathcal T(\mathbb P)\to \mathcal T(\mathbb P ')$
respecting the labels.

\medskip

The {\it category of non-unital $K$-algebras} has as objects all not
necessarily unital $K$-algebras, and as morphisms all the
homomorphisms of $K$-algebras. In the next proposition, note that
the algebras $Q_K(\mathbb P)$ are unital, but the homomorphisms
appearing there are not unital except in the trivial cases.

\begin{prop}
\label{uniprop} The map $\mathbb P\mapsto Q_K(\mathbb P)$ extends to
a functor from the category of finite labelled posets and complete
homomorphisms to the category of non-unital $K$-algebras.
\end{prop}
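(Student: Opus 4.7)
The plan is to define $Q_K(\phi)$ on generators, verify the defining relations, extend across the universal localization by its universal property, and conclude functoriality by uniqueness. Let $\phi\colon \mathbb P\to \mathbb P'$ be a complete homomorphism of labelled posets. I would first prescribe, on generators of $\mathcal A(\mathbb P)$, the assignments $e(p)\mapsto e(\phi(p))$, $e(p,q)\mapsto e(\phi(p),\phi(q))$, $\alpha_{p,q}\mapsto \alpha_{\phi(p),\phi(q)}$, $\beta_{p,q}\mapsto \beta_{\phi(p),\phi(q)}$, together with the identity on the scalar subfield $L$. Since $\phi$ preserves labels, $n_p=n_{\phi(p)}$ and $\phi$ sends the $i$-th lower cover of $p$ to the $i$-th lower cover of $\phi(p)$, so the ring endomorphism $\sigma^p$ of $L$ and the combinatorial maps $\sigma_j$ coincide with the corresponding data attached to $\phi(p)$ in $\mathbb P'$.

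Next I would check that each defining relation of $\mathcal A(\mathbb P)$, namely (\ref{eq:A.3})--(\ref{eq:A.5}) and (\ref{eq:A.8})--(\ref{eq:A.10}), is sent to a valid relation in $\mathcal A(\mathbb P')\subseteq Q_K(\mathbb P')$. Injectivity of $\phi$ preserves the orthogonality of the idempotent systems $\{e(p)\}$ and $\{e(p,q)\}_{q\in \rL(p)}$, and hence the decomposition (\ref{eq:A.1})--(\ref{eq:A.2}) is respected. Completeness of $\phi$ gives $\phi(\rL(p))=\rL(\phi(p))$, which makes relations (\ref{eq:A.3})--(\ref{eq:A.5}) carry over, while label preservation is exactly what is needed for the scalar twist relation (\ref{eq:A.8}) and the mixed relation (\ref{eq:A.9}) to survive (relation (\ref{eq:A.10}) is immediate). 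By the universal property of $\mathcal A(\mathbb P)$ as a $K$-algebra with generators and relations, this determines a (non-unital) $K$-algebra homomorphism $\widetilde{Q_K(\phi)}\colon \mathcal A(\mathbb P)\to Q_K(\mathbb P')$.

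Then I would extend $\widetilde{Q_K(\phi)}$ across the universal localization $\mathcal A(\mathbb P)(\Sigma\cup\Sigma_1)^{-1}=Q_K(\mathbb P)$. For each $p\in \mathbb P$, an element $f(\alpha_{p,q})\in \Sigma(p)$ of the form described in (\ref{eq:A.6}) is sent to $f(\alpha_{\phi(p),\phi(q)})\in \Sigma(\phi(p))$, which is invertible in the corner $e(\phi(p))Q_K(\mathbb P')e(\phi(p))$ by the very definition of $Q_K(\mathbb P')$. Similarly, each module map $\mu_{p,q}\in \Sigma_1$ is sent to $\mu_{\phi(p),\phi(q)}$, invertible in $Q_K(\mathbb P')$ via $\bar\alpha_{\phi(p),\phi(q)}$ and $\bar\beta_{\phi(p),\phi(q)}$ as in (\ref{eq:A.13})--(\ref{eq:A.14}). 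The universal property of universal localization (see \cite{scho}) then provides a unique (non-unital) $K$-algebra homomorphism $Q_K(\phi)\colon Q_K(\mathbb P)\to Q_K(\mathbb P')$ extending $\widetilde{Q_K(\phi)}$; note it is genuinely non-unital unless $\phi$ is surjective, since $1_{Q_K(\mathbb P)}=\sum_{p\in\mathbb P}e(p)$ maps to the idempotent $\sum_{p\in \mathbb P}e(\phi(p))$.

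Functoriality now follows formally from the uniqueness clauses. The identity on $\mathbb P$ gives the identity on every generator of $\mathcal A(\mathbb P)$, so it extends uniquely to the identity on $Q_K(\mathbb P)$; and for a composition $\psi\circ \phi$ of complete homomorphisms of labelled posets, both $Q_K(\psi\circ \phi)$ and $Q_K(\psi)\circ Q_K(\phi)$ agree on the generators of $\mathcal A(\mathbb P)$ and on the inverses adjoined in the localization, hence coincide. The only real subtlety is the preservation of relations (\ref{eq:A.8}) and (\ref{eq:A.9}): this depends crucially on $n_p=n_{\phi(p)}$ and on label-preserving compatibility of the enumerations of lower covers, which is precisely the content of the labelled-poset/complete-morphism hypothesis.
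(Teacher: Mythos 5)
Your proof is correct and is exactly the argument the paper intends: the paper's own proof is the single word ``Straightforward'' plus the observation about the image of the unit, and your write-up simply supplies the routine verification (generators, relations, $\Sigma\cup\Sigma_1$-inverting property, uniqueness from the universal properties) that the author leaves implicit. The only cosmetic point is that $n_p=n_{\phi(p)}$ is guaranteed only when $n_p>0$, but that is the only case in which it is used, so nothing is affected.
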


\begin{proof}
Straightforward. Observe that, for a complete homomorphism
$f\colon \mathbb P \to \mathbb P'$, the image of $1_{Q_K(\mathbb
P)}$ is the idempotent $\sum_{p\in\mathbb P} e(f(p))$.
\end{proof}

\begin{remark}
\label{dirlimits} Using the functoriality established in Proposition
\ref{uniprop}, one can generalize Theorem \ref{main} to some
infinite posets by taking direct limits, see for instance the proofs
of \cite[Theorem 3.5]{AMP} and \cite[Theorem 4.4]{AB2}. Note that
the class of (labelled) posets that can be obtained as a direct
limit of a directed system of finite labelled posets, with complete
homomorphisms as transition maps, includes the class of lower finite
posets, where a poset~$\mathbb P$ is \emph{lower finite} in case the
subset $\mathbb P\dnw p=\setm{q\in \mathbb P}{q\leq p}$ is finite,
for any~$p\in P$. A characterization of graph monoids among
primitive monoids $M$ such that $\mathbb P (M)$ is lower finite has
been obtained in \cite[Theorem 5.1]{APW}.
\end{remark}

We start analyzing the basic algebraic properties of $Q_K(\mathbb
P)$. We begin with a general observation, stating that universal
localization commutes with adjoining generators and relations to a
given algebra. For a field $K$, the coproduct of two unital
$K$-algebras $\mathcal A$ and $\mathcal B$ will be denoted by
$\mathcal A
*_K\mathcal B$; see \cite{BergCo} for the definition and properties of such
coproducts.

\begin{prop}
\label{commadjoin} Let $\mathcal A$ be a unital $K$-algebra over a
field $K$, and let $\Sigma$ be a set of morphisms between finitely
generated projective $\mathcal A$-modules. Let $\mathcal B$ be
another unital $K$-algebra, and let $I$ be an ideal of the coproduct
$\mathcal A *_K\mathcal B$. Set $\,\, \mathcal C=(\mathcal
A*_K\mathcal B)/I$. Let $\overline{I}$ be the ideal of $(\mathcal
A\Sigma ^{-1})*_K\mathcal B$ generated by the image of $I$ under the
canonical map $\mathcal A*_K \mathcal B\to (\mathcal A\Sigma
^{-1})*_K\mathcal B$, and let $\overline{\Sigma} $ be the image of
$\Sigma$ under the natural map $\mathcal A\to \mathcal C$ (obtained
by composing the natural map $\mathcal A\to \mathcal A*_K \mathcal
B$ with the canonical projection $\mathcal A*_K \mathcal B\to
\mathcal C$). It follows that there is a natural isomorphism
$\mathcal C\overline{\Sigma}^{-1}\cong (\mathcal A\Sigma
^{-1}*_K\mathcal B)/\ol{I}$.
\end{prop}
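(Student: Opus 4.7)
The plan is to verify that both sides represent the same functor on the category of unital $K$-algebras, and then invoke a Yoneda-style argument; equivalently, to construct mutually inverse $K$-algebra homomorphisms using the universal properties of coproducts, universal localizations, and ideal quotients.

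First I would build the forward map $\Phi\colon \mathcal{C}\overline{\Sigma}^{-1}\to(\mathcal{A}\Sigma^{-1}*_K\mathcal{B})/\overline{I}$. The composition
$$\mathcal{A}*_K\mathcal{B}\longrightarrow \mathcal{A}\Sigma^{-1}*_K\mathcal{B}\longrightarrow (\mathcal{A}\Sigma^{-1}*_K\mathcal{B})/\overline{I}$$
kills $I$ by construction of $\overline{I}$, so it factors through $\mathcal{C}$. Moreover, the images of the elements of $\Sigma$ are invertible in $\mathcal{A}\Sigma^{-1}$, hence remain invertible as morphisms between the base-changed finitely generated projective modules over $\mathcal{A}\Sigma^{-1}*_K\mathcal{B}$ and hence over the quotient. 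The images of $\overline{\Sigma}$ therefore become invertible in the target, and the universal property of $\mathcal{C}\overline{\Sigma}^{-1}$ yields the desired map $\Phi$.

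Next I would build the reverse map $\Psi$ in three stages. Via the coproduct, a homomorphism $\mathcal{A}\Sigma^{-1}*_K\mathcal{B}\to \mathcal{C}\overline{\Sigma}^{-1}$ is specified by a pair of $K$-algebra homomorphisms. For $\mathcal{B}$ I take the composite $\mathcal{B}\to \mathcal{A}*_K\mathcal{B}\to \mathcal{C}\to \mathcal{C}\overline{\Sigma}^{-1}$. For $\mathcal{A}$ I take the analogous composite $\mathcal{A}\to \mathcal{C}\overline{\Sigma}^{-1}$; by construction this sends $\Sigma$ to $\overline{\Sigma}$, whose elements are invertible morphisms between finitely generated projective modules in the target, so the universal property of $\mathcal{A}\Sigma^{-1}$ produces a factorization $\mathcal{A}\Sigma^{-1}\to \mathcal{C}\overline{\Sigma}^{-1}$. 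Combining these yields a homomorphism $\mathcal{A}\Sigma^{-1}*_K\mathcal{B}\to \mathcal{C}\overline{\Sigma}^{-1}$; the induced composition $\mathcal{A}*_K\mathcal{B}\to \mathcal{C}\overline{\Sigma}^{-1}$ coincides with the canonical one and hence kills $I$, so the full map kills the ideal $\overline{I}$ generated by the image of $I$, descending to the required $\Psi\colon (\mathcal{A}\Sigma^{-1}*_K\mathcal{B})/\overline{I}\to \mathcal{C}\overline{\Sigma}^{-1}$.

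Finally, to check $\Phi\Psi=\mathrm{id}$ and $\Psi\Phi=\mathrm{id}$ I would use the uniqueness clauses of the four universal properties involved: $\Psi\Phi$ is a self-map of $\mathcal{C}\overline{\Sigma}^{-1}$ whose restriction to $\mathcal{C}$ is the identity, and by uniqueness of the $\overline{\Sigma}^{-1}$-extension it must be the identity; symmetrically, $\Phi\Psi$ restricts to the identity on generators coming from $\mathcal{A}$, from $\mathcal{B}$ and from the inverses of $\Sigma$, so it equals the identity by the universal property of the coproduct combined with those of $(-)\Sigma^{-1}$ and the quotient. The only genuinely delicate point to be careful about is the preservation of the invertibility hypothesis for $\Sigma$ under base change through the coproduct and the quotient in the first stage; this is not a real obstacle since inverses of morphisms of finitely generated projective modules are preserved by any ring homomorphism, but it is what makes the universal properties applicable, and it is where the argument must be spelled out.
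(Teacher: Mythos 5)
Your argument is correct. The underlying idea is the same as the paper's --- everything in sight is defined by a universal property, so one only has to chase those properties --- but you organize the verification differently. The paper factors the isomorphism through the intermediate algebra $(\mathcal A*_K\mathcal B)\widetilde{\Sigma}^{-1}$, where $\widetilde{\Sigma}$ is the image of $\Sigma$ in the coproduct: it first identifies $\mathcal A\Sigma^{-1}*_K\mathcal B$ with $(\mathcal A*_K\mathcal B)\widetilde{\Sigma}^{-1}$ by a universal-property argument, and then invokes a result of Dubrovin--P\v{r}\'{\i}hoda--Puninski (``universal localization commutes with quotients by ideals'') to pass to the quotient by $I$. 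You instead build the two maps $\Phi$ and $\Psi$ directly between the two algebras in the statement and check they are mutually inverse using the uniqueness clauses; this makes the proof self-contained, at the cost of a slightly longer diagram chase that the citation would have absorbed. You also correctly isolate the one point that needs care, namely that invertibility of a morphism between finitely generated projective modules is preserved under arbitrary base change, which is what makes each universal property applicable in turn. Both routes are valid; the paper's is shorter on the page because of the external reference, yours proves strictly the statement at hand without it.
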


\begin{proof}
First we show that $\mathcal A \Sigma ^{-1}*_K\mathcal B\cong
(\mathcal A*_K\mathcal B)\widetilde{\Sigma}^{-1}$ (over $\mathcal
A *_K \mathcal B$), where $\widetilde{\Sigma}$ is the image of
$\Sigma$ under the map $\mathcal A\to \mathcal A*_K\mathcal B$.
This follows from the easily proved fact that both algebras
satisfy an obvious universal condition with respect to $K$-algebra
homomorphisms $\psi _1\colon \mathcal A\to S$ and $\psi _2\colon
\mathcal B\to S$, with $\psi_1$ being $\Sigma$-inverting.

Let $\widetilde{I}$ be the ideal of $(\mathcal A*_K\mathcal
B)\widetilde{\Sigma}^{-1}$ generated by the image of $I$. By
\cite[Proposition 4.3]{DPP}, we have $((\mathcal A*_K\mathcal
B)\widetilde{\Sigma}^{-1})/\widetilde{I}\cong \mathcal
C\overline{\Sigma}^{-1}$ and thus combining with the above
isomorphism we get
$$\mathcal C\overline{\Sigma}^{-1}\cong ((\mathcal A*_K\mathcal
B)\widetilde{\Sigma}^{-1})/\widetilde{I}\cong (\mathcal A \Sigma
^{-1}*_K\mathcal B)/\ol{I},$$ as desired.
\end{proof}

\begin{lem}
\label{commuting} For $p\in \mathbb P$, $q\in \rL(p)$ and $f\in
\Sigma (p)$ we have
\begin{equation}
\label{eq:A.19}
e(p,q)f^{-1}=(f_0')^{-1}\ol{w}e(p,q)=e(p,q)(f_0')^{-1}\ol{w},
\end{equation}
and
\begin{equation}
\label{eq:A.20}
\ol{\alpha}_{p,q}f^{-1}=f^{-1}\ol{\alpha}_{p,q}+f^{-1}(f_0')^{-1}g\ol{w}e(p,q),
\end{equation}
 where $w$ is a monomial in $\{\alpha_{p,q'}\mid q'\ne q\}$, and
$f_0'\in L[\alpha _{p,q'}:q'\ne q]\cap \Sigma (p)$, and $g\in
L[\alpha _{p,q'}]$.
\end{lem}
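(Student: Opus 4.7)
The plan is to first put $e(p,q)f$ into normal form, invert it to derive (\ref{eq:A.19}), and then use an almost-commutation of $\alpha_{p,q}$ with $\ol{\alpha}_{p,q}$ to derive (\ref{eq:A.20}) from (\ref{eq:A.19}). Write $f=\sum_{j\ge 0}\alpha_{p,q}^jf_j$ with $f_j\in L[\alpha_{p,q'}:q'\ne q]$; the hypothesis $v_q(f)=0$ forces $f_0\ne 0$. Then factor $f_0=wf_0'$, where $w$ is the greatest monomial in $\{\alpha_{p,q'}:q'\ne q\}$ dividing every term of $f_0$. By construction no $\alpha_{p,q'}$ divides $f_0'$, and since $f_0'\in L[\alpha_{p,q'}:q'\ne q]$ is nonzero we also have $v_q(f_0')=0$; hence $f_0'\in\Sigma(p)$ is invertible in $\mathcal A\Sigma^{-1}$.

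For (\ref{eq:A.19}), relation (\ref{eq:A.3}) gives $e(p,q)\alpha_{p,q}=0$, which together with (\ref{eq:A.4}) yields $e(p,q)f=f_0e(p,q)=wf_0'e(p,q)$. The crucial computational input is the identity $\ol{w}w=e(p)$ for any monomial $w$ in $\{\alpha_{p,q'}:q'\ne q\}$. This reduces to the one-variable case $\ol{\alpha}_{p,q'}^n\alpha_{p,q'}^n=e(p)$, which follows by induction on $n$ from $\ol{\alpha}_{p,q'}\alpha_{p,q'}=e(p)$ and $e(p)\alpha_{p,q'}=\alpha_{p,q'}$. The reduction in turn requires the mixed commutation $\alpha_{p,q_i}\ol{\alpha}_{p,q_j}=\ol{\alpha}_{p,q_j}\alpha_{p,q_i}$ for $i\ne j$, which I derive by sandwiching (\ref{eq:A.5}) between two copies of $\ol{\alpha}_{p,q_j}$ and invoking $\ol{\alpha}_{p,q_j}\alpha_{p,q_j}=e(p)$, $\alpha_{p,q_j}\ol{\alpha}_{p,q_j}=e(p)-e(p,q_j)$, together with the direct consequence $\ol{\alpha}_{p,q_j}e(p,q_j)=0$ of (\ref{eq:A.13}). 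Once $\ol{w}w=e(p)$ is available, multiplying $e(p,q)f=wf_0'e(p,q)$ on the right by $f^{-1}$ and on the left by $(f_0')^{-1}\ol{w}$ yields the first equality of (\ref{eq:A.19}); the middle equality follows because both $\ol{w}$ and $(f_0')^{-1}$ commute with $e(p,q)$ by (\ref{eq:A.4}) and (\ref{eq:A.15}).

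For (\ref{eq:A.20}), start from the commutator $\alpha_{p,q}\ol{\alpha}_{p,q}-\ol{\alpha}_{p,q}\alpha_{p,q}=-e(p,q)$, immediate from (\ref{eq:A.13}). Together with $e(p,q)\alpha_{p,q}=0$, an easy induction on $j$ gives $\alpha_{p,q}^j\ol{\alpha}_{p,q}=\ol{\alpha}_{p,q}\alpha_{p,q}^j-\alpha_{p,q}^{j-1}e(p,q)$ for $j\ge 1$. Summing against the $f_j$ (each commuting with $\ol{\alpha}_{p,q}$ by the mixed commutation above and with $e(p,q)$ by (\ref{eq:A.4})) produces $f\ol{\alpha}_{p,q}-\ol{\alpha}_{p,q}f=-g_0e(p,q)$, where $g_0=\sum_{j\ge 1}\alpha_{p,q}^{j-1}f_j\in L[\alpha_{p,q'}]$. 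Conjugating by $f^{-1}$ and then substituting $e(p,q)f^{-1}=(f_0')^{-1}\ol{w}e(p,q)$ from (\ref{eq:A.19}) (and using that $g_0$ commutes with $f_0'$, both being polynomials in commuting generators) yields (\ref{eq:A.20}) with $g:=-g_0$. The principal obstacle throughout is producing the correct factorization $f_0=wf_0'$ with $f_0'\in\Sigma(p)$ and proving $\ol{w}w=e(p)$ via the mixed commutation of the $\alpha$'s and $\ol{\alpha}$'s; once these are in place, the remainder amounts to a direct bookkeeping with the defining relations.
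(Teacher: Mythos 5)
Your argument is correct and follows essentially the same route as the paper's proof: decompose $f$ by powers of $\alpha_{p,q}$, use $e(p,q)\alpha_{p,q}=0$ to get $e(p,q)f=f_0e(p,q)$, factor $f_0=wf_0'$ and left-invert $f_0$ via $(f_0')^{-1}\ol{w}$, then obtain (\ref{eq:A.20}) from the commutator identity $\ol{\alpha}_{p,q}f-f\ol{\alpha}_{p,q}=(f_1+\alpha_{p,q}f_2+\cdots+\alpha_{p,q}^{m-1}f_m)e(p,q)$ conjugated by $f^{-1}$. The only difference is that you make explicit the auxiliary verifications (that $\ol{w}w=e(p)$ and that $\alpha_{p,q_i}$ commutes with $\ol{\alpha}_{p,q_j}$ for $i\ne j$) which the paper leaves implicit.
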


\begin{proof}
Write
$$f=f_0+\alpha _{p,q}f_1+\cdots +\alpha _{p,q}^mf_m,$$
where $f_s\in L[\alpha _{pq'}:q'\ne q]$. Then $e(p,q)f=f_0e(p,q)$ by
relations (\ref{eq:A.3}) and (\ref{eq:A.4}). Now write
$$f_0=wf_0'$$
with $w$ a monomial in $\{\alpha _{p,q'}:q'\ne q\}$ and $f_0'\in
\Sigma (p)\cap L[\alpha _{p,q'}:q'\ne q]$. Then we get that $f_0$ is
left invertible in $e(p)Q_K(\mathbb P)e(p)$, because
$(f_0')^{-1}\ol{w}f_0=e(p)$. Thus $$e(p,q)f^{-1}
=(f_0')^{-1}\ol{w}e(p,q),$$ as desired. Also
$(f_0')^{-1}\ol{w}e(p,q)=e(p,q)(f_0')^{-1}\ol{w}$ because of
(\ref{eq:A.4}) and (\ref{eq:A.15}).

Now, observe that, since
$\ol{\alpha}_{p,q}f_i=f_i\ol{\alpha}_{p,q}$ for all $i$, we have
$$\ol{\alpha}_{p,q}f-f\ol{\alpha}_{p,q}=(f_1+\alpha_{p,q}f_2+\cdots +\alpha_{p,q}^{m-1}f_m)e(p,q)$$
so that, multiplying this relation on the left and on the right by
$f^{-1}$ and using (\ref{eq:A.19}), we get equation (\ref{eq:A.20})
with $g=-f_1-\alpha _{p,q}f_2-\cdots -\alpha _{p,q}^{m-1}f_m$.
\end{proof}

Observe that Proposition \ref{commadjoin} says that $Q_K(\mathbb P)$
can be obtained as follows. First consider the commutative
$L$-algebra
$$\mathcal A _2=\prod _{p\in \mathbb P}e(p)L[\alpha_{p,q}:q\in
\rL(p)]\Sigma (p)^{-1},$$ then we adjoin to $\mathcal A _2$ the
family of orthogonal idempotents $\{e(p,q):q\in \rL(p), p\in \mathbb
P\}$, with $e(p,q)e(p)=e(p,q)=e(p)e(p,q)$, and elements
$\{\ol{\alpha}_{p,q}:p\in \mathbb P, q\in \rL(p)\}$, and impose
relations (\ref{eq:A.3}), (\ref{eq:A.4}) and (\ref{eq:A.13}), to
obtain a new algebra $\mathcal A _3$. Note that
$$\mathcal A _3=\prod_{p\in \mathbb P}e(p)\mathcal A _3e(p).$$
Finally the algebra $Q_K(\mathbb P)$ is obtained by adjoining to
$\mathcal A _3$ generators $\beta _{p,q},\ol{\beta }_{p,q}$ for
$q\in \rL(p)$, subject to relations (\ref{eq:A.8}), (\ref{eq:A.9}),
(\ref{eq:A.10}) and (\ref{eq:A.14}).

\begin{lem}
\label{lem:techs} With the above notation we have

{\rm(1)} $e(p)\mathcal A _3\beta_{p,q}\subseteq \sum _{f\in \Sigma
(p)}\sum _{i=0}^{\infty} f^{-1}\alpha _{p,q}^i\beta_{p,q}L$.

{\rm (2)} $\ol{\beta}_{p,q}\mathcal A_3e(p)\subseteq \sum
_{i=0}^{\infty} L\ol{\beta}_{p,q}\ol{\alpha}_{p,q}^i$.

{\rm (3)} $\ol{\beta}_{p,q}\mathcal A_3\beta_{p,q}\subseteq
Le(q)$.

{\rm (4)} $\ol{\beta}_{p,q}\mathcal A_3\beta_{p,q'}=0$ for $q\ne
q'$.
\end{lem}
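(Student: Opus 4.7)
My plan is to prove (1) and (2) as closure statements for the proposed right-hand sides under left/right multiplication by generators of $e(p)\mathcal{A}_3e(p)$, and then to deduce (3) and (4) from (1) combined with a key absorption identity for $\ol{\beta}_{p,q}$. Throughout, Lemma \ref{commuting} is the engine controlling how elements of $\Sigma(p)^{-1}$ interact with the added generators $\ol{\alpha}_{p,q}$ and $e(p,q)$.

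I first record a small set of auxiliary identities, each proved in a line or two from the defining relations: (i) $\Sigma(p)$ is multiplicatively closed (each $v_{q''}$ is a valuation and $v=\max_{q''}v_{q''}$); (ii) the companion of \eqref{eq:A.9}, namely $\ol{\alpha}_{p,q_\ell}\beta_{p,q_j}=\beta_{p,q_j}t_{\sigma_j(\ell)}^{-1}$ for $j\ne\ell$, obtained by multiplying \eqref{eq:A.9} on the left by $\ol{\alpha}_{p,q_\ell}$ and using $\ol{\alpha}_{p,q_\ell}\alpha_{p,q_\ell}=e(p)$; (iii) the annihilations $\ol{\alpha}_{p,q}\beta_{p,q}=0$ and $\ol{\beta}_{p,q}\alpha_{p,q}=0$, both immediate from $\alpha_{p,q}\ol{\alpha}_{p,q}=e(p)-e(p,q)$ and $(e(p)-e(p,q))\beta_{p,q}=0$; (iv) the commutation $\alpha_{p,q}\ol{\alpha}_{p,q'}=\ol{\alpha}_{p,q'}\alpha_{p,q}$ for $q\ne q'$, proved by a short manipulation using \eqref{eq:A.4}, \eqref{eq:A.13}, \eqref{eq:A.15} together with $\ol{\alpha}_{p,q'}e(p,q')=0$; and (v) the basic absorption $\ol{\beta}_{p,q}f^{-1}=c\,\ol{\beta}_{p,q}$ with $c\in L^\times$, obtained by writing $\ol{\beta}_{p,q}f^{-1}=\ol{\beta}_{p,q}e(p,q)f^{-1}=\ol{\beta}_{p,q}(f_0')^{-1}\ol{w}\,e(p,q)$ via \eqref{eq:A.19} and then moving $(f_0')^{-1}$ and $\ol{w}$ across $\ol{\beta}_{p,q}$ using \eqref{eq:A.16}--\eqref{eq:A.18}; nonvanishing of the resulting scalar uses that the substitution $\alpha_{p,q_\ell}\mapsto t_{\sigma_j(\ell)}$ sends the nonzero polynomial $f_0'$ to a nonzero element of $L$ by algebraic independence.

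For part (1), put $T:=\sum_{f\in\Sigma(p),\,i\ge 0}f^{-1}\alpha_{p,q}^i\beta_{p,q}L$. Since $\mathcal{A}_3=\prod_{p'}e(p')\mathcal{A}_3e(p')$ and $e(p)\beta_{p,q}=\beta_{p,q}$, it suffices to show $T$ is stable under left multiplication by each generator of $e(p)\mathcal{A}_3e(p)$. Closure under $L$ and $\Sigma(p)^{-1}$ is immediate from (i); closure under $\alpha_{p,q'}$ uses (ii) and the commutativity of the $\alpha$'s; and closure under $e(p,q')$ is either zero ($q'\ne q$, via $e(p,q')\beta_{p,q}=0$ and \eqref{eq:A.4}) or handled by \eqref{eq:A.19} when $q'=q$. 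The decisive case is left multiplication by $\ol{\alpha}_{p,q'}$: apply \eqref{eq:A.20} (with $q$ replaced by $q'$) to split $\ol{\alpha}_{p,q'}f^{-1}$ into two summands; the first summand reduces, via (iv), (iii), and $\ol{\alpha}_{p,q}\alpha_{p,q}=e(p)$, to an element of $T$; the second, carrying a factor $e(p,q')$, vanishes against $\alpha_{p,q}^i\beta_{p,q}$ when $q'\ne q$, and, when $q'=q$, vanishes for $i\ge 1$ while the $i=0$ branch unfolds into a finite right-$L$-combination of $\alpha_{p,q}^s\beta_{p,q}$ after moving the internal polynomial $g$ past $\beta_{p,q}$ using (ii). Since $\beta_{p,q}\in T$, this proves (1). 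Part (2) is entirely dual: set $S:=\sum_{i\ge 0}L\ol{\beta}_{p,q}\ol{\alpha}_{p,q}^i$ and verify stability under right multiplication by the same generators, using (iii), (iv), \eqref{eq:A.16}--\eqref{eq:A.18}, and---for the $f^{-1}$ case---(v) together with an induction on $i$ via \eqref{eq:A.20}.

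Parts (3) and (4) are then quick corollaries. For (3), (1) gives $\mathcal{A}_3\beta_{p,q}\subseteq T$, hence by (v) $\ol{\beta}_{p,q}\mathcal{A}_3\beta_{p,q}\subseteq\sum_{f,i}c_f\,\ol{\beta}_{p,q}\alpha_{p,q}^i\beta_{p,q}L$; then (iii) kills the $i\ge 1$ terms and $\ol{\beta}_{p,q}\beta_{p,q}=e(q)$ handles $i=0$, collapsing the sum to $Le(q)$. For (4), the analogous chain applied to $\beta_{p,q'}$, combined with \eqref{eq:A.17} to absorb the resulting $\alpha_{p,q'}^i$ against $\ol{\beta}_{p,q}$, terminates at $\ol{\beta}_{p,q}\beta_{p,q'}=\ol{\beta}_{p,q}e(p,q)e(p,q')\beta_{p,q'}=0$ by orthogonality of the $e(p,q'')$'s. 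The main obstacle is the $\ol{\alpha}_{p,q}\cdot f^{-1}$ branch of (1), where \eqref{eq:A.20} must be unfolded carefully against $\alpha_{p,q}^i\beta_{p,q}$; but once (i)--(iv) are in place, the bookkeeping becomes mechanical.
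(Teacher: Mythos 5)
Your proof is correct and takes essentially the same route as the paper's: the same closure argument for $T_{(p,q)}=\sum_{f\in\Sigma(p),\,i\ge 0}f^{-1}\alpha_{p,q}^i\beta_{p,q}L$ under left multiplication by the generators of $e(p)\mathcal A_3e(p)$, driven by \eqref{eq:A.19}--\eqref{eq:A.20} together with $\ol{\alpha}_{p,q_\ell}\beta_{p,q_j}=\beta_{p,q_j}t_{\sigma_j(\ell)}^{-1}$ and $\ol{\alpha}_{p,q}\beta_{p,q}=0$, with (2) obtained by conjugation via the absorption identity $\ol{\beta}_{p,q}f^{-1}=c\,\ol{\beta}_{p,q}$. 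The only (immaterial) variation is that you deduce (3) and (4) from (1) plus that absorption identity, whereas the paper deduces them from (2); these are mirror images of the same computation, and your explicit auxiliary identities (i)--(v) just spell out steps the paper leaves implicit.
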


\begin{proof}
(1) Write $T_{(p,q)}:=\sum _{f\in \Sigma (p)}\sum _{i=0}^{\infty}
f^{-1}\alpha _{p,q}^i\beta_{p,q}L $. It suffices to check that
$\alpha _{p,q'}T_{(p,q)}\subseteq T_{(p,q)}$ and
$e(p,q')T_{(p,q)}\subseteq T_{(p,q)}$ and
$\ol{\alpha}_{p,q'}T_{(p,q)}\subseteq T_{(p,q)}$. The first relation
follows from (\ref{eq:A.9}). The containment
$$e(p,q')(\sum _{i=0}^{\infty}
\alpha _{p,q}^i\beta_{p,q}L)\subseteq \sum _{i=0}^{\infty} \alpha
_{p,q}^i\beta_{p,q}L$$ follows from (\ref{eq:A.3}), (\ref{eq:A.4})
and (\ref{eq:A.10}). It follows from this containment and from
(\ref{eq:A.19}) that, to show that $e(p,q')T_{(p,q)}\subseteq
T_{(p,q)}$, it suffices to check that
$${\ol{w}}\beta_{p,q}L\subseteq \beta_{p,q}L,$$ where $\ol{w}$ is a monomial
 in $\ol{\alpha}_{p,q''}$, $\, (q''\in \rL(p))$. Observe that relation
(\ref{eq:A.9}) implies that $\ol{\alpha} _{p,q_{\ell}}\beta
_{p,q_j}=\beta _{p,q_j}t_{\sigma_j (\ell)}^{-1}$ for $j\ne \ell$,
and that $\ol{\alpha}_{p,q}\beta _{p,q}=0$. We have therefore shown
that $e(p,q')T_{(p,q)}\subseteq T_{(p,q)}$. Now it follows from
(\ref{eq:A.20}) and the above that
$\ol{\alpha}_{p,q'}T_{(p,q)}\subseteq T_{(p,q)}$, completing the
proof of (1).

(2) This follows by ``conjugating" the above relations and the
following identity, for $f\in \Sigma (p)$, which comes from
(\ref{eq:A.19}) and (\ref{eq:A.16}), (\ref{eq:A.17}),
(\ref{eq:A.18}):
$$\ol{\beta}_{p,q_j}f^{-1}=\ol{\beta}_{p,q_j}e(p,q_j)f^{-1}=\ol{\beta}_{p,q_j}(f_0')^{-1}\ol{w}e(p,q_j)=
(\sigma ^p(f_0')(t_{\sigma
_j(\ell)}))^{-1}w(t_{\sigma_j(\ell)}^{-1})\ol{\beta}_{p,q_j},$$
where $w$ is a monomial in $\{\alpha_{p,q_{\ell}}: \ell\ne j\}$,
and $f_0'\in L[\alpha _{p,q_{\ell}}:\ell\ne j]\cap \Sigma (p)$,
and $\sigma ^p(f_0')(t_{\sigma_j(\ell)})\in L$ is the polynomial
obtained by applying $\sigma ^p$ to all the coefficients of $f_0'$
and replacing $\alpha _{p,q_{\ell}}$ with $t_{\sigma_j(\ell)}$,
for $\ell \ne j$.

(3) and (4) follow from (2) and relations (\ref{eq:A.14}),
(\ref{eq:A.9}).
\end{proof}

Let $\got m =z_1^{a_1}\cdots z_k^{a_k}$ be a commutative monomial in
$z_1^{\pm},\dots ,z_k^{\pm}$, so that $a_i\in \mathbb Z$. Assume that $p\in
\mathbb P$ and that $\rL(p)=\{q_1,\dots ,q_k\}$. Then we denote by $
\got m (\alpha _{p,q_1},\dots ,\alpha _{p,q_k})$ the element of
$Q_K(\mathbb P)$ given by formally substituting $z_i^{a_i}$ by
$\alpha _{p,q_i}^{a_i}$ if $a_i>0$, by
$(\ol{\alpha}_{p,q_i})^{-a_i}$ if $a_i<0$ and by $e(p)$ if $a_i=0$.
We say that $ \got m (\alpha _{p,q_1},\dots ,\alpha _{p,q_k})$ is a
{\it monomial in} $\alpha _{p,q_1},\dots
,\alpha_{p,q_k},\ol{\alpha}_{p,q_1},\dots ,\ol{\alpha}_{p,q_k}$.
Observe that it does not depend on the way we order the elements
$q_1,\dots ,q_k$.

Let $\mathcal T :=\mathcal T (\mathbb P)$ be the quiver associated
to the poset $\mathbb P$, see Definition \ref{defi: Q(M)}. A path in
$\mathcal T (\mathbb P)$ is a sequence $(p_1,\dots ,p_u)$ of
elements in $\mathbb P$ such that $p_{i+1}\in \rL(p_i)$ for all $i$.

\begin{lem}
\label{normalform} Every element in $Q_K(\mathbb P)$ can be
written as a finite sum of elements of the form:
\begin{align*}
\tag{*} (f_1)^{-1}\alpha _{p_1,p_2}^{m_1}\beta_{p_1,p_2} & \cdots
(f_{u-1})^{-1}\alpha _{p_{u-1},p_u}^{m_{u-1}}\beta _{p_{u-1},p_u}
 \got m (\alpha_{p_u,q_1},\dots ,\alpha _{p_u,q_k})(f_{u})^{-1} \\
 & \cdot \ol{\beta }_{p_{u+1},p_{u}}
  \ol{\alpha}_{p_{u+1},p_{u}}^{m_{u}}\ol{\beta}_{p_{u+2},p_{u+1}}
\cdots
\ol{\beta}_{p_{u+v+1},p_{u+v}}\ol{\alpha}_{p_{u+v+1},p_{u+v}}^{m_{u+v}},
\end{align*}
with $f_i\in \Sigma (p_i)$ for $i=1,\dots ,u$, $p_{i+1}\in \rL(p_i)$
for all $i=1,\dots ,u-1$ and $p_{u+i}\in \rL(p_{u+i+1})$ for all
$i=1,\dots ,v$. Moreover $\got m (\alpha_{p_u,q_1},\dots
,\alpha_{p_u,q_k})$ is a monomial in $\alpha _{p_u,q_1},\dots
,\alpha _{p_u,q_k},\ol{\alpha}_{p_u,q_1},\linebreak\dots
,\ol{\alpha}_{p_u,q_k}$, where $\rL(p_u)=\{q_1,\dots ,q_k\}$.
 We have
\begin{equation}
\label{eq:A.22} Q_K(\mathbb P)=\bigoplus _{(\gamma_1,\gamma_2)\in
\mathcal T ^2: r(\gamma_1)=r(\gamma _2)} Q_K(\mathbb
P)_{(\gamma_1,\gamma_2)},
\end{equation} where $Q_K(\mathbb P)_{(\gamma_1,\gamma_2)}$ is the
$L$-vector space generated by all terms (*) with $\gamma
_1=(p_1,\dots, p_u)$ and $\gamma _2=(p_{u+v+1}, \dots ,p_u)$.
\end{lem}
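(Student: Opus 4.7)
The plan is to establish the lemma in two steps: first, the spanning claim that the $L$-linear span $W$ of the terms $(*)$ equals $Q_K(\mathbb P)$; second, the directness of the decomposition.

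For spanning, note that $1 = \sum_{p \in \mathbb P} e(p)$ lies in $W$ via the degenerate instance $u = 1$, $v = 0$, $\got{m} = e(p)$, $f_1 = 1$ for each $p$. It then suffices to verify that $W$ is closed under left multiplication by every generator of $Q_K(\mathbb P)$: scalars $\lambda \in L$, idempotents $e(p)$, $e(p,q)$, and the elements $\alpha_{p,q}$, $\ol{\alpha}_{p,q}$, $\beta_{p,q}$, $\ol{\beta}_{p,q}$, and $f^{-1}$ for $f \in \Sigma(p)$. For each generator one traces its effect on a term $(*)$ using the defining relations (\ref{eq:A.3})--(\ref{eq:A.10}) and (\ref{eq:A.13})--(\ref{eq:A.14}) together with Lemmas \ref{commuting} and \ref{lem:techs}. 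The key commutation moves are: scalars $\lambda$ propagate past the $\beta$'s via (\ref{eq:A.8}) with $\sigma^p$-twists and ultimately absorb into $\got{m}$; $\alpha$'s commute past $(f_1)^{-1}$ by (\ref{eq:A.5}), through $\alpha_{p_1, p_2}^{m_1}$, and cross $\beta_{p_1, p_2}$ via (\ref{eq:A.9}) (which either increments $m_1$ or leaves an $L$-scalar); $\ol{\alpha}$'s use (\ref{eq:A.20}) from Lemma \ref{commuting} to cross $(f_1)^{-1}$ and the derived identity $\ol{\alpha}_{p, q_\ell}\beta_{p, q_j} = \beta_{p, q_j} t_{\sigma_j(\ell)}^{-1}$ (for $j \ne \ell$, a consequence of (\ref{eq:A.9})) to cross $\beta$'s; $f^{-1}$ with $f \in \Sigma(p_1)$ absorbs into $(f_1)^{-1}$ because $f f_1 \in \Sigma(p_1)$; $\beta_{p, p_1}$ simply prepends a new initial chunk with $f = 1$ and $m = 0$; and $\ol{\beta}_{p, q}$ either annihilates the term or contracts with the initial $(f_1)^{-1} \alpha_{p_1, p_2}^{m_1} \beta_{p_1, p_2}$ chunk (by Lemma \ref{lem:techs}(3)--(4) together with $e(p, q)\alpha_{p, q} = 0$), producing a scalar multiple of a shorter form $(*)$ when there is a full match. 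Orthogonality of the $e(p)$'s handles the remaining mismatched cases, establishing $W = Q_K(\mathbb P)$.

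For the direct sum decomposition, I would exhibit a grading on $Q_K(\mathbb P)$ indexed by pairs of paths in $\mathcal T$ with a common endpoint. A generator $\beta_{p, q}$ is assigned the bidegree ``edge $p \to q$ on the left''; $\ol{\beta}_{p, q}$ is assigned ``edge $p \to q$ on the right''; and all remaining generators ($\alpha$, $\ol{\alpha}$, idempotents, scalars from $L$, and the inverses $f^{-1}$ for $f \in \Sigma(p)$) lie in the identity bidegree. A case-by-case inspection confirms that all defining relations are bi-homogeneous---including (\ref{eq:A.9}), where the scalars $t_{\sigma_j(\ell)}$ that appear sit in $L$ and hence in the identity bidegree---so the grading is well-defined on $\mathcal A \Sigma^{-1}$, and assigning opposite bidegree to the inverses associated with $\Sigma_1$ extends it to $Q_K(\mathbb P) = \mathcal A(\Sigma \cup \Sigma_1)^{-1}$. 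Each normal-form term $(*)$ is then homogeneous of bidegree exactly $(\gamma_1, \gamma_2)$, and linear independence of distinct homogeneous components yields the direct sum $(\ref{eq:A.22})$.

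The main technical obstacle is the systematic verification of closure of $W$ under all the generators---particularly the $\ol{\alpha}$--$(f_1)^{-1}$ interaction via (\ref{eq:A.20}), whose two summands both require further normalization back to form $(*)$---and the verification that the proposed bi-grading is preserved by the universal localization at $\Sigma \cup \Sigma_1$. Once these bookkeeping tasks are carried out the lemma follows.
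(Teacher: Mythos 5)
Your spanning argument is essentially the paper's: the paper also derives $Q_K(\mathbb P)=\sum Q_K(\mathbb P)_{(\gamma_1,\gamma_2)}$ from Lemma \ref{lem:techs} (closure under the generators), so that half is fine. The gap is in the directness step. The proposed ``bi-grading'' by pairs of paths is not a grading by a group, and the claim that all defining relations are bi-homogeneous fails at exactly the point that matters: relation (\ref{eq:A.14}) gives $\beta_{p,q}\ol{\beta}_{p,q}=e(p,q)$, whose left side has bidegree $((p,q),(p,q))$ while you place $e(p,q)$ in the identity bidegree. (Indeed $e(p,q)$ genuinely lives in $Q_K(\mathbb P)_{((p,q),(p,q))}$ --- this is the term $(*)$ with $u=2$, $v=1$, $\got m=e(q)$ --- as the paper notes when it replaces $e(p,q)$ by $\beta_{p,q}\ol{\beta}_{p,q}$.) If you repair this by decreeing that a right-edge cancels an adjacent left-edge, so that both $\ol{\beta}_{p,q}\beta_{p,q}=e(q)$ and $\beta_{p,q}\ol{\beta}_{p,q}=e(p,q)$ become ``homogeneous'', your degree set collapses to the free group on the edges of $\mathcal T$, with $Q_K(\mathbb P)_{(\gamma_1,\gamma_2)}$ sitting in the component indexed by the reduced word $\gamma_1\gamma_2^{-1}$. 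Such a grading (even granting that it survives the localization at $\Sigma\cup\Sigma_1$, which is itself not mere bookkeeping) cannot distinguish $Q_K(\mathbb P)_{(p,p)}$ from $Q_K(\mathbb P)_{(\gamma,\gamma)}$ for nontrivial $\gamma$: all of these lie in the identity component. So no formal homogeneity check can prove the crucial fact that $Q_K(\mathbb P)_{(p,p)}$ meets the ideal $J$ generated by $e(q_1),\dots,e(q_k)$ trivially.

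That fact is the real content of the lemma, and the paper proves it non-formally: it observes that $(e(p)+J)(Q_K(\mathbb P)/J)(e(p)+J)\cong e(p)L(z_1,\dots,z_k)$ and constructs an explicit $L$-linear set-theoretic section $e(p)\got m f^{-1}\mapsto \got m(\alpha_{p,q_1},\dots,\alpha_{p,q_k})f(\alpha_{p,q_1},\dots,\alpha_{p,q_k})^{-1}$ of the quotient map, well defined thanks to (\ref{eq:A.5}) and $\ol{\alpha}_{p,q_i}\alpha_{p,q_i}=e(p)$; this forces $Q_K(\mathbb P)_{(p,p)}\cap J=0$. The general directness is then reduced to this case by choosing a pair $(\gamma_1,\gamma_1')$ maximal for the order $\succeq$ on pairs of paths and multiplying by elements $z_1,z_2$ (built from $\ol{\beta}$'s, $\ol{\alpha}$'s and the $f_i$) that send $x_{(\gamma_1,\gamma_1')}$ into $Q_K(\mathbb P)_{(p,p)}\setminus\{0\}$ and all lower components into $J$. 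Your proposal contains no substitute for either of these two steps, so as it stands the direct-sum claim (\ref{eq:A.22}) is not established.
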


\begin{remark}
\label{rema:commondenominator}  Observe that if we have finitely
many elements in $Q_K(\mathbb P)_{(\gamma _1,\gamma_2)}$, we can
express them as a finite sum of terms of the form (*) where the
sequence $(f_1,\dots ,f_{u-1})$ is the same in all the terms. This
can be done by taking common denominators, starting with the
corresponding polynomials in $\Sigma (p_1)$, and then using
relations (\ref{eq:A.9}) to re-write all the terms with a common
$f_1$. Now take common denominators for the polynomials from $\Sigma
(p_2)$ appearing in the new terms, and re-write again. After $u-1$
steps all the terms have the same sequence $(f_1,\dots ,f_{u-1})$.
We will refer to this process as ``taking common denominators".
\end{remark}

\noindent {\it Proof of Lemma \ref{normalform}}  The fact that
$Q_K(\mathbb P)=\sum _{(\gamma_1,\gamma_2)\in \mathcal T ^2:
r(\gamma_1)=r(\gamma _2)} Q_K(\mathbb P)_{(\gamma_1,\gamma_2)}$
follows from Lemma \ref{lem:techs}. (Observe that the idempotents
$e(p,q)$ can be replaced with $\beta_{p,q}\ol{\beta}_{p,q}$.)

 We have to prove that this is a direct sum.
First we claim that the set $Q_K(\mathbb P)_{(p,p)}$ of $L$-linear
combinations of terms of the form $ \got m (\alpha_{p,q_1},\dots
,\alpha _{p,q_k})f^{-1}$, where $f\in \Sigma (p)$, and $\got m $ is
a monomial in $\alpha _{p,q_1},\dots ,\ol{\alpha}_{p,q_k}$, has
trivial intersection with the ideal $J$ of $Q_K(\mathbb P)$
generated by $e(q_1),\dots ,e(q_k)$. Indeed the canonical projection
$Q_K(\mathbb P)\to Q_K(\mathbb P)/J$ induces an isomorphism
$(e(p)+J)(Q_K(\mathbb P)/J)(e(p)+J)\cong e(p)L(z_1,\dots ,z_k)$. The
field $L(z_1,\dots ,z_k)$ is the directed union of the sets $L_f$ of
elements of the form $(\sum _{i=1}^s \lambda _i\got m_i)f^{-1}$,
where $\lambda _i\in L$, $\got m _i$ are (commutative) monomials in
$z_1^{\pm},\dots ,z_k^{\pm}$ and $f\in L[z_1,\dots ,z_k]$ with
$v(f)=0$. The $L$-linear map $e(p)L(z_1,\dots ,z_k)\to
e(p)Q_K(\mathbb P)e(p)$ given by
$$e(p)\got m f^{-1}
\mapsto  \got m (\alpha _{p,q_1},\dots ,\alpha _{p,q_k})f(\alpha
_{p,q_1},\dots ,\alpha_{p,q_k})^{-1}$$ is well-defined due to the
relations (\ref{eq:A.5}) and $\ol{\alpha}_{p,q_i}\alpha
_{p,q_i}=e(p)$ for all $i$. This map provides a set theoretical
section of the projection $e(p)Q_K(\mathbb P)e(p)\to L(z_1,\dots
,z_k)$. This shows our claim.

Given a nonzero element $x$ in $Q_K(\mathbb P)_{(\gamma,\gamma')}$,
with $\gamma,\gamma '\in \mathcal T(A)$ and $r(\gamma )=r(\gamma
')$, written as a finite sum of elements of the form (*), we can
take common denominators (Remark \ref{rema:commondenominator}) and
assume that every summand of the form (*) has exactly the same
sequence $(f_1,\dots ,f_{u-1})$. Let $M_1$ be the highest power of
$\alpha _{p_1,p_2}$ appearing in $x$, that is, such that
$\ol{\alpha}_{p_1,p_2}^{M_1}f_1 x\ne 0$. Then
$$0\ne \ol{\beta}_{p_1,p_2}\ol{\alpha}_{p_1,p_2}^{M_1}f_1x\in Q_K(\mathbb P)_{(\gamma_2,\gamma ')},$$
where $\gamma _2=(p_2,\dots ,p_u)$.
 Proceeding in this way we see that there is
 $$z_1=\ol{\beta}_{p_{u-1},p_u}\ol{\alpha}_{p_1,p_2}^{M_{u-1}}f_{u-1}
 \cdots \ol{\beta}_{p_1,p_2}\ol{\alpha}_{p_1,p_2}^{M_1}f_1$$
and $$z_2=\alpha _{p_{u+v+1},p_{u+v}}^{M_{u+v}}\beta
_{p_{u+v+1},p_{u+v}}\cdots \alpha _{p_{u+1},p_{u}}^{M_u}\beta
_{p_{u+1},p_{u}}$$ such that $z_1xz_2\in Q_K(\mathbb
P)_{(p_u,p_u)}\setminus \{0\}$.

Consider the following partial order on the set of pairs
$\Gamma:=\{(\gamma,\gamma ' )\in \mathcal T^2: r(\gamma )=r(\gamma
')\}$: say that  $(\gamma _1,\gamma _1')\succeq (\gamma _2,\gamma
_2')$ iff $\gamma _2=\gamma _1\gamma _3$ and $\gamma
_2'=\gamma_1'\gamma _3'$ for some paths $\gamma _3,\gamma _3'$ in
$\mathcal T$. In order to show that the sum in (\ref{eq:A.22}) is a
direct sum, suppose that $\sum _{i=1}^sx_{(\gamma_i,\gamma_i')}=0$,
with $(\gamma_i,\gamma _i')$ distinct elements of $\Gamma$ and
$x_{(\gamma_i,\gamma_i')}\in Q_K(\mathbb P)_{(\gamma _i,\gamma
_i')}\setminus \{0\}$ for all $i$. We can assume that
$(\gamma_1,\gamma _1')$ is maximal with respect to $\succeq$
(amongst the pairs $(\gamma _i,\gamma _i')$).

Set $p=r(\gamma _1)=r(\gamma _1')$. Let $z_1$ and $z_2$ be the
elements of $Q_K(\mathbb P)$ corresponding to $x_{(\gamma_1,\gamma
_1')}$, constructed above, so that $z_1x_{(\gamma _1,\gamma
_1')}z_2\in Q_K(\mathbb P)_{(p,p)}\setminus \{0\}$. Observe that, by
maximality of $(\gamma _1,\gamma_1')$, we have that $z_1Q_K(\mathbb
P)_{(\gamma _i,\gamma _i')}z_2=0$ unless $(\gamma _i,\gamma
_i')\preceq (\gamma _1,\gamma _1')$. We conclude that
$$\sum _{i=2}^s z_1Q_K(\mathbb P)_{(\gamma _i,\gamma _i')}z_2\subseteq J,$$
where $J$ is the ideal of $Q_K(\mathbb P)$ generated by
$e(q_1),\dots ,e(q_k)$, with $\rL(p)=\{q_1,\dots ,q_k\}$. We get
$$0\ne z_1x_{(\gamma _1,\gamma_1')}z_2=
-\sum _{i=2}^s z_1x_{(\gamma _i,\gamma _i')}z_2\in Q_K(\mathbb P)_{(p,p)}\cap J=0,$$
which is a contradiction. \qed

\medskip

The method of proof of the above proposition gives the following
nice technical lemma, which will be used to prove injectivity of
some maps defined from $Q_K(\mathbb P)$ to other $K$-algebras. For
$x\in Q_K(\mathbb P)$, write $x=\sum x_{(\gamma _1,\gamma _2)}\in
\bigoplus _{r(\gamma _1)=r(\gamma _2)} Q_K(\mathbb P)_{(\gamma
_1,\gamma_2)}$, see Lemma \ref{normalform}. Then the {\it support}
of $x$ is the set of pairs $(\gamma _1,\gamma _2)$ such that
$x_{(\gamma_1,\gamma_2)}\ne 0$.

\begin{lem}
\label{lem:injectivity} Assume that $x$ is a nonzero element in
$Q_K(M(\mathbb P))$. Then there exist $p\in \mathbb P$ and
$z_1,z_2\in Q_K(\mathbb P)$ such that $z_1xz_2$ has the trivial pair
of paths $(p,p)$ in the support, and all the other elements in the
support of $z_1xz_2$ are of the form $(\gamma _1,\gamma _2)$, where
$\gamma _1$ and $\gamma _2$ are paths in $\mathcal T$ starting in
$p$ and ending in a common vertex.
\end{lem}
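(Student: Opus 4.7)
The plan is to re-use, almost verbatim, the machinery already built in the proof of Lemma~\ref{normalform}. First I would invoke Lemma~\ref{normalform} to write $x=\sum_{i=1}^{s}x_{(\gamma_i,\gamma_i')}$ as a finite sum of non-zero pieces $x_{(\gamma_i,\gamma_i')}\in Q_K(\mathbb P)_{(\gamma_i,\gamma_i')}$ over distinct pairs with a common range in $\mathcal T$; after the common-denominator procedure of Remark~\ref{rema:commondenominator}, the summands share a uniform sequence of denominators $f_j$.

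Next I would pick $(\gamma_1,\gamma_1')$ maximal in the support of $x$ with respect to the partial order $\succeq$ on $\Gamma$ introduced in the proof of Lemma~\ref{normalform}, set $p:=r(\gamma_1)=r(\gamma_1')$, and define the elements $z_1,z_2\in Q_K(\mathbb P)$ precisely as in that proof, using the maximal exponents $M_j$ of the $\alpha$'s appearing in the normal form of $x_{(\gamma_1,\gamma_1')}$. By construction, $z_1\in e(p)\,Q_K(\mathbb P)\,e(s(\gamma_1))$ and $z_2\in e(s(\gamma_1'))\,Q_K(\mathbb P)\,e(p)$, so $z_1xz_2\in e(p)\,Q_K(\mathbb P)\,e(p)$. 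A glance at the direct-sum decomposition~\eqref{eq:A.22} shows that every summand of $e(p)\,Q_K(\mathbb P)\,e(p)$ lies in some $Q_K(\mathbb P)_{(\delta,\delta')}$ with $s(\delta)=s(\delta')=p$, which already yields the ``paths starting at $p$'' half of the conclusion.

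To verify that the pair $(p,p)$ itself appears in the support, I would extract from the proof of Lemma~\ref{normalform} the two facts established there: the equality $z_1\,x_{(\gamma_1,\gamma_1')}\,z_2\in Q_K(\mathbb P)_{(p,p)}\setminus\{0\}$, and the containment $\sum_{i\ge 2}z_1\,x_{(\gamma_i,\gamma_i')}\,z_2\in J$, where $J$ is the ideal generated by $\{e(q):q\in \rL(p)\}$. Combined with the identity $Q_K(\mathbb P)_{(p,p)}\cap J=0$ (also proved inside Lemma~\ref{normalform} through the identification of a suitable quotient of $e(p)Q_K(\mathbb P)e(p)$ with a rational function field), these facts force the $(p,p)$-component of $z_1xz_2$ to coincide with $z_1\,x_{(\gamma_1,\gamma_1')}\,z_2$, which is non-zero.

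The one step which is more than bookkeeping is the claim that $z_1\,x_{(\gamma_i,\gamma_i')}\,z_2=0$ whenever $(\gamma_i,\gamma_i')\not\preceq(\gamma_1,\gamma_1')$, and this is where I expect the main obstacle to lie when writing out details. The key calculation rests on the identity $\ol{\beta}_{p',q}\beta_{p',q'}=0$ for $q\ne q'$ (immediate from $\ol{\beta}_{p',q}=\ol{\beta}_{p',q}e(p',q)$, $\beta_{p',q'}=e(p',q')\beta_{p',q'}$ and the orthogonality $e(p',q)e(p',q')=0$), together with the commutations $\ol{\alpha}_{p',q}\alpha_{p',q'}=\alpha_{p',q'}\ol{\alpha}_{p',q}$ and $\ol{\alpha}_{p',q}e(p',q')=e(p',q')\ol{\alpha}_{p',q}$ for $q\ne q'$, which allow one to push the $\ol{\alpha}$'s through any ``wrong-branch'' factors of $x_{(\gamma_i,\gamma_i')}$ until an $\ol{\beta}\cdot\beta$ pair with mismatched second index causes annihilation. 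These are exactly the manipulations already implicit in the proof of Lemma~\ref{normalform}, so no genuinely new ingredient is needed beyond careful bookkeeping.
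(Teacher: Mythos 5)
Your proposal is correct and follows essentially the same route as the paper: the paper's proof of Lemma~\ref{lem:injectivity} simply selects a $\preceq$-maximal pair in the support of $x$ and reuses the elements $z_1,z_2$ constructed in the proof of Lemma~\ref{normalform}, exactly as you do. Your additional verifications (that $z_1xz_2\in e(p)Q_K(\mathbb P)e(p)$ forces all support pairs to start at $p$, that $Q_K(\mathbb P)_{(p,p)}\cap J=0$ keeps the $(p,p)$-component nonzero, and that maximality kills the non-comparable terms) are precisely the facts already established inside that earlier proof, so no new ingredient is needed.
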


\begin{proof}
Consider the partial order $\preceq$ of the proof of Proposition
\ref{normalform} on the support of $x$. Let $(\gamma _1,\gamma _2)$
be a maximal element of the support of $x$ with respect to this
partial order, and set $p=r(\gamma _1)=r(\gamma _2)$. Let $z_1$ and
$z_2$ be the elements of $Q_K(\mathbb P)$ corresponding to
$x_{(\gamma_1,\gamma _2)}$, constructed as in the proof of Lemma
\ref{normalform}. Then $z_1xz_2$ has the desired properties.
\end{proof}

In order to show the coherence of our construction (in particular
to show that $e(p)\ne 0$ for every $p\in \mathbb P$), we will
introduce a canonical faithful representation of the algebras
$Q_K(\mathbb P)$. Observe that for the simple case where $\mathbb
P =\{p_0,p_1\}$ with $p_0<p_1$ our representation of $Q_K(\mathbb
P)$ is basically the canonical representation as Toeplitz
operators on $L[z]$.

For a lower subset $A$ of $\mathbb P $, we will consider the
idempotent $e(A)=\sum _{q\in A}e(q)\in Q_K(\mathbb P)$.

\begin{theor}
\label{theor:repres} The algebra $Q_K(\mathbb P)$ acts faithfully
as $K$-linear maps on a $L$-vector space
$$V(\mathbb P)=\bigoplus _{p\in \mathbb P} V_{\mathbb P}(p),$$
where $e(p)$ is the identity on $V(p)$ and $0$ on $V(q)$ for $q\ne
p$. If $A$ is a lower subset of $\mathbb P$ then we have a canonical
isomorphism $\psi_A\colon Q_K(A)\longrightarrow e(A)Q_K(\mathbb
P)e(A)$, and $V(A)=\oplus _{p\in A}V_{\mathbb P}(p)$. Moreover
$(v)\psi _A(x)=(v)x$ for all $x\in Q_K(A)$ and all $v\in V(A)$.
\end{theor}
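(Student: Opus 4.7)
The plan is to construct $V(\mathbb P)=\bigoplus_{p\in\mathbb P}V_{\mathbb P}(p)$ together with the right action of $Q_K(\mathbb P)$ by induction on the height of primes, verify the defining relations, establish invertibility of $\Sigma\cup\Sigma_1$, and then derive faithfulness from Lemma~\ref{lem:injectivity}. For minimal $p$, set $V_{\mathbb P}(p):=L$ with $e(p)$ as identity. For non-minimal $p$ with $\rL(p)=\{q_1,\dots,q_{n_p}\}$, having already defined the $V_{\mathbb P}(q_j)$, let
\[
V_{\mathbb P}(p)\;=\;W(p)\;\oplus\;\bigoplus_{j=1}^{n_p}V_{\mathbb P}(q_j)^{\sigma^p},
\]
where $V_{\mathbb P}(q_j)^{\sigma^p}$ is $V_{\mathbb P}(q_j)$ with its $L$-action retwisted by $\sigma^p$, and $W(p)$ is a ``Toeplitz bulk'' concretely built so that commuting backward-shifts $\alpha_{p,q_j}$ act on it and every $f\in\Sigma(p)$ acts invertibly. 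Declare $e'(p)$ and the $e(p,q_j)$ to be the projections onto the indicated summands; each $\alpha_{p,q_j}$ to act as the Toeplitz backward-shift on $W(p)$ and as multiplication by the scalar $t_{\sigma_\ell(j)}\in L$ on the $\ell$-th twisted summand for $\ell\ne j$ (this prescription is precisely \eqref{eq:A.9}); and each $\beta_{p,q_j}$ to project $V_{\mathbb P}(p)$ onto the $j$-th twisted summand and then undo the $\sigma^p$-twist to land in $V_{\mathbb P}(q_j)$. A direct check verifies the relations \eqref{eq:A.3}--\eqref{eq:A.10}, \eqref{eq:A.13}--\eqref{eq:A.14}.

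For invertibility of $\Sigma\cup\Sigma_1$, take $f\in\Sigma(p)$: the condition $v(f)=0$ makes $f$ block upper-triangular with invertible scalar diagonal under the monomial filtration of $W(p)$, so its formal Toeplitz inverse converges on each vector; on each twisted summand, $f$ acts as a nonzero scalar of $L$ obtained by evaluating the defining polynomial at the relevant $t$'s. For $\mu_{p,q}\in\Sigma_1$, the map $(\alpha_{p,q},\beta_{p,q})\colon V_{\mathbb P}(p)\to V_{\mathbb P}(p)\oplus V_{\mathbb P}(q)$ is bijective by the summand decomposition, with surjectivity built in and injectivity because $\ker(\alpha_{p,q})\cap\ker(\beta_{p,q})=0$ by construction. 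This produces a $K$-algebra homomorphism $\rho\colon Q_K(\mathbb P)\to\mathrm{End}_K(V(\mathbb P))^{\mathrm{op}}$.

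To prove faithfulness of $\rho$, take $0\ne x\in Q_K(\mathbb P)$. By Lemma~\ref{lem:injectivity}, there exist $z_1,z_2\in Q_K(\mathbb P)$ and $p\in\mathbb P$ such that $y:=z_1xz_2$ has $(p,p)$ in its support and every other pair in the support consists of paths both starting at $p$. The $(p,p)$-component of $y$ is a nonzero element of $e(p)\mathcal A_3 e(p)$ whose image in the rational function field $L(z_1,\dots,z_{n_p})$ is nonzero (by the argument in the proof of Lemma~\ref{normalform}); so it acts nontrivially on some vector $v\in W(p)$. All other support components of $y$ send $v$ into subspaces of $V(\mathbb P)$ disjoint from $W(p)$---either into some $V_{\mathbb P}(q)$ with $q\ne p$, or into a twisted summand of $V_{\mathbb P}(p)$---so they cannot cancel $(v)y_{(p,p)}$; hence $(v)\rho(y)\ne 0$ and $\rho(x)\ne 0$.

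Finally, for a lower subset $A\subseteq\mathbb P$, the inclusion is a complete homomorphism of labelled posets, so Proposition~\ref{uniprop} yields a $K$-algebra map $Q_K(A)\to Q_K(\mathbb P)$ sending $1_{Q_K(A)}$ to $e(A)$, necessarily landing in the corner $e(A)Q_K(\mathbb P)e(A)$. An inverse is produced from the universal property of $Q_K(A)$: by Lemma~\ref{normalform}, the corner is generated precisely by the generators indexed by elements of $A$, which satisfy only the relations of $Q_K(A)$. The compatibility $(v)\psi_A(x)=(v)x$ for $v\in V(A)$ is immediate from the recursive construction, which gives $V_{\mathbb P}(p)=V_A(p)$ canonically for $p\in A$. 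The principal technical obstacle is arranging $W(p)$ so that the Toeplitz inverses for every $f\in\Sigma(p)$ coexist with the commutation \eqref{eq:A.5} in the presence of the $\sigma^p$-twists imposed by \eqref{eq:A.8}--\eqref{eq:A.9}.
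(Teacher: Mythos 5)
Your overall architecture is the same as the paper's: build $V(p)$ inductively from the spaces $V(q)$ for $q\in\rL(p)$, check the relations and the invertibility of $\Sigma\cup\Sigma_1$ on the model, deduce faithfulness from Lemma~\ref{lem:injectivity}, and get the corner isomorphism from surjectivity (Lemma~\ref{normalform}) plus faithfulness of the restricted representation. But the two steps you leave open or merely assert are exactly where the content of the theorem lies. First, you never construct $W(p)$, and the invertibility of $\Sigma(p)$ is not something an unspecified ``Toeplitz bulk'' will deliver: $\Sigma(p)$ contains polynomials with \emph{zero constant term} (e.g.\ $f=x_{q_1}+x_{q_2}$ has $v(f)=0$), and on the naive model $L[z_1,\dots,z_k]$ with commuting backward shifts such an $f$ annihilates the constants, so ``invertible scalar diagonal under the monomial filtration'' fails. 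The paper's resolution is to take the $j$-th block of $V(p)$ to be $V(q_j)\otimes_L L_j$ with $L_j=L(z_1,\dots,z_{j-1},z_{j+1},\dots,z_k)[z_j]$: there $\alpha_{p,q_j}$ is the backward shift in $z_j$ while every other $\alpha_{p,q_\ell}$ acts as the \emph{invertible} scalar $z_\ell^{-1}$ of the coefficient field $K_j$, so any $f$ with $v_j(f)=0$ is an invertible element of $K_j$ plus a topologically nilpotent perturbation and its formal inverse converges. In particular $e'(p)V(p)=\bigoplus_j V(q_j)\otimes z_jL_j$ is built out of the lower spaces $V(q_j)$, not independently of them; this choice is the key idea missing from your sketch, as your own closing sentence concedes.

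Second, the faithfulness argument does not work as stated. A support component $(\gamma_1,\gamma_2)$ with both paths nontrivial ends, in the normal form of Lemma~\ref{normalform}, with $\ol{\beta}_{p,q}\,\ol{\alpha}_{p,q}^{\,m}$; for $m>0$ this carries $e(p,q)V(p)$ into $\ol{\alpha}_{p,q}^{\,m}\bigl(e(p,q)V(p)\bigr)\subseteq e'(p)V(p)=W(p)$, so these components do \emph{not} land in subspaces disjoint from $W(p)$, and cancellation with $(v)y_{(p,p)}$ is not excluded by a ``different summands'' argument. The paper instead evaluates on the explicit test vector $v\otimes z_1^N$ with $N$ larger than all exponents occurring, so that after the initial $f_1^{-1}\alpha_{p,q_u}^{w}$ every nontrivial component still has no piece in $e(p,q_u)V(p)$ and its first $\beta_{p,q_u}$ annihilates the vector, while the $(p,p)$-component visibly survives. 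Finally, you cannot obtain the inverse of $\psi_A$ ``from the universal property'' by asserting that the generators in the corner satisfy only the relations of $Q_K(A)$ --- that is precisely what must be proved; the correct route, which you almost state, is that $\psi_A$ is surjective by Lemma~\ref{normalform} and injective because $\tau_A(x)=\tau_{\mathbb P}(\psi_A(x))_{|V(A)}$ and $\tau_A$ is faithful.
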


\begin{proof}
We will define a {\it right} action of $Q_K(M)$ as $K$-linear
endomorphisms on a $L$-vector space.

We proceed to build the $L$-vector spaces by induction. Set $\mathbb
P^0:=\text{Min}(\mathbb P)$, the set of minimal elements of $\mathbb
P$. If $\mathbb P^0, \dots , \mathbb P^i$ have been defined, set
$\mathbb P^{i+1}=\text{Min}(\mathbb P\setminus \cup _{j=0}^i\mathbb
P^j )$. Obviously $\mathbb P=\cup _{i=0}^r\mathbb P ^i$ for some
$r$, and $\rL(p)\subseteq \cup _{j=0}^i\mathbb P^j$ for $p\in\mathbb
P^{i+1}$. Note that the sets $\cup _{j=0}^i\mathbb P ^j$ are lower
subsets of $\mathbb P$.

For $p\in \mathbb P ^0$, set $V(p)=L$. The action of
$e(p)Q_K(\mathbb P^0)e(p)=L$ on $V(p)=L$ is given by
multiplication.

Now assume that the $L$-vector spaces $V(p)$ have been constructed
for all $p\in \mathbb P^j$, $0\le j\le i$, and that a representation
$\tau _i\colon Q_K(\cup _{j=0}^i \mathbb P ^j)\to \oplus _{p\in \cup
_{j=0}^i \mathbb P^j}V(p)$ satisfying the required conditions has
been defined. Let $p\in \mathbb P ^{i+1}$ and write
$\rL(p)=\{q_1,\dots ,q_k\}$. Observe that $V(q_j)$ has been defined.
Now put
$$V(p)=\bigoplus _{j=0}^k V(q_j) \otimes_L L_j,$$
where $L_j:=L(z_1,\dots ,z_{j-1},z_{j+1},\dots ,z_k)[z_j]$. Now the
action of $\ol{\alpha} _{p,q_{\ell}}$ on $V(q_j) \otimes _L L_j $ is
given by multiplication by $z_{\ell}$, that is
$$(v_j\otimes \lambda _j )\tau_{i+1}(\ol{\alpha} _{p,q_{\ell}})= v_j\otimes \lambda_jz_{\ell}$$
for $v_j\in V(q_j)$ and $\lambda_j\in L_j$. The action of
$\alpha_{p,q_{\ell}}$ on $V(q_j)\otimes _L L_j$, with $j\ne \ell$,
is given by
$$( v_j\otimes \lambda _j)\tau_{i+1}(\alpha_{p,q_{\ell}})= v_j\otimes \lambda_jz_{\ell}^{-1}.$$
The action of $\alpha_{p,q_{\ell}}$ on $ V(q_{\ell})\otimes
L_{\ell}$ is given as follows:
$$(v_{\ell}\otimes (f_0+f_1z_{\ell}+\cdots +f_v(z_{\ell})^v))\tau_{i+1}(\alpha_{p,q_{\ell}})=
v_{\ell}\otimes (f_1+\cdots +f_v(z_{\ell})^{v-1}),$$ where $f_b\in
L(z_1,\dots ,z_{\ell -1},z_{\ell +1},\dots , z_k)$. Write
$K_j=L(z_1,\dots,z_{j -1},z_{j +1},\dots ,z_k)$. Note that
$e(p,q_j)=e(p)-\alpha _{p,q_j}\ol{\alpha }_{p,q_j}$ projects $V(p)$
onto $V(q_j)\otimes _L K_j$. Thus define $\tau _{i+1}(\beta
_{p,q_j})$ as $0$ on  $(\oplus _{q\in \cup _{j=0}^{i+1} \mathbb
P^j}V(q))(1-e(p,q_j))$, and as the isomorphism from $V(q_j)\otimes_L
K_j$ onto $V(q_j)$ given by the composition
\begin{equation}
\begin{CD}
V(q_j)\otimes _L L(z_1,\dots,z_{j -1},z_{j +1},\dots ,z_k) @>
1\otimes \sigma ^j>> V(q_j)\otimes _L L @>\cong>> V(q_j),
\end{CD}
\end{equation}
where $\sigma ^j$ is the isomorphism given by $(z_{\ell})\sigma
^j=t_{\sigma _j(\ell)}$ and $(t_u)\sigma^j=t_{u+k-1}$. The action of
$\ol{\beta} _{p,q_j}$ is now determined by the rules
(\ref{eq:A.14}).

It is straightforward to show that $\tau _{i+1}$ preserves the
defining relations for the algebra $\mathcal A \Sigma _1^{-1}$ (see
Definition \ref{defi: Q(M)}). We have to check that $\tau _{i+1}(f)$
is an invertible endomorphism of $V(p)$ for each $f\in \Sigma (p)$.
Let us fix $j$ with $1\le j\le k$, and write
$$f=f_0(\alpha _{p,q})+\alpha _{p,q_j}f_1(\alpha _{p,q})+\cdots +(\alpha _{p,q_j})^uf_u(\alpha _{p,q}),$$
where $f_b\in L[z_1,\dots ,z_{j-1},z_{j+1},\dots ,z_k]$. Let
$g\mapsto \ol{g}$ denote the involution on $K_j$ which is the
identity on $L$, and sends $z_{\ell}$ to $z_{\ell}^{-1}$ for
$\ell\ne j$. Let $\phi_j\in \text{End}_{K_j}(L_j) $ denote the map
defined by
$$(f_0+f_1z_{j}+\cdots +f_vz_{j}^v)\phi_j=
f_1+\cdots +f_vz_{j}^{v-1},$$ for $f_b\in K_j$.
 Then
$\tau_{i+1}(f)$ acts on $V(q_j)\otimes_L L_j$ by
$$(v_j\otimes \lambda _j)\tau_{i+1}(f)= v_j\otimes [\lambda _j\ol{f}_0+(\lambda _j\ol{f}_1)\phi _j+
\cdots + (\lambda _j\ol{f}_u)\phi _j^u].$$ Since
$\ol{f}_0+\ol{f}_1z_j+\cdots + \ol{f}_uz_j^u\in K_j[z_j]$ has
constant term $\ol{f}_0\ne 0$ (because $f\in \Sigma (p)$), it is
invertible in $K_j[[z_j]]$. Let
$$G=\sum _{a=0}^{\infty} g_az_j^a\in K_j[[z_j]]$$
be the inverse of $\ol{f}_0+\ol{f}_1z_j+\cdots + \ol{f}_uz_j^u$.
Then it is readily seen that $\sum _{a=0}^{\infty} g_a\phi _j ^a$
defines an endomorphism on $V(q_j)\otimes_L L_j$ which is the
inverse of the restriction of $\tau _{i+1}(f)$ to $V(q_j)\otimes_L
L_j$. This shows that $\tau _{i+1}(f)$ is an invertible endomorphism
of $V(p)$, as thus we obtain a representation $\tau _{i+1}\colon
Q_K(\cup _{j=0}^{i+1} \mathbb P ^j)\to \text{End}_K(\oplus _{p\in
\cup _{j=0}^{i+1} \mathbb P^j}V(p))$. This completes the induction
step.

In this way, we obtain a representation $\tau =\tau _{\mathbb
P}\colon Q_K(\mathbb P)\to \text{End}_K(V(\mathbb P))$. Observe
that, if $A$ is a lower subset of $\mathbb P$, then $V(A)=\oplus
_{p\in A}V_{\mathbb P}(p)$ and $\tau _A(x)=\tau _{\mathbb P}(\psi
_A(x))_{|V(A)}$, where $\psi _A\colon Q_K(A)\to e(A)Q_K(\mathbb
P)e(A)$ is the canonical map.

Now we are going to show that the representation $\tau$ is faithful.
Assume that $x$ is a nonzero element of $Q_K(\mathbb P)$. By Lemma
\ref{lem:injectivity}, there are $z_1,z_2$ in $Q_K(\mathbb P)$ such
that $z_1xz_2$ has a nonzero coefficient in $Q_K(\mathbb P)_{(p,p)}$
for some $p\in \mathbb P$, and all other nonzero coefficients of
$z_1xz_2$ belong to $\sum Q_K(\mathbb P)_{(\gamma_1,\gamma _2)}$,
where in the above sum $(\gamma_1,\gamma _2)$ ranges on all pairs of
non-trivial paths starting in $p$ and ending in a common vertex.
There are $f_1,f_2\in \Sigma (p)$ such that $f_1(z_1xz_2)f_2$ can be
written as $y_1+y_2$ with $y_1=\sum _{i=1}^s \lambda _i\got m _i\in
Q_K(\mathbb P)_{(p,p)}\setminus \{0\}$ and $y_2=\sum _{u=1}^{n_p}
\sum _{w=0}^{M_u}\alpha _{p,q_u}^w\beta_{p,q_u}y_{uw}$
 for some $y_{uw}\in Q_K(\mathbb P)$.

 It is enough to show that $\tau(f_1z_1xz_2f_2)\ne 0$.
 Take a natural number $N$ such that
 $N$ is bigger than $M_1$ and such that $N-n_i(1)\ge 0$ for all $i$, where $n_i(1)$
 is the exponent of $z_1$ in the monomial $\got m _i(z)$.
For a nonzero $v\in V(q_1)$, we consider the element $v\otimes
z_1^N$ in $V(q_1)\otimes_L L_1$, and we compute
$$(v\otimes z_1^N)\tau(f_1z_1xz_2f_2)=(v\otimes z_1^N)\tau(y_1)
+(v\otimes z_1^N)\tau (y_2)= v\otimes (\sum _{i=1}^s \lambda _i
z_1^N\ol{\got m}_i(z))\ne 0.$$ This shows that $\tau$ is faithful.

If $A$ is a lower subset of $\mathbb P$, then the map
$\psi_A\colon Q_K(A)\to e(A)Q_K(\mathbb P)e(A)$ is surjective by
Lemma \ref{normalform}. Since  $\tau _A(x)=\tau _{\mathbb P}(\psi
_A(x))_{|V(A)}$ for $x\in Q_K(A)$, and $\tau _A$ is faithful, we
conclude that $\psi _A$ is injective, and thus $\psi _A$ is an
isomorphism.
\end{proof}

The distributive lattice $\mathcal L (\mathbb P)$ of lower subsets
of $\mathbb P$, which agrees with the lattice $\mathcal L (M)$ of
order-ideals of $M=M(\mathbb P)$, can be seen now as a {\it
sublattice} of $\mathcal L (Q_K(\mathbb P))$.

\begin{prop}
\label{prop:ideallattice} Let $M$ be a finitely generated primitive
monoid such that all primes of $M$ are free. Let $A\in\mathcal L
(\mathbb P (M))$ be a lower subset of $\mathbb P (M)$ and consider
the idempotent $e(A)=\sum _{q\in A}e(q)\in Q_K(\mathbb P)$. Then the
following properties hold:
\begin{enumerate}
\item Let $I(A)$ be the ideal of $Q_K(\mathbb P)$ generated by $e(A)$.
Then $$I(A)=\bigoplus _{(\gamma _1,\gamma _2)\in \mathcal T
^2:r(\gamma _1)=r(\gamma _2)\in A}Q_K(\mathbb P)_{(\gamma _1,\gamma
_2)}.$$
\item The assignment $$A\mapsto I(A)$$
defines an injective lattice homomorphism from $\mathcal L (\mathbb
P)$ into $\mathcal L (Q_K(\mathbb P))$.
\end{enumerate}
\end{prop}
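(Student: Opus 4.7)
The plan is to establish (1) first and deduce (2) as a routine consequence. For (1), let $J$ denote the candidate direct sum on the right-hand side; I will show $J = I(A)$ by proving both inclusions separately.

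The inclusion $J \subseteq I(A)$ is the easier direction. Using the normal form of Lemma~\ref{normalform}, every element of $Q_K(\mathbb P)_{(\gamma_1,\gamma_2)}$ with middle vertex $v = r(\gamma_1) = r(\gamma_2)$ is a linear combination of terms (*) which factor as $x \cdot e(v) \cdot y$: one inserts $e(v) = e(p_u)$ between the left half (which ends in $\beta_{p_{u-1},p_u}$ and absorbs $e(v)$ on the right by~\eqref{eq:A.10}) and the middle factor $\got{m}(\alpha_{p_u,\cdot})(f_u)^{-1} \in e(v) Q_K(\mathbb P) e(v)$. Since $v \in A$, each such term lies in the ideal generated by $e(v)$, hence in $I(A)$.

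For the reverse inclusion $I(A) \subseteq J$, since $e(q) \in Q_K(\mathbb P)_{(\epsilon_q,\epsilon_q)} \subseteq J$ for each $q \in A$ (where $\epsilon_q$ is the trivial path at $q$), it suffices to show that $J$ is a two-sided ideal. This is the crux of the argument. Given $a \in Q_K(\mathbb P)_{(\gamma_1,\gamma_2)}$ with $v := r(\gamma_1) = r(\gamma_2) \in A$ and $b \in Q_K(\mathbb P)_{(\mu,\mu')}$, I would analyze the product $ab$ at the junction where the backward traversal of $\gamma_2$ meets the forward traversal of $\mu$. If $s(\gamma_2) \neq s(\mu)$ the product vanishes; otherwise, successive pairs $\ol{\beta}_{u,u'}\beta_{u,u''}$ appear at the junction, and by Lemma~\ref{lem:techs}(4) they annihilate when $u' \neq u''$, while by Lemma~\ref{lem:techs}(3) matching pairs reduce to a scalar times $e(u')$. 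Iterating this cancellation from the common top $s(\gamma_2) = s(\mu)$ downward, either the paths diverge (giving zero) or one is a prefix of the other. In the former prefix case ($\gamma_2$ a prefix of $\mu$), the new middle vertex is $r(\mu)$, reached from $v$ by the descending tail of $\mu$, so $r(\mu) \leq v$; in the other case ($\mu$ a prefix of $\gamma_2$, or equal), the new middle vertex is $v$ itself. In either case the new middle vertex is $\leq v$, and since $A$ is a lower subset with $v \in A$, we conclude $ab \in J$. The symmetric argument handles $ba \in J$.

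Part (2) follows quickly from (1). For injectivity, Theorem~\ref{theor:repres} guarantees $e(q) \neq 0$ for every $q \in \mathbb P$: if $I(A) = I(A')$ and $q \in A \setminus A'$, then $e(q) \in I(A')$, and since $e(q) \in Q_K(\mathbb P)_{(\epsilon_q,\epsilon_q)}$ with $q \notin A'$, the $(\epsilon_q,\epsilon_q)$-component of $e(q)$ (which is $e(q)$ itself) must vanish by the direct-sum decomposition in (1), a contradiction. Preservation of joins, $I(A \cup A') = I(A) + I(A')$, is immediate from the generating sets $\{e(q) : q \in A \cup A'\}$. Preservation of meets, $I(A \cap A') = I(A) \cap I(A')$, follows directly from the direct-sum description in (1), since the intersection of two such subsums is indexed by the intersection of the index sets. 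The main obstacle is the two-sided-ideal argument in (1): it requires combining the normal form, the cancellation identities of Lemma~\ref{lem:techs}, and the downward-directed structure of $\mathcal T(\mathbb P)$, so that the lower-subset hypothesis on $A$ is exactly what accommodates the possible descent of the middle vertex under multiplication.
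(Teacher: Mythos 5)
Your proof is correct and follows the same route as the paper, which simply declares part (1) ``clear from Lemma \ref{normalform}'' and derives part (2) from the resulting decomposition $I(A)=\bigoplus_{a\in A}Q_a$ exactly as you do (with injectivity resting, as you note, on $e(q)\neq 0$ from Theorem \ref{theor:repres}). Your junction analysis --- cancellation of mismatched $\ol{\beta}\,\beta$ pairs via Lemma \ref{lem:techs}(3),(4) and the observation that the middle vertex can only descend, so the lower-subset hypothesis closes the index set under multiplication --- is precisely the content the paper leaves implicit.
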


\begin{proof}
(1) This is clear from Lemma \ref{normalform}.

(2) It is clear from (1) that the map $\mathcal L (\mathbb P)\to
\mathcal L (Q_K(\mathbb P))$ is injective. Observe that
$$I(A)=\oplus_{a\in A} Q_a,$$
where $Q_a=\bigoplus _{(\gamma _1,\gamma _2)\in \mathcal T
^2:r(\gamma _1)=r(\gamma _2)=a} Q_K(\mathbb P)_{(\gamma _1,\gamma
_2)}$. It follows that $I(A)+I(B)=I(A\cup B)$ and $I(A)\cap
I(B)=I(A\cap B)$. Thus the above map is a lattice homomorphism.
\end{proof}

\begin{remark}
\label{cautionaryrem} It follows from Theorem \ref{main} and
Propositions \ref{prop:wellknownideals} and \ref{wellknownlattice}
that the map $\mathcal L (\mathbb P)\to \mathcal L (Q_K(\mathbb P))$
of Corollary \ref{prop:ideallattice}(2) is indeed a lattice
isomorphism.
\end{remark}

\begin{remark}
\label{remark:quotients} If $A$ is a lower subset of $\mathbb P$,
then $Q_K(M(\mathbb P))/I(A)$ is not in general isomorphic with
$Q_K(M(\mathbb P\setminus A))$. However the structure of
$Q_K(M(\mathbb P))/I(A)$ is clear from the presentation given in
Definition \ref{defi: Q(M)}. Indeed the generators and relations for
$Q_K(M(\mathbb P))/I(A)$ are the same as in \ref{defi: Q(M)} for all
the primes $p\in \mathbb P\setminus A$ such that $\rL(p)\cap
A=\emptyset$, and if $p\in \mathbb P\setminus A$ and $\rL(p)\cap
A\ne \emptyset$, then one has to add additional relations
$e(p,q)=\beta _{p,q}=0$ for $q\in \rL(p)\cap A$. Of course we can
omit in the presentation all idempotents $e(p)$ with $p\in A$,
because these idempotents are $0$ in $Q_K(M(\mathbb P))/I(A)$.
\end{remark}

\section{Pullbacks}
\label{sect:pullbacks}

In this section we develop part of the machinery used in the proof
of our main result, concerning pullbacks of von Neumann regular
rings. It is important to have in mind that, although  a pullback of
regular rings is always regular, the corresponding diagram at the
level of monoids of f.g. projectives might not be a pullback in the
category of monoids. We give necessary and sufficient conditions in
order for this property to hold in $K$-theoretic terms.

\begin{prop}
\label{prop:regs} Let $Q_1$ and $Q_2$ be two von Neumann regular
(resp. exchange) rings and let $\pi _i\colon Q_i\to R$ be surjective
homomorphisms. Consider the pullback
$$
\begin{CD}
P @>\rho _1>> Q_1\\
   @V\rho_2 VV  @VV\pi _1V\\
Q_2 @>\pi_2>> R
\end{CD}
$$
in the category of rings. Then  $R$ and $P$ are von Neumann
regular (resp. exchange) rings. If $Q_1$ and $Q_2$ are (strongly)
separative then $R$ and $P$ are (strongly) separative too.
\end{prop}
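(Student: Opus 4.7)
The overall plan is to extract from the pullback a short exact sequence $0 \to \ker(\rho_1) \to P \to Q_1 \to 0$, in which $\ker(\rho_1)$ is naturally identified (via the second coordinate) with the ideal $\ker(\pi_2) \triangleleft Q_2$, using surjectivity of $\pi_2$. This reduces the claims about $P$ to an ideal/quotient analysis. The claims about $R$ are the easy half: $R$ is a surjective image of $Q_1$ under $\pi_1$, so it inherits von Neumann regularity (resp. the exchange property) by the standard lift-and-push argument (take $a \in R$, lift to $\tilde a \in Q_1$, take $\tilde b$ with $\tilde a = \tilde a \tilde b \tilde a$, then push down). For (strong) separativity of $R$ one would use Proposition~\ref{prop:wellknownideals} to write $\mon{R} \cong \mon{Q_1}/\mon{\ker(\pi_1)}$ and then invoke the fact that (strong) separativity descends to order-ideal quotients of refinement monoids.

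For regularity of $P$, the plan is a direct construction of quasi-inverses. Given $(x,y) \in P$, pick quasi-inverses $x' \in Q_1$ of $x$ and $y'' \in Q_2$ of $y$. The images $\pi_1(x')$ and $\pi_2(y'')$ are both quasi-inverses of the common element $r := \pi_1(x) = \pi_2(y) \in R$, so their difference $c := \pi_1(x') - \pi_2(y'')$ satisfies $rcr = 0$. Lifting $c$ to $\tilde c \in Q_2$ gives $y \tilde c y \in \ker(\pi_2)$, and since $\ker(\pi_2)$ is an ideal in the regular ring $Q_2$ it is itself von Neumann regular, providing $u \in \ker(\pi_2)$ with $y \tilde c y = y \tilde c y \cdot u \cdot y \tilde c y$. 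Setting $y' := y'' + \tilde c - \tilde c y u y \tilde c$, one checks $y y' y = y$ and $\pi_2(y') = \pi_1(x')$ (the latter because $\pi_2(u) = 0$), so $(x', y') \in P$ is a quasi-inverse of $(x,y)$. For the exchange case the simplest route is to invoke the general principle that a ring with an ideal $I$ such that both $I$ and $P/I$ are exchange is itself exchange, the required idempotent-lifting modulo $I$ being automatic when $I$ is exchange.

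For (strong) separativity of $P$, one applies \cite[Theorem~4.2]{AGOP} and its strongly-separative analogue: an exchange ring with a (strongly) separative ideal and a (strongly) separative quotient is (strongly) separative. Here $\ker(\rho_1) \cong \ker(\pi_2)$ has $\mon{\ker(\pi_2)}$ sitting as an order-ideal in the (strongly) separative monoid $\mon{Q_2}$, hence inherits the property, and $P/\ker(\rho_1) \cong Q_1$ is (strongly) separative by hypothesis. The main obstacle I expect is the quasi-inverse matching in the regularity argument for $P$: one has two candidate quasi-inverses of $r$ coming from opposite sides of the pullback, and the whole trick is that their discrepancy $c$ can be absorbed inside the regular ideal $\ker(\pi_2)$ by a one-step quasi-inversion, without disturbing either the identity $y y' y = y$ or the compatibility condition defining membership in $P$.
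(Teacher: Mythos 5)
Your overall architecture coincides with the paper's: both proofs rest on the short exact sequence $0\to I_2\to P\to Q_1\to 0$, where $I_2=\ker(\pi_2)$ is identified with $\ker(\rho_1)$ via the second coordinate, and the (strong) separativity claims are handled the same way (quotients of separative exchange rings for $R$, and the extension theorem \cite[Theorems 4.5 and 5.2]{AGOP} for $P$ --- note the relevant result is Theorem~4.5 there, not 4.2). Your regularity argument for $P$ is correct and is a nice self-contained element computation; the paper instead simply quotes \cite[Lemma 1.3]{vnrr} (an extension of regular by regular is regular), which encapsulates the same idea. Either route is fine, and your direct verification of $\pi_2(y')=\pi_1(x')$ using $\pi_2(u)=0$ is exactly the point that makes the quasi-inverse land back in $P$.

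The one genuine gap is in the exchange case. You invoke ``the general principle that a ring with an ideal $I$ such that both $I$ and $P/I$ are exchange is itself exchange, the required idempotent-lifting modulo $I$ being automatic when $I$ is exchange.'' That is not what the extension theorem says: \cite[Theorem 2.2]{Aext} characterizes exchange rings as those rings possessing an ideal $I$ with $I$ and $R/I$ exchange \emph{and} idempotents liftable modulo $I$, and the lifting hypothesis is not known to be redundant for exchange ideals --- it is precisely the condition one must verify, which is why the paper does not treat it as automatic. Fortunately, in the pullback situation the verification is immediate, and this is what the paper does: given an idempotent $e_1\in Q_1\cong P/\ker(\rho_1)$, the element $\pi_1(e_1)$ is an idempotent of $R\cong Q_2/I_2$; since $Q_2$ is an exchange ring, idempotents lift modulo its ideal $I_2$, so there is an idempotent $e_2\in Q_2$ with $\pi_2(e_2)=\pi_1(e_1)$, and then $(e_1,e_2)$ is an idempotent of $P$ lifting $e_1$. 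With that two-line verification inserted in place of the claimed automaticity, your argument for the exchange case (and hence for the separativity of $P$, which presupposes $P$ exchange) closes.
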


\begin{proof} Since $Q_1$ is a von Neumann regular (resp.
exchange) ring and $\pi _1$ is surjective, $R$ is von Neumann
regular (resp. exchange) (\cite[Lemma 1.3]{vnrr},
\cite[Proposition 1.4]{Nicholson}). Let $I_i\lhd Q_i$ be the
kernel of $\pi _i$, $i=1,2$. We have a commutative diagram with
exact rows
$$
\begin{CD}
0 @>>> I_2 @>>> P @>\rho _1>> Q_1 @>>> 0 \\
& & @V=VV  @V\rho_2 VV  @VV\pi _1V  \\
0 @>>> I_2  @>>> Q_2 @>\pi_2>> R @>>> 0
\end{CD}
$$
Assume that $Q_1$ and $Q_2$ are von Neumann regular. Then $I_2$
and $Q_1$ are von Neumann regular, and it follows from \cite[Lemma
1.3]{vnrr} that $P$ is also von Neumann regular. If $Q_1$ and
$Q_2$ are exchange rings then $I_2$ and $Q_1$ are also exchange
rings (see \cite[Theorem 2.2]{Aext}), and idempotents of $Q_1$ can
be lifted to idempotents of $P$, because, being $Q_2$ an exchange
ring, idempotents in $R$ can be lifted to idempotents in $Q_2$. It
follows from \cite[Theorem 2.2]{Aext} that $P$ is an exchange
ring.

If $Q_1$ and $Q_2$ are (strongly) separative then $R$ is
(strongly) separative because it is a factor ring of a (strongly)
separative exchange ring. Also \cite[Theorem 4.5]{AGOP}
 (\cite[Theorem 5.2]{AGOP}) shows that $P$ is (strongly) separative.
\end{proof}

Let $Q_1,\dots ,Q_k$ be rings, and let $\pi_i\colon Q_i\to R$ be
surjective homomorphisms. Let $\rho _i\colon P\to Q_i$ be the
limit (pullback) of the morphisms $\pi_i\colon Q_i\to R$. We have
$k$ index maps $\partial _i\colon K_1(R)\to K_0(I_i)$ for
$i=1,\dots ,k$, where $I_i$ is the kernel of $\pi _i$. If
$e=(e_1,\dots ,e_k)$ is an idempotent in $P$, then we get
corresponding maps $\partial _i\colon K_1(\pi _i(e_i)R\pi
_i(e_i))\to K_0(e_iI_ie_i)$. These maps fit into an exact sequence
$$
\begin{CD}
K_1(e_iQ_ie_i) @>(\pi _i)_*>> K_1(\pi _i(e_i)R\pi_i(e_i))
@>\partial _i>> K_0(e_iI_ie_i) @>>> K_0(e_iQ_ie_i).
\end{CD}
$$
We are now ready to state the main result of this section.
 Note that
the $K$-theoretic part of the result below might be considered as a
nonstable version of \cite[Theorem 3.3]{Milnor}.

\begin{theor}
\label{pullbacktheorem} Let $Q_1,\dots ,Q_k$ be (strongly)
separative von Neumann regular (resp. exchange) rings, and let
$\pi_i\colon Q_i\to R$ be surjective homomorphisms. Let $\rho
_i\colon P\to Q_i$ be the limit (pullback) of the morphisms
$\pi_i\colon Q_i\to R$. Then $P$ is a (strongly) separative von
Neumann regular (resp. exchange) ring, and the maps $\mon{\rho
_i}\colon \mon{P}\to \mon{Q_i}$ are the limit of the family of maps
$\mon{\pi _i}\colon \mon{Q_i}\to \mon{R}$ in the category of monoids
if and only if for each idempotent $e=(e_1,\dots ,e_k)$ in $P$, we
have that for every $i=1,\dots ,k$, \begin{equation}
\label{eq:pullback} K_1(\pi _i(e_i)R\pi
_i(e_i))=(\pi_i)_*(K_1(e_iQ_ie_i))+\big(\bigcap _{j\ne
i}(\pi_j)_*(K_1(e_jQ_je_j))\big).
\end{equation} It is enough to check the
above condition for some generator $e=(e_1,\dots ,e_k)$ of each
finitely generated trace ideal of $P$.
\end{theor}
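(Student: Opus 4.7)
The plan is to address the two assertions of the theorem in turn, saving the trace-ideal reduction for the end. For the structural part, I would realise $P$ iteratively as a two-fold pullback $Q_1\times_R P'$ with $P'=Q_2\times_R\cdots\times_R Q_k$ itself a pullback of the same shape, carrying a canonical surjection onto $R$; induction on $k$, together with Proposition \ref{prop:regs}, then gives that $P$ is (strongly) separative von Neumann regular (resp.\ exchange). For surjectivity of the natural map $\mon{P}\to\lim_i\mon{Q_i}$, which does not use the $K_1$ hypothesis, I would take a compatible family $([e_i])_i$, embed all the $e_i$ in a common $M_n$, fix an idempotent $\ol{f}\in M_n(R)$ equivalent to each $\pi_i(e_i)$, and then use the exchange property of $Q_i$ together with separativity to replace $e_i$ by an equivalent idempotent $e_i'\in M_n(Q_i)$ with $\pi_i(e_i')=\ol{f}$. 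The tuple $(e_1',\dots,e_k')\in M_n(P)$ is the required lift.

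The heart of the proof is the equivalence between injectivity of this monoid map and the displayed $K_1$-condition. For the implication from the $K_1$-condition to injectivity, suppose $f,h\in M_n(P)$ satisfy $[\rho_i(f)]=[\rho_i(h)]$ in $\mon{Q_i}$ for each $i$, and pick equivalences $x_i,y_i\in M_n(Q_i)$ between $f_i:=\rho_i(f)$ and $h_i:=\rho_i(h)$. Writing $\ol{f}=\pi_i(f_i)$ and $\ol{h}=\pi_i(h_i)$ (well-defined since $f,h$ lie in $P$), the units $\theta_i:=\pi_1(x_1)\pi_i(x_i)^{-1}\in\uni{\ol{h}M_n(R)\ol{h}}$ carry classes in $K_1(\ol{h}R\ol{h})$ that measure the failure of the various $\pi_i(x_i)$ to coincide. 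After suitable stabilisation, I would feed the hypothesised decomposition
\[
K_1(\ol{h}R\ol{h})=(\pi_i)_*K_1(h_iQ_ih_i)+\bigcap_{j\ne i}(\pi_j)_*K_1(h_jQ_jh_j)
\]
into an inductive adjustment procedure, modifying $x_i$ by a unit $v_i\in\uni{h_iM_n(Q_i)h_i}$ so that the images $\pi_i(x_iv_i)$ become simultaneously aligned on $R$; the intersection term is precisely what allows one to correct at index $i$ without spoiling the alignments already arranged at the remaining indices. Running the same procedure on the $y_i$ turns $(x_iv_i)_i$ and $(v_i^{-1}y_i)_i$ into genuine elements of $M_n(P)$ witnessing $[f]=[h]$ in $\mon{P}$.

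For the converse direction, I would run a Milnor-style clutching construction: given an idempotent $e=(e_1,\dots,e_k)\in P$ and a unit $u\in\gl[n]{\ol{e}R\ol{e}}$, assemble an idempotent $\widetilde{e}\in M_{2n}(P)$ whose $Q_i$-components are the standard $\mathrm{diag}(e_i,0)$ but whose $R$-level transition is twisted by $u$ at the $i$-th slot. The assumption that $\mon{P}\to\lim_j\mon{Q_j}$ is an isomorphism then forces $[\widetilde{e}]$ to coincide with the class of the untwisted idempotent in $\mon{P}$, and unpacking this equality via the standard Mayer--Vietoris picture delivers the required decomposition $[u]=(\pi_i)_*[v_i]+\eta$ with $\eta$ belonging simultaneously to every $(\pi_j)_*K_1(e_jQ_je_j)$ for $j\ne i$. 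I expect this clutching step to be the main technical obstacle, since the intersection piece of the condition has to emerge intrinsically from the single construction rather than piece-by-piece for each $j$. Finally, the last assertion --- that it suffices to verify the $K_1$ condition on one generator of each finitely generated trace ideal of $P$ --- should follow from a Morita-type transfer: two idempotents generating the same finitely generated trace ideal have corners that become stably Morita equivalent compatibly with the surjections $\pi_i$, and this equivalence identifies the relevant $K_1$ groups together with the maps $(\pi_i)_*$ and their intersections, whence the condition transfers automatically.
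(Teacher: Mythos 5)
Your architecture coincides with the paper's: iterated two-fold pullbacks plus Proposition \ref{prop:regs} for the structural part, a separate surjectivity argument for $\mon{P}\to\lim_i\mon{Q_i}$, extraction of the $K_1$ condition from a pair of idempotents with equal image in the limit monoid, and an inductive adjustment of equivalences for the converse. Two points need attention. A minor one first: in the surjectivity step you cannot in general replace $e_i$ by an \emph{equivalent} idempotent $e_i'$ with $\pi_i(e_i')$ literally equal to the chosen $\overline{f}$. The paper instead uses \cite[Lemma 3.1(a)]{Aext} to produce an idempotent $q$ with $\pi_k(q)=\pi_1(f_1)$ and $q\sim e_k-e_k'$ for some idempotent $e_k'$ in the kernel ideal, and then takes $q\oplus e_k'$; as stated, your lifting claim is too strong, though the repair is routine.

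The substantive gap is in the implication from condition (\ref{eq:pullback}) to injectivity. Your ``inductive adjustment procedure'' must convert an identity in $K_1$ of a corner of $R$ --- which holds only after stabilization and modulo elementary matrices --- into honest units of the corners $h_iM_n(Q_i)h_i$ whose images align the $\pi_i(x_i)$. Two nontrivial inputs are required and are absent from your sketch: (i) for separative exchange rings, every class in $K_1$ of a corner is represented by an actual unit of that corner (\cite{AGOR}); this is what realizes the summand lying in $(\pi_k)_*(K_1(e_kQ_ke_k))$ by a genuine unit $v$ of $e_kQ_ke_k$; and (ii) Perera's lifting theorem \cite[Theorem 2.4]{Perera}, which says that a unit of the quotient whose $K_1$ class has vanishing index maps $\partial_i$ lifts to a unit of the (partial) pullback; this is what turns membership of the residual class in $\bigcap_{j\ne i}(\pi_j)_*(K_1(\cdot))$ into a unit $u$ of the corner of $P'$ with the prescribed image, after which pairs such as $(u^{-1}z,v)$ and $(tu,v^{-1})$ witness the equivalence in $P$. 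Without (i) and (ii) the passage from a $K_1$ equality to an equivalence of idempotents does not go through, and it is exactly here that separativity enters the injectivity argument --- your sketch never locates where separativity is used. You also omit the preliminary reduction, by lifting elementary matrices through the surjections $\pi_i$, to the case where the $k$-th components of the two idempotents coincide, which is what sets up the induction on $k$. The forward implication and the Morita/trace-ideal reduction at the end are essentially as in the paper (the paper avoids explicit clutching by lifting $u$ to a von Neumann regular pair $(x,y)$ and comparing the idempotents $(yx,1,\dots,1)$ and $(xy,1,\dots,1)$, which is where the intersection term appears without extra work).
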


\begin{proof}
The first part follows easily by induction from Proposition
\ref{prop:regs}.

Assume now that $Q_1, \dots ,Q_k$ are just exchange rings. (The
separativity will be used later in the proof.) Let $\tau _i\colon
M\to \mon{Q_i}$ be the limit of the family of maps $\mon{\pi
_i}\colon \mon{Q_i}\to \mon{R}$ in the category of monoids. Recall
that
$$M=\{(x_1,\dots ,x_n)\in \prod \mon{Q_i}: \mon{\pi_i}(x_i)=\mon{\pi _j}(x_j),
 \text{ for all } i,j\}.$$
Obviously we have a unique monoid homomorphism $\rho\colon
\mon{P}\to M$ such that $\tau _i\circ \rho =\mon{\rho _i}$ for all
$i$. We show that the map $\rho$ is always surjective. (We don't
need the extra condition in the statement to prove this.
Indeed, by the results in \cite[$\S 2$]{Milnor}, the surjectivity
holds even without the hypothesis that the rings $Q_i$ are exchange
rings.)
Let $P'$ be the limit of the family of maps $\pi_i\colon Q_i\to R$,
for $1\le i\le k-1$, and let $M'$ be the limit of the family
$\mon{\pi _i}\colon \mon{Q_i}\to \mon{R}$, $1\le i\le k-1$. Let
$\rho '\colon \mon{P'}\to M'$ be the canonical map. Assume that
$([e_1],[e_2],\dots ,[e_k])$ is an element in $M$, with $e_i\in
\text{Idem}(M_n(Q_i))$ for some $n\ge 1$. By induction, there is
$f\in M_m(P')$, for some $m\ge n$ such that
$$\rho '([f])=([e_1],\dots ,[e_{k-1}])\in M'.$$

Replacing each $e_i$ with $e_i\oplus 0_{m-n}$, we can assume that
all $e_i\in M_m(Q_i)$ for all $i$. Observe that
$$\pi _k(e_k)\sim \pi _1(f_1)=\cdots =\pi_{k-1}(f_{k-1}),$$
where $f=(f_1,\dots ,f_{k-1})$. By \cite[Lemma 3.1(a)]{Aext},
there exists an idempotent $q$ in $M_m(Q_k)$ such that $q\sim
e_k-e_k'$ for some idempotent $e_k'\in e_kM_m(I_k)e_k$, and $\pi
_k(q)=\pi _1(f_1)$. Consider the idempotent
$$e=(f_1\oplus 0,\dots ,f_{k-1}\oplus 0, q\oplus e_k')\in \prod M_{2m}(Q_i).$$
Then $\pi _1(f_1\oplus 0)=\cdots = \pi_{k-1}(f_{k-1}\oplus 0)=\pi
_k(q\oplus e_k')$ so that $e\in M_{2m}(P)$ and clearly $\rho
([e])=([e_1],\dots ,[e_k])$, showing that $\rho$ is surjective.

Assume first that the maps $\mon{\rho _i}\colon \mon{P}\to
\mon{Q_i}$ are the limit of the family of maps $\mon{\pi _i}\colon
\mon{Q_i}\to \mon{R}$ in the category of monoids, and let us show
(\ref{eq:pullback}). This implication does not use separativity.

Assume that $[u]\in K_1(R)$, with $u\in GL_n(R)$ for some $n\ge
1$. Then $\partial _1 ([u])=[1-yx]-[1-xy]$, where $x,y\in
M_n(Q_1)$ and $x=xyx$, $y=yxy$, and $\pi _1 (x)=u$. (Such a
lifting of $u$ exists by \cite[Lemma 2.1]{Aext}.) Put $e_1=yx$ and
$e_1'=xy$. Then we have $\pi _1(e_1)=\pi _1(e_1')=1_n$. Consider
the idempotents $(e_1,1_n,\dots ,1_n)$ and $(e_1',1_n,\dots ,1_n)$
in $M_n(P)$, where $1_n$ is the identity matrix of size $n\times
n$. Clearly
$$\rho ([(e_1,1_n,\dots ,1_n)])=\rho ([(e_1',1_n,\dots ,1_n)])\in M\, . $$
Since $\rho $ is an isomorphism,  we get $(e_1,1_n,\dots ,
1_n)\sim (e_1',1_n,\dots ,1_n)$ in $M_n(P)$. We get elements
$$(y_1,z_2,\dots ,z_k)\in (e_1,1_n,\dots,1_n)P(e_1',1_n,\dots
,1_n)$$ and $$(x_1,t_2,\dots ,t_k)\in (e_1',1_n,\dots
,1_n)P(e_1,1_n,\dots ,1_n)$$ such that $$(y_1,z_2,\dots,
z_k)(x_1,t_2,\dots ,t_k)=(e_1,1_n,\dots ,1_n)$$ and
$$(x_1,t_2,\dots ,t_k)(y_1,z_2,\dots ,z_k)=(e_1',1_n,\dots ,1_n).$$
So we get that $t_i\in GL_n(Q_i)$ and $\pi _1(x_1)=\pi_i(t_i)$ for
all $i\ge 2$, so that $$[\pi _1(x_1)]\in \bigcap _{i\ne 1}(\pi
_i)_*(K_1(Q_i)).$$ Furthermore
$$\partial _1 ( [\pi _1(x_1)])=
[1-y_1x_1]-[1-x_1y_1]=[1-e_1]-[1-e_1']=\partial _1 ([u]).$$ By
exactness of the $K$-theory sequence, we get that $[u]-[\pi
_1(x_1)]\in (\pi_1)_*(K_1(Q_1))$, so that $[u]\in (\pi
_1)_*(K_1(Q_1))+\bigcap _{i\ne 1}(\pi _i) _*(K_1(Q_i))$. This
shows that
$$K_1(R)= (\pi _1)_*(K_1(Q_1))+\bigcap _{i\ne 1}(\pi _i)
_*(K_1(Q_i)).$$

If $e=(e_1,\dots ,e_k)$ is an idempotent in $P$ then the maps ${\rho
_i}_| \colon ePe\to e_iQ_ie_i$ give the limit of the family of maps
${\pi _i}_|\colon e_iQ_ie_i\to \pi _i(e_i)R\pi _i (e_i)$, and the
maps $\mon{{\rho_i}_|}\colon \mon{ePe}\to \mon{e_iQ_ie_i}$ give the
limit of the family $\mon{{\pi _i}_|}\colon \mon{e_iQ_ie_i}\to
\mon{\pi _i(e_i)R\pi _i(e_i)}$ in the category of monoids. (Observe
that, if $e$ is an idempotent in a general ring $T$, then we can
identify $\mon{eTe}$ with the order-ideal $\mon{TeT}$ of $\mon{T}$
consisting of classes $[g]\in \mon{R}$ such that $g\in
M_{\infty}(TeT)$; see \cite[proof of Lemma 7.3]{AF}.) By the above
argument, we get (\ref{eq:pullback}).

Now assume that (\ref{eq:pullback}) holds for every idempotent
$e=(e_1,\dots ,e_k)$ in $P$ and every index $i$. Assume that
$Q_1,\dots ,Q_k$ are separative exchange rings. Since $\rho\colon
\mon{P}\to M$ is always surjective, we only have to show that it is
injective.  Let $(e_1,\dots ,e_k)$ and $(e_1',\dots ,e_k')$ be
idempotents in $M_n(P)$ for some $n\ge 1$ such that $e_i\sim e_i'$
in $M_n(Q_i)$ for all $i$. By using standard arguments, and passing
to matrices of bigger size, we can assume that
$e_k=u_ke_k'u_k^{-1}$, where $u_k\in E_m(Q_k)$, the group of
elementary matrices of size $m\times m$ for some $m\ge n$, see
\cite[ 1.2.1, 2.1.3]{Rosenberg}. Now since all elementary matrices
lift, there is $u_i\in E_m(Q_i)$ such that $\pi _i(u_i)=\pi
_k(u_k)$, so that $u:=(u_1,\dots ,u_{k-1},u_k)\in GL_m(P)$ and
$u(e_1',\dots ,e_k')u^{-1}=(e_1'',\dots ,e_{k-1}'',e_k)$, so we can
assume that $e_k=e_k'$.

Thus assume we have two idempotents $(e_1,\dots ,e_{k-1},e_k)$ and
$(e_1',\dots ,e'_{k-1}, e_k)$ in $M_n(P)$ such that $e_i\sim e_i'$
in $M_n(Q_i)$ for $i=1,\dots ,k-1$. We have $\pi_i (e_i)=\pi
_k(e_k)=\pi _j(e_j')$ for all $i,j$. Replacing each ring $T$ in
the diagram with $M_n(T)$, we can assume that $n=1$. (The validity
of (\ref{eq:pullback}) for idempotents in $M_n(P)$ is indeed
justified by the last sentence in the statement: the trace ideal
of $P$ generated by an idempotent $E$ in $M_n(P)$ is generated by
a single idempotent of $P$,  see the last paragraph in the proof.)
By using induction, we can write $(e_1,\dots ,e_{k-1})=xy$ and
$(e_1',\dots ,e_{k-1}')=yx$ with $x\in (e_1,\dots
,e_{k-1})P'(e_1',\dots ,e_{k-1}')$ and $y\in (e_1',\dots
,e_{k-1}')P'(e_1,\dots ,e_{k-1})$, where $P'$ is the limit of the
family of maps $\pi _i\colon Q_i\to R$, $1\le i\le k-1$. We have
$$\pi _1(x_1)\pi_1 (y_1)=\pi _1(e_1)=\pi _1(e_1')=\pi _1(y_1)\pi _1(x_1)\, ,$$
so that $\pi _1(x_1)=\pi _i(x_i)\in GL_1(\pi_1(e_1)R\pi _1(e_1))$,
for $1\le i\le k-1$,  and $\pi _1(y_1)=\pi_1(x_1)^{-1}$. Take
$z_i,t_i\in e_iQ_ie_i$, $1\le i\le k-1$,  such that
$z_i=z_it_iz_i$ and $t_i=t_iz_it_i$, with $\pi _i(z_i)=\pi
_i(x_i)$, so that $\pi _i(t_i)=\pi _i(y_i)$. Set $z=(z_1,\dots
,z_{k-1})$ and $t=(t_1,\dots ,t_{k-1})$, and note that $z,t\in
fP'f$, where $f=(e_1,\dots ,e_{k-1})$. Put $f'=(e_1',\dots
,e_{k-1}')\in P'$.
 Write $h:=zt$ and $h':=tz$. Then $h,h'\in fP'f$ and
$$(f, e_k)=(h,e_k)+(f-h,0)$$
with $f-h\in \prod_{i=1}^{k-1} I_i$. On the other hand,
$$(f',e_k)=(yhx, e_k)+(f'-yhx, 0),$$
and clearly $(f-h,0)\sim (f'-yhx,0)$ in $P$, so it suffices to
check that $(h,e_k)\sim (yhx,e_k)$ in $P$. Let $\pi'\colon P'\to
R$ be the canonical map. Since $\pi'(yz)=\pi'(tx)=\pi _k(e_k)$, we
get
$$(yhx, e_k)=(yz,e_k)
(h',e_k)(tx,e_k)$$ so that $(yhx, e_k)\sim (h',e_k)$ in $P$.

Therefore we only have to show that $(h,e_k)\sim (h',e_k)$ in $P$.
By applying our hypothesis (with the idempotent $(h,e_k)$), we can
write $[\pi '(z)]=\alpha +\beta$ in $K_1(\pi'(h)R\pi '(h))$, where
$\alpha\in (\pi _k)_*(K_1(e_kQ_ke_k))$ and $\beta\in \bigcap
_{i=1}^{k-1} (\pi _i)_*(K_1(h_iQ_ih_i))$.

By \cite{AGOR}, we can choose $v$ in $GL_1(e_kQ_ke_k)$ such that
$(\pi _k)_*([v])=\alpha$, so that $[\pi '(z)\pi_k(v)^{-1}]=\beta$.
Since $\partial _i(\beta )=0$ for $1\le i\le k-1$, it follows from
\cite[Theorem 2.4]{Perera} that there exists a unit $u\in
GL_1(hP'h)$ such that $\pi'(u)=\pi '(z)\pi _k(v)^{-1}$, so that
$(u^{-1}z,v)\in P$ and
$$(u^{-1}z,v)(h',e_k)(tu,v^{-1})=(u^{-1}zh'tu,ve_kv^{-1})=(u^{-1}hu,e_k)=(h,e_k)$$
and $$(tu,v^{-1})(h,e_k)(u^{-1}z,v)=(h',e_k)$$ and we get that
$(h',e_k)\sim (h,e_k)$ in $P$, as desired.

The last sentence in the statement of the theorem follows from
Morita invariance of $K_1$. Observe that, being $P$ an exchange
ring, the finitely generated trace ideals of $P$ are the ideals
generated by finitely many idempotents (cf. \cite[page 377]{AF})
and so, by \cite[Lemma 2.1]{AGOR}, they are the ideals generated
by a single idempotent.
\end{proof}

\begin{exem}
\label{exam:pullbackchecked} To illustrate Theorem
\ref{pullbacktheorem} in the context of our construction, we check
that the conditions in the theorem are satisfied for the pullback
diagram (\ref{diagram-example}) in Example \ref{exam:main}. We may
assume that $(e_1,e_2)=(1_{Q_1},1_{Q_2})$. We have that
$(\pi_1)_*(K_1(Q_1))=(K(t_2,t_3,\dots, ))[t_1]_{(t_1)}$ and
$(\pi_2)_*(K_1(Q_2))=(K(t_1,t_3,\dots ,))[t_2]_{(t_2)}$, so that
$$K_1(L)=L^{\times}=(\pi_1)_*(K_1(Q_1))+(\pi _2)_*(K_1(Q_2)),$$
as desired. Since $Q_1$ and $Q_2$ are strongly separative von
Neumann regular rings (see \cite[4.3]{AB2}), we conclude from
Theorem \ref{pullbacktheorem} that $Q_K(\mathbb P)$ is a strongly
separative von Neumann regular ring and that $\mon{Q_K(\mathbb
P)}\cong \langle p,a,b\mid p=p+a=p+b\rangle$ (cf. Proposition
\ref{pullfgr}).
\end{exem}

\section{Pushouts}
\label{sect:pushout}

In this section we discuss some constructions of monoids and rings
associated to a certain class of pushouts of monoids. The section
also includes some needed computation on pullbacks (Proposition
\ref{pullfgr}). We refer the reader to \cite[Chapter 8]{Howie} for
the general theory of free products with amalgams of semigroups.

\begin{lem}\label{pushoutlemma}
Let $M$ and $N$ be two conical monoids. Let $I$ be an order-ideal
of $M$ which is isomorphic with an order-ideal of $N$, through an
isomorphism $\varphi\colon  I\to \varphi (I)\lhd N$. Let $P$ be
the monoid $M\times N /\sim$ where $\sim $ is the congruence on
$M\times N$ generated by all pairs $((s,0), (0,\varphi (s)))$ for
$s\in I$. Then we have a pushout diagram
$$
\begin{CD}
I @>>> M\\
   @V\varphi VV  @VV\iota _1V\\
N @>\iota _2>> P
\end{CD}
$$
where $\iota _1\colon M\to P$ is the map $\iota _1(x)=[(x,0)]$ and
$\iota _2\colon N\to P$ is the map $\iota _2(y)=[(0,y)]$. The maps
$\iota _1$ and $\iota _2$ are injective and $I':=\iota _1(I)=\iota
_2(\varphi (I))$ is an order-ideal of $P$ such that $P/I'\cong
M/I\times N/\varphi (I)$. Moreover, if there exist injective
monoid homomorphisms $\theta _1\colon  M\to Q$ and $\theta
_2\colon N\to Q$ into a conical refinement monoid $Q$ such that
${\theta _1}_{|I}=\theta _2\circ \varphi$ and $\theta _1(M)\cap
\theta _2(N)=\theta _1(I)=\theta _2(\varphi (I))$, and $\theta
_1(M)$ and $\theta _2(N)$ are order-ideals of $Q$, then there is
an embedding $\iota \colon P\to Q$ such that $\theta _i=\iota
\circ \iota _i$ for $i=1,2$. \end{lem}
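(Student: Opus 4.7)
The pushout property is the easiest part and follows directly from the universal property of quotients by congruences: any pair of monoid homomorphisms $f_1\colon M\to T$ and $f_2\colon N\to T$ with ${f_1}_{|I}=f_2\circ\varphi$ descends to a well-defined $h\colon P\to T$, $h([m,n]):=f_1(m)+f_2(n)$, because the generating pairs $((s,0),(0,\varphi(s)))$ of $\sim$ are sent to equal elements in $T$.

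For injectivity of $\iota_1$ (and symmetrically $\iota_2$), I will analyse the elementary moves generating $\sim$, which replace $(m+s,n)$ by $(m,n+\varphi(s))$ for some $s\in I$, or perform the reverse. Starting from $(m_1,0)$, I first check that the second coordinate remains in $\varphi(I)$ throughout every chain of moves: this uses that $\varphi(I)$ is closed under addition (for left-to-right moves) and is an order-ideal of $N$ (for right-to-left moves, where the second coordinate is decomposed as $n'+\varphi(s)$). Granting this, the quantity $m+\varphi^{-1}(n)\in M$ is well-defined at each step and is easily verified to be invariant under both kinds of elementary move. Hence if $(m_1,0)\sim(m_2,0)$, then $m_2=m_1$. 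The equality $I':=\iota_1(I)=\iota_2(\varphi(I))$ is immediate from the defining relation; that $I'$ is an order-ideal will follow from the same invariant argument applied to chains starting at $(s,0)$ with $s\in I$, after which the order-ideal hypotheses on $I$ and $\varphi(I)$ force both coordinates of any summand to lie in $I$ and $\varphi(I)$ respectively, placing $[m,n]$ in $\iota_1(I)=I'$. The natural surjection $P\to M/I\times N/\varphi(I)$, $[m,n]\mapsto([m],[n])$, has ideal-kernel exactly $I'$ and so induces the claimed isomorphism $P/I'\cong M/I\times N/\varphi(I)$.

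For the ``Moreover'' clause, the universal property produces a unique $\iota\colon P\to Q$ with $\theta_i=\iota\circ\iota_i$, so only injectivity remains. Given $\iota([m_1,n_1])=\iota([m_2,n_2])$, i.e.\ $\theta_1(m_1)+\theta_2(n_1)=\theta_1(m_2)+\theta_2(n_2)$ in $Q$, the refinement property of $Q$ yields $a,b,c,d\in Q$ with
\begin{equation*}
\theta_1(m_1)=a+b,\quad \theta_1(m_2)=a+c,\quad \theta_2(n_1)=c+d,\quad \theta_2(n_2)=b+d.
\end{equation*}
Since $\theta_1(M)$ and $\theta_2(N)$ are order-ideals of $Q$, $a\in\theta_1(M)$ and $d\in\theta_2(N)$, while $b$ sits below both $\theta_1(m_1)$ and $\theta_2(n_2)$, hence lies in $\theta_1(M)\cap\theta_2(N)=\theta_1(I)$; similarly $c\in\theta_2(\varphi(I))$. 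Pulling back through the injective maps $\theta_i$ produces $a'\in M$, $d'\in N$, and $s,t\in I$ with $m_1=a'+s$, $m_2=a'+t$, $n_1=\varphi(t)+d'$ and $n_2=\varphi(s)+d'$. Two elementary moves then give $[m_1,n_1]=[a',\varphi(s)+\varphi(t)+d']=[m_2,n_2]$ in $P$, proving injectivity of $\iota$.

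The main obstacle is the control over the congruence $\sim$ in the second step: both directions of elementary move must be shown to preserve the invariant $m+\varphi^{-1}(n)=m_1$, and the right-to-left case is where the order-ideal hypothesis on $I$ (and hence on $\varphi(I)$) is genuinely used. Once this invariant is in hand, the remaining assertions reduce to routine bookkeeping, with the refinement property of $Q$ and the order-ideal hypotheses on $\theta_i$ playing exactly parallel roles in the proof of the ``Moreover'' clause.
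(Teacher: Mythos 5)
Your proof is correct, and its overall architecture (pushout from the universal property of the quotient, direct analysis of the generated congruence for injectivity, the map $[m,n]\mapsto([m],[n])$ for the quotient isomorphism, refinement plus the order-ideal hypotheses for the ``Moreover'' clause) matches the paper's. The one genuinely different ingredient is your treatment of the injectivity of $\iota_1$ and of the order-ideal property of $I'$: you observe that along any chain of elementary moves starting at an element with second coordinate in $\varphi(I)$, the second coordinate stays in $\varphi(I)$ and the quantity $m+\varphi^{-1}(n)$ is invariant. The paper instead writes out an explicit alternating normal form for the chain ($s_1=x_1+y_1$, $y_1=x_2+y_2$, etc.) and invokes conicality of $M$ to terminate it. Your invariant is cleaner and more robust --- it avoids having to justify the normal form of the chain, and it does not actually use conicality at this step --- while delivering the same conclusion (and, as you note, the same invariant immediately gives that $(s,0)\sim(x,y)$ forces $y\in\varphi(I)$ with $x+\varphi^{-1}(y)=s$, which is exactly what the order-ideal claims need). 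Your ``Moreover'' argument is, after relabelling the four refinement pieces, identical to the paper's. One small point to tighten: the phrase ``has ideal-kernel exactly $I'$ and so induces the claimed isomorphism'' is not automatic for monoids --- a surjection whose preimage of $0$ is $I'$ need not factor injectively through $P/{\equiv_{I'}}$. You should exhibit the inverse $([x],[y])\mapsto[(x,y)]$ and check it is well defined on classes modulo $I$ and $\varphi(I)$ (a two-line computation, which is how the paper handles it); everything you need for this is already in place.
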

\begin{proof}
It is clear that the diagram is a pushout diagram. We will show
that the map $\iota _1$ is injective. The proof for $\iota _2$ is
similar. If $(s_1,0)\sim (s_2,0)$, then there are sequences
$x_i,y_i$, with $y_i\in I$, such that $s_1=x_1+y_1$, $y_1=x_2+y_2$
and for all $i$, $x_{2i-1}+x_{2i}=x_{2i+1}+y_{2i+1}$, and
$y_{2i}+y_{2i+1}=x_{2i+2}+y_{2i+2}$, such that
$$(s_1,0)=(x_1+y_1,0)\sim (x_1,\varphi (y_1))=(x_1,\varphi (x_2+y_2))
\sim (x_1+x_2,\varphi (y_2)) $$
$$=(x_3+y_3,\varphi (y_2))\sim
\cdots \sim (s_2,0).$$ Since $M$ is conical there is $n$ such that
$y_{2n}=0$ and $s_2=x_{2n-1}+x_{2n}$. We thus get
$$s_2=x_{2n-1}+x_{2n}= x_{2n-1}+y_{2n-2}+y_{2n-1}=\cdots =x_1+y_1=s_1 .$$
In general note that $(s,0)\sim (x,y)$, where $s\in M$ and $x\in
M$ and $y\in N$, implies $y=\varphi (x')$ for some $x'\in I$ such
that $x+x'=s$, so we get that $M$ and $I$ are order-ideals of $P$.

Since $P$ is the pushout of the diagram, we clearly have a map
$\gamma' \colon P\to M/I\times N/\varphi (I)$, defined by $\gamma
([(x,y)])=([x],[y])$. Clearly this map factors through a map
$\gamma\colon P/I'\to M/I\times N/\varphi (I)$. The inverse of
this map is provided by the well-defined map $\tau \colon
M/I\times N/\varphi (I)\to P/I'$, given by $\tau
([x],[y])=[(x,y)]\in P/I'$.

Finally, assume that there exist injective monoid homomorphisms
$\theta _1\colon  M\to Q$ and $\theta _2\colon N\to Q$ into a
conical refinement monoid $Q$ such that ${\theta _1}_{|I}=\theta
_2\circ \varphi$ and $\theta _1(M)\cap \theta _2(N)=\theta
_1(I)=\theta _2(\varphi (I))$, and $\theta _1(M)$ and $\theta _2(N)$
are order-ideals of $Q$. By the pushout property there is a monoid
homomorphism $\iota \colon P\to Q$ such that $\theta _i=\iota \circ
\iota _i$ for $i=1,2$. It remains to show that $\iota $ is
injective. So assume that for $[(x_1,y_1)], [(x_2,y_2)]\in P$ we
have $\theta _1(x_1)+\theta _2(y_1)=\theta_1(x_2)+\theta _2(y_2)$.
Applying refinement in $Q$ we get $\theta _1(x_1)=z_1+z_2$ and
$\theta _2(y_1)=t_1+t_2$ such that $z_1+t_1=\theta _1(x_2)$ and
$z_2+t_2=\theta _2(y_2)$. Since $\theta _1(M)$ is an order-ideal in
$Q$, we get $z_1,z_2\in \theta _1(M)$, and similarly $t_1,t_2\in
\theta _2(N)$. So we can write $z_i=\theta _1(v_i)$ and $t_i=\theta
_2(w_i) $, where $v_i\in M$ and $w_i\in N$. Since $\theta _i$ are
injective we get $x_1=v_1+v_2$ and $y_1=w_1+w_2$. Now $\theta
_1(v_2)\in \theta _1(M)\cap \theta _2(N)=\theta _1(I)$, so we get
$v_2\in I$. Similarly we get $w_1\in \varphi (I)$ and also
$x_2=v_1+\varphi ^{-1}(w_1)$ and $y_2=\varphi (v_2)+w_2$. We have
$$[(x_1,y_1)]=[(v_1+v_2,w_1+w_2)]=[(v_1+\varphi ^{-1}(w_1), \varphi (v_2)+w_2)]
=[(x_2,y_2)].$$
This shows that $\iota $ is injective, as desired.
\end{proof}

We now study the notion of a crowned pushout, that we will need in
our main construction.

Let $P$ be a conical monoid.  Suppose that $P$ contains
order-ideals $I$ and $I'$, with $I\cap I'=0$, such that there is
an isomorphism $\varphi \colon I\to I'$. We have a diagram
$$\begin{CD}
I @>=>> I\\
   @V\varphi VV  @VV\iota_1 V\\
I' @>\iota_2>> P
\end{CD}
$$
which is not commutative.

The {\it crowned pushout} $Q$ of $(P,I,I',\varphi)$ is by
definition the coequalizer of the maps $\iota_1\colon I\to P$ and
$\iota _2\circ \varphi\colon I\to P$, so that there is a map
$f\colon P\to Q$ with $f(\iota _1(x))=f(\iota _2(\varphi (x)))$
for all $x\in I$ and given any other map $g\colon P\to Q'$ such
that $g(\iota _1(x))=g(\iota _2(\varphi (x)))$ for all $x\in I$,
we have that $g$ factors uniquely through $f$.

In the situation of Lemma \ref{pushoutlemma}, the pushout can be
obtained as the crowned pushout of  $(M\times N,\, I\times 0,\,
0\times \varphi (I),\, \varphi)$.

\begin{prop}
\label{crownpushmon} Let $P$ be a conical refinement monoid.
Suppose that $P$ contains order-ideals $I$ and $I'$, with $I\cap
I'=0$, such that there is an isomorphism $\varphi \colon I\to I'$.
Let $Q$ be the crowned pushout of $(P,I,I', \varphi)$. Then $Q$ is
the monoid $P/\sim$ where $\sim$ is the congruence on $P$
generated by $x+i\sim x+\varphi (i)$ for $i\in I$ and $x\in P$.
Moreover $Q$ is a conical refinement monoid, and $Q$ contains an
order-ideal $Z$, isomorphic with $I$, such that the projection map
$\pi \colon P\to Q$ induces an isomorphism $P/(I+I')\cong Q/Z$.

\end{prop}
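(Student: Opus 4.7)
The first assertion, $Q\cong P/\sim$, is formal: the coequalizer in commutative monoids is computed by the congruence generated by the defining pairs $(\iota_1(i),\iota_2(\varphi(i)))=(i,\varphi(i))$, and any monoid congruence is automatically compatible with translations, so this is the same as the congruence generated by $(x+i,x+\varphi(i))$. Write $\pi\colon P\to Q$ for the projection, and call an \emph{elementary move} a replacement $x+y\rightsquigarrow x+z$ with $(y,z)$ of the form $(i,\varphi(i))$ or $(\varphi(i),i)$ for some $i\in I$; then $a\sim b$ iff $a$ and $b$ are connected by a finite chain of elementary moves.

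Conicity of $Q$ follows by a short downward induction on chain length: if a chain ends at $0$, the last move writes $0=x+z$ with $z\in I\cup I'$, forcing $x=z=0$ by conicity of $P$; the generator paired with $z$ then also vanishes, so the preceding term is $0$, and iteratively the whole chain is $0$. Refinement is proved by induction on the length of a chain connecting $a+b$ to $c+d$. Given one elementary move $a+b=x+y\rightsquigarrow x+z=p_1$, refinement in $P$ yields $a=a_x+a_y$, $b=b_x+b_y$, $x=a_x+b_x$, $y=a_y+b_y$; since $y$ lies in the order-ideal $I$ or $I'$, so do $a_y$ and $b_y$, and setting $a':=a_x+\varphi^{\pm1}(a_y)$, $b':=b_x+\varphi^{\pm1}(b_y)$ (with the sign determined by the case) gives $a'+b'=p_1$ with $\pi(a')=\pi(a)$ and $\pi(b')=\pi(b)$. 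Iterating reduces to an actual equality $a_n+b_n=c+d$ in $P$, where refinement in $P$ followed by $\pi$ produces the refinement in $Q$.

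The technical heart of the argument—and the main obstacle—is the following key lemma, from which the structural claims about $Z$ will follow: \emph{if $p\sim i$ with $i\in I$, then $p=u+v$ with $u\in I$, $v\in I'$, and $u+\varphi^{-1}(v)=i$}. The proof is again by induction on chain length. At the inductive step, one analyzes the first move $p=x+y\rightsquigarrow x+z=p_1$ in the two cases depending on whether $y\in I$ or $y\in I'$. Applying the inductive hypothesis to the shorter chain $p_1\sim i$ produces $p_1=u_1+v_1$, and refinement in $P$ of $x+z=u_1+v_1$ gives a $2\times 2$ matrix whose entries are constrained by order-ideal membership (the entries coming from $z$ lie in whichever of $I,I'$ contains $z$, while the entries summing to $u_1$ lie in $I$ and those summing to $v_1$ lie in $I'$). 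The disjointness $I\cap I'=0$ forces exactly one refinement entry to vanish in each case, and the remaining entries reassemble into the desired decomposition $p=u+v$ with $u+\varphi^{-1}(v)=i$. Applying the lemma to $p=j\in I$, the component $v$ lies in $I\cap I'=0$, forcing $j=u=i$; this proves injectivity of $\pi|_I$.

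The remaining structural claims now follow smoothly. The subset $Z:=\pi(I)=\pi(I')$ is an order-ideal of $Q$: if $\pi(a)\leq\pi(i)$, write $\pi(a+a')=\pi(i)$, apply the lemma to conclude $a+a'\in I+I'$, hence $a\in I+I'$ since $I+I'$ is an order-ideal of $P$; writing $a=j+j'$ with $j\in I$, $j'\in I'$ yields $\pi(a)=\pi(j+\varphi^{-1}(j'))\in\pi(I)$. Combined with injectivity of $\pi|_I$, this gives $Z\cong I$. Finally, $Q/Z\cong\bar P:=P/(I+I')$ because the two quotients are characterized by the same universal property: the composition $P\to Q\to Q/Z$ kills both $I$ and $I'$ and so factors through $\bar P$, while $P\to\bar P$ identifies $i$ and $\varphi(i)$ (both being $0$ in $\bar P$) and hence factors through $Q$, and then through $Q/Z$; the two induced maps are mutually inverse.
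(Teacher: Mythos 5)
Your proof is correct, and its overall skeleton (describe $Q$ as $P/\!\sim$, verify conicity and refinement by hand, identify $Z=\pi(I)$, then the quotient isomorphism) matches the paper's. The genuine difference lies in the technical core. The paper disposes of conicity and refinement with ``straightforward/clearly'' and, for the order-ideal $Z$ and the isomorphism $I\cong Z$, appeals to Lemma \ref{pushoutlemma}: it asserts that the restriction of $\sim$ to $I+I'\cong I\times I'$ is exactly the pushout congruence of $I\leftarrow I\to I'$, so that $Z$ is the pushout of that diagram and the injectivity statements of Lemma \ref{pushoutlemma} apply. You instead prove a self-contained structural lemma --- if $p\sim i$ with $i\in I$, then $p=u+v$ with $u\in I$, $v\in I'$ and $u+\varphi^{-1}(v)=i$ --- by induction on the length of a chain of elementary moves, using refinement in $P$ together with $I\cap I'=0$ to kill one entry of the refinement matrix at each step; I checked both cases of the inductive step and they work. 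This lemma simultaneously yields the injectivity of $\pi|_I$, the fact that $Z$ is an order-ideal, and (implicitly) the identification of $\sim$ restricted to $I+I'$ that the paper takes for granted, so your route is arguably more transparent: it makes explicit the one fact that the paper's citation of Lemma \ref{pushoutlemma} quietly presupposes. The price is a longer, more computational argument; the paper's version is shorter but leans on the reader to see why the restricted congruence is the ``internal'' one. Your verifications of conicity and of the refining property of $\sim$ by induction on chain length are also correct and supply exactly the details the paper omits.
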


\begin{proof}
It is clear that the canonical projection $\pi \colon P\to P/\sim$
is the coequalizer of the maps $\iota _1$ and $\iota _2\circ
\varphi$.

It is straightforward to show that $\sim$ is a refining relation
on $P$, that is, if $\alpha \sim x_1+x_2$, then we can write
$\alpha =\alpha _1+\alpha _2$ with $\alpha _i\sim x_i$. Since $P$
is a refinement monoid, it follows that $Q$ is a refinement monoid
too. Clearly $Q$ is conical.

Note that $I+I'\cong I\times I'$, because $P$ is a conical
refinement monoid. By Lemma \ref{pushoutlemma}, the pushout $Z$ of
$I\leftarrow I\rightarrow I'$ is obtained as $I\times I' /\sim '$,
where $\sim '$ is the restriction of the congruence $\sim$ to
$I\times I'\cong I+I'$. It follows easily that $Z$ is an
order-ideal of $Q:=P/\sim$. By Lemma \ref{pushoutlemma}, the map
$I\to Z$ is an isomorphism.

Since the projection map $P\to P/(I+I')$ clearly equalizes the
maps $\iota _1\colon I\to  P$ and $\iota _2\circ \varphi \colon
I\to P$, we get a unique factorization $P\to Q\to P/(I+I')$. We
get therefore a factorization of the identity map
$$P/(I+I')\to Q/Z\to P/(I+I').$$
Since the map $P/(I+I')\to Q/Z$ is surjective we get that the map
$$P/(I+I')\longrightarrow Q/Z$$
is an isomorphism.
\end{proof}

\begin{lem}\label{lem:quotient}{\rm (\cite[Lemma 6.3]{APW})}
Let $I$ be an order-ideal of a primitive monoid $M$. Then $M/I$ is
also a primitive monoid and the canonical map $\pi \colon M\to
M/I$ induces a $\lhd$-isomorphism from $\mathbb P(M)\setminus
\mathbb P(I)$ onto $\mathbb P(M/I)$.
\end{lem}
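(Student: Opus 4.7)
The plan is to exhibit $M/I$ concretely as the primitive monoid built on an explicit sub-poset of $\mathbb P(M)$, so that the structure of $\mathbb P(M/I)$ and its $\lhd$-relation can be read off directly. Set $A := \mathbb P(I) = \mathbb P(M)\cap I$; by Proposition \ref{wellknownlattice}, $A$ is a lower subset of the poset $\mathbb P(M)$, hence $\mathbb P' := \mathbb P(M)\setminus A$ inherits a transitive antisymmetric relation $\lhd|_{\mathbb P'}$. The associated primitive monoid $M' := \M{\mathbb P', \lhd|_{\mathbb P'}}$ (via the bijection between primitive monoids and pairs $(\mathbb P,\lhd)$ recalled just before Proposition \ref{wellknownlattice}) will be my candidate for $M/I$.

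I then construct mutually inverse homomorphisms between $M/I$ and $M'$. Define $f\colon M\to M'$ on the generating set $\mathbb P(M)$ by $f(p)=p$ for $p\in \mathbb P'$ and $f(p)=0$ for $p\in A$. The defining relations $p=p+q$ of $M$ (one for each $q\lhd p$) are preserved: if $p\in A$ then $q\in A$ too because $A$ is a lower subset, and both sides map to $0$; if $p\in \mathbb P'$, the relation becomes $p=p$ or $p=p+q$ in $M'$ according as $q\in A$ or $q\in \mathbb P'$, both valid. Since $I=M\mid A$ is the order-ideal generated by $A$, the map $f$ vanishes on $I$ and descends to $\bar f\colon M/I\to M'$. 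Conversely, define $g\colon M'\to M/I$ on generators by $g(p)=[p]$; the relations of $M'$ lift verbatim to relations in $M$ and therefore hold modulo $I$.

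Evaluating both compositions on generators shows $g\circ\bar f = \mathrm{id}$ (using that $[p]=0$ in $M/I$ whenever $p\in A\subseteq I$) and $\bar f\circ g = \mathrm{id}$. Consequently $M/I\cong M'$ is a primitive monoid, and the primes of $M/I$ correspond under this isomorphism to the elements of $\mathbb P'=\mathbb P(M)\setminus \mathbb P(I)$. The resulting bijection $\mathbb P(M)\setminus \mathbb P(I)\to \mathbb P(M/I)$ is precisely the one induced by $\pi$, and it carries $\lhd|_{\mathbb P'}$ to the $\lhd$-relation on $\mathbb P(M/I)$ by the very construction of $M'$ from the pair $(\mathbb P',\lhd|_{\mathbb P'})$. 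The only delicate point in the argument is verifying that the relations $p=p+q$ with $p\in A$ are respected by $f$, which is exactly where the lower-subset property of $A$ is used; the rest is routine bookkeeping on generators.
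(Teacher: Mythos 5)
Your argument is correct. Note that the paper itself offers no proof of this lemma---it is quoted from \cite[Lemma 6.3]{APW}---so there is nothing to compare against line by line; judged on its own, your proof is complete and self-contained given the facts the paper records. The two ingredients you rely on are both legitimately available: the mutual inverseness of the correspondences $M\mapsto(\mathbb P(M),\lhd)$ and $(\mathbb P,\lhd)\mapsto M(\mathbb P,\lhd)$ (stated before Proposition \ref{wellknownlattice}, citing Pierce), which gives both the presentation of $M$ by generators $\mathbb P(M)$ and relations $p=p+q$ for $q\lhd p$, and the identification of $\mathbb P(M')$ with $\mathbb P'$ for your candidate $M'=M(\mathbb P',\lhd|_{\mathbb P'})$; and Proposition \ref{wellknownlattice} itself, which makes $A=\mathbb P(I)$ a lower subset and $I=M\mid A$, so that $f$ kills exactly $I$ and descends through the congruence $\equiv_I$. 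You correctly isolate the one place where something could go wrong: the relation $p=p+q$ with $p\in A$ is respected only because $q\lhd p$ forces $q\le p$ and hence $q\in A$. The remaining verifications (that $g$ is well defined and that the two composites are the identity on generators) are as routine as you say.
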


The following result determines the structure of the crowned
pushout for a primitive monoid.

\begin{lem}\label{structVcrownpushout}
Let $P$ be a primitive monoid.  Suppose that $P$ contains
order-ideals $I$ and $I'$, with $I\cap I'=0$, such that there is
an isomorphism $\varphi \colon I\to I'$. Let $Q$ be the crowned
pushout of $(P,I,I', \varphi)$. Then $Q$ is a primitive monoid
with $\mathbb P (Q)=\mathbb P (P)\setminus \mathbb P (I')$, with
the order relation induced by the order relation in $\mathbb P
(P)$ and the additional relations $p< q$ whenever $p\in \mathbb
P(I)$, $\, q\in \mathbb P(P)\setminus (\mathbb P (I)\sqcup \mathbb
P(I')) $ and $\varphi (p)< q$ in $\mathbb P (P)$. Moreover, for
$p\in \mathbb P (Q)$, we have that $p$ is free in $Q$ if and only
if $p$ is free in $P$.
\end{lem}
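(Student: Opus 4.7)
The plan is to exhibit an explicit isomorphism $Q \cong R$, where $R := \M{\mathbb{P}_Q, \lhd_R}$ is the primitive monoid built, via the Pierce correspondence recalled in the introduction, from the set $\mathbb{P}_Q := \mathbb{P}(P)\setminus\mathbb{P}(I')$ equipped with the relation $\lhd_R$ described in the statement. Write $\mathbb{P}_0 := \mathbb{P}(I)$, $\mathbb{P}_0' := \mathbb{P}(I')$ and $\mathbb{P}_+ := \mathbb{P}(P)\setminus(\mathbb{P}_0\sqcup\mathbb{P}_0')$, so $\mathbb{P}_Q = \mathbb{P}_0\sqcup\mathbb{P}_+$; by definition $\lhd_R$ is the restriction of $\lhd_P$ to $\mathbb{P}_Q$ together with the extra pairs $p\lhd_R q$ whenever $p\in\mathbb{P}_0$, $q\in\mathbb{P}_+$ and $\varphi(p)\lhd_P q$. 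First I would check that $\lhd_R$ is transitive and antisymmetric so that $R$ is a bona fide primitive monoid. Antisymmetry is automatic: an extra pair $p\lhd_R q$ forces $p\in\mathbb{P}_0$ and $q\in\mathbb{P}_+$, and then $q\lhd_R p$ cannot hold, since it would force $q\in\mathbb{P}_0$ by the lower-subset property of $\mathbb{P}(I)$. For transitivity the only nontrivial cases are old$\circ$new and new$\circ$old; both go through by using that $\varphi$ is a monoid isomorphism $I\to I'$ (hence a $\lhd$-isomorphism $\mathbb{P}_0\to\mathbb{P}_0'$) and that $\mathbb{P}_0$ and $\mathbb{P}_0'$ are lower subsets of $\mathbb{P}(P)$.

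Next I would define mutually inverse homomorphisms $\bar\pi\colon Q\to R$ and $\sigma\colon R\to Q$. For $\bar\pi$, set $\pi\colon P\to R$ by $\pi(p)=p$ if $p\in\mathbb{P}_0\cup\mathbb{P}_+$ and $\pi(p')=\varphi^{-1}(p')$ if $p'\in\mathbb{P}_0'$, using that $\varphi$ induces a bijection $\mathbb{P}_0\leftrightarrow\mathbb{P}_0'$. The defining relations $p+q=p$ of $P$ for $q\lhd_P p$ are respected by a case analysis: if $p\in\mathbb{P}_0$ or $p\in\mathbb{P}_0'$ the conclusion follows from $I$ and $I'$ being order-ideals and $\varphi$ being a $\lhd$-isomorphism; for $p\in\mathbb{P}_+$ the only substantive case is $q\in\mathbb{P}_0'$, where $\pi(q)=\varphi^{-1}(q)\in\mathbb{P}_0$ and the required relation $\varphi^{-1}(q)\lhd_R p$ is precisely the new relation in $\lhd_R$ (since $q=\varphi(\varphi^{-1}(q))\lhd_P p$). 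Because $\pi(i)=\pi(\varphi(i))$ for all $i\in I$, the map $\pi$ descends to $\bar\pi\colon Q\to R$ by the description of $Q$ as the coequalizer from Proposition \ref{crownpushmon}. For $\sigma$, send each generator $p\in\mathbb{P}_Q$ of $R$ to $\pi_Q(p)\in Q$, where $\pi_Q\colon P\to Q$ is the quotient. Old $\lhd_R$-relations are respected because $\pi_Q$ is a homomorphism; a new relation $p\lhd_R q$ with $\varphi(p)\lhd_P q$ is respected because $\pi_Q(\varphi(p))=\pi_Q(p)$ in $Q$, giving $\pi_Q(q)+\pi_Q(p)=\pi_Q(q+\varphi(p))=\pi_Q(q)$. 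A short check on generators then shows $\bar\pi\circ\sigma=\mathrm{id}_R$ and $\sigma\circ\bar\pi=\mathrm{id}_Q$.

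Once $Q\cong R$ is established, the description of $\mathbb{P}(Q)$ and of its order relation is immediate, and the claim about free primes follows directly: for $p\in\mathbb{P}_+$ the relation $p\lhd_R p$ is by construction the same as $p\lhd_P p$; for $p\in\mathbb{P}_0$ the new relations of $\lhd_R$ do not introduce any pair $p\lhd_R p$ (since they require the smaller side in $\mathbb{P}_0$ and the larger in $\mathbb{P}_+$), so again $p\lhd_R p$ iff $p\lhd_P p$. The main obstacle is the case analysis in the construction of $\pi$ and $\sigma$: the extra relations in $\lhd_R$ must be introduced \emph{exactly} so as to force $\pi$ to be well-defined while simultaneously being no more than what is already forced in $Q$ by the identifications $i\sim\varphi(i)$, and this delicate matching is what makes the two constructions mutually inverse.
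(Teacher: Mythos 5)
Your proposal is correct, but it takes a genuinely different route from the paper. The paper first invokes Proposition \ref{crownpushmon} to get that $Q$ is a conical refinement monoid containing an order-ideal $Z\cong I$ with $Q/Z\cong P/(I+I')$; it then proves antisymmetry by writing down explicit candidate functionals $\phi^Q_p\colon Q\to\mathbb Z^\infty$ (equal to $\phi^P_p$ for $p\notin\mathbb P(I)\sqcup\mathbb P(I')$ and to $\phi^P_p+\phi^P_{\varphi(p)}$ for $p\in\mathbb P(I)$), checks these are well defined on $\sim$-classes and assemble into an embedding $Q\to\prod_{p}\mathbb Z^\infty$, and finally reads off $\mathbb P(Q)$ from Lemma \ref{lem:quotient} applied to $Q/Z$. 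You instead build the target monoid $R=M\bigl(\mathbb P(P)\setminus\mathbb P(I'),\lhd_R\bigr)$ directly by generators and relations via the Pierce correspondence and exhibit mutually inverse homomorphisms $Q\leftrightarrows R$. Your route is more elementary and self-contained at the level of universal properties: once $\lhd_R$ is checked to be transitive and antisymmetric (your case analysis, which correctly uses that $\mathbb P(I)$, $\mathbb P(I')$ are $\le$-lower subsets and that $\varphi$ is a $\lhd$-isomorphism, is exactly what is needed), the refinement and antisymmetry of $Q$ come for free from the Pierce bijection rather than being verified by hand. What the paper's route buys in exchange is the explicit description of the canonical functionals $\phi^Q_p$ on the crowned pushout, which it notes coincide with the canonical functions of the primitive monoid $Q$; your argument does not produce these. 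Both arguments rest on the same two combinatorial facts (the augmented relation is transitive and antisymmetric; elements of $I$ are sums of primes of $\mathbb P(I)$, so checking the coequalizing condition on primes suffices), so the proofs are of comparable difficulty, and your identification of the free primes is the same observation the paper makes implicitly at the end of its proof.
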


\begin{proof}
By Proposition \ref{crownpushmon}, $Q$ contains an order-ideal $Z$,
isomorphic with $I$, such that $Q/Z\cong P/(I+I')$. Moreover $Q$ is
a conical refinement monoid. To show that $Q$ is antisymmetric, we
define for $p\in \mathbb P (P)\setminus \mathbb P (I')$, some maps
$\phi ^Q_p\colon Q\to \mathbb Z^{\infty}=\mathbb Z^+\cup
\{\infty\}$, by
$$\phi _p^Q([x])=
\begin{cases}\phi _p^P(x)\quad & \text{if $p\in \mathbb P(P)\setminus
(\mathbb P(I)\sqcup \mathbb P (I'))$,} \\
               \phi _p^P(x) +\phi _{\varphi (p)}^P(x) \quad   &\text{if $p\in
\mathbb P(I)$}
      \end{cases}$$
Here $\phi^P _p(x)=\text{sup}\{n\in \mathbb Z:np\le x\}$ are the
functions associated to the primitive monoid $P$, see
(\ref{Eq:Defphi}). It is easy to show from the definition of the
relation $\sim$ that the maps $\phi ^Q_p$ are well-defined, and
that they define an injective monoid homomorphism $\phi ^Q\colon
Q\to \prod _{p\in \mathbb P (P)\setminus \mathbb P (I')}\mathbb Z
^{\infty}$, from which it follows that $Q$ is antisymmetric. Thus
we get that $Q$ is a primitive monoid.

By Lemma \ref{lem:quotient}, we have
$$\mathbb P (Q)\setminus \mathbb P (Z)=\mathbb P (Q/Z)=\mathbb P
(P/(I+I'))= \mathbb P(P)\setminus (\mathbb P (I)\sqcup \mathbb P
(I')),$$ and so $\mathbb P(Q)=\mathbb P(Z)\sqcup (\mathbb
P(P)\setminus (P (I)\sqcup \mathbb P (I'))) =\mathbb P(P)\setminus
\mathbb P(I')$, and clearly the function $\phi ^Q$ defined above
is the canonical function associated to the primitive monoid $Q$.
The rest of the proof follows from this.
\end{proof}

Now we will describe a construction with von Neumann regular rings
which produces the effect of the crowned pushout of Proposition
\ref{crownpushmon} at the level of monoids of projectives.
Moreover we have a great deal of control on the ring so produced.

We will need the theory of Morita-equivalence for rings with local
units. We refer the reader to \cite{AM} for basic information on
this theory (see also \cite{Abrams} and \cite{GS}). Recall that
$R$ is a {\it ring with local units} if every finite subset of $R$
is contained in a ring of the form $eRe$, where $e=e^2\in R$.
Every von Neumann regular ring is a ring with local units, by
\cite[Example 1]{AM}. When working with modules over rings with
local units, one assume that these modules are {\it unitary}. For
instance if $M$ is a right module over $R$, this means that
$M=MR$. Equivalently for every finite number of elements
$x_1,\dots ,x_n$ in $M$ there is an idempotent $e$ in $R$ such
that $x_ie=x_i$ for all $i$.

Let $I$ and $I'$ be two rings with local units. We say that $I$
and $I'$ are {\it Morita-equivalent} in case there are (unitary)
bimodules ${_IN}_{I'}$ and ${_{I'}M}_I$  and bimodule isomorphisms
$N\otimes_{I'}M\to I$ and $M\otimes _{I}N\to I'$ satisfying the
following associativity laws: $(nm)n'=n(mn')$ and $(mn)m'=m(nm')$,
for all $n,n'\in N$ and $m,m'\in M$.

\begin{prop}\label{crownpushoutprop}
Let $R$ be a (non-necessarily unital) von Neumann regular ring
with ideals $I$ and $I'$ such that $I\cap I'=0$, and suppose that
$I$ and $I'$ are Morita equivalent. Then there is a von Neumann
regular ring $U$ with an ideal $J$ such that the following holds:

(1) There exists an injective ring homomorphism $\alpha \colon
R\to U$ such that $\alpha (I)\subseteq J$ and $\alpha
(I')\subseteq J$. If $R$ is unital, then $U$ is unital and
$\alpha$ is a unital map.

(2) The map $\mon{\alpha}\colon \mon{R}\to \mon{U}$ restricts to an
isomorphism from $\mon{I}$ onto $\mon{J}$, and it also restricts to
an isomorphism from $\mon{I'}$ onto $\mon{J}$.

(3) Let $\varphi\colon \mon{I}\to \mon{I'}\subseteq \mon{R}$ be the
isomorphism defined by $$\varphi :=(\mon{\alpha}
_{|\mon{I'}})^{-1}\circ (\mon{\alpha } _{|\mon{I}}).$$ Then
$\mon{\alpha}\colon \mon{R} \to \mon{U}$ is the coequalizer of the
following (non-commutative) diagram:

\begin{equation}
\label{diag:NCD.1}
\begin{CD}
\mon{I} @>=>> \mon{I}\\
   @V{\varphi }VV  @VVV\\
\mon{I'} @>>> \mon{R}
\end{CD}
\end{equation}
\end{prop}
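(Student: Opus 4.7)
The plan is to construct $U$ as a \emph{Morita context extension} of $R$. Let ${}_{I}N_{I'}$ and ${}_{I'}M_{I}$ realise the Morita equivalence, with context maps $\mu\colon N\otimes_{I'}M\to I$ and $\nu\colon M\otimes_{I}N\to I'$ satisfying the associativity laws. Since $I,I'$ are ideals of $R$, every unitary (left or right) $I$-module extends uniquely to a unitary $R$-module, giving $R$-bimodule structures on $N$ and $M$. I set $U:=R\oplus N\oplus M$ as an abelian group, with multiplication extending that of $R$, using the $R$-bimodule actions on $N,M$, the prescriptions $nm:=\mu(n\otimes m)\in I$ and $mn:=\nu(m\otimes n)\in I'$, and $NN=MM=0$. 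The Morita context axioms yield associativity, and when $R$ is unital the unitariness of $N,M$ forces $1_R$ to be the unit of $U$ as well. Let $\alpha\colon R\into U$ be the obvious inclusion and $J$ the ideal of $U$ generated by $\alpha(I)$; a direct calculation gives $J=I+N+M+I'$ and exhibits a ring isomorphism $J\cong\bigl(\begin{smallmatrix}I&N\\M&I'\end{smallmatrix}\bigr)$ with the Morita context ring.

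This immediately yields (1): $\alpha$ is injective and unital when $R$ is, and $\alpha(I),\alpha(I')\subseteq J$; the ring $J$ is Morita equivalent to $I$ (hence von Neumann regular), and $U/J\cong R/(I+I')$ is regular, so \cite[Lemma~1.3]{vnrr} gives regularity of $U$. For (2), under the identification $J\cong\bigl(\begin{smallmatrix}I&N\\M&I'\end{smallmatrix}\bigr)$ the map $\alpha|_I\colon I\hookrightarrow J$ is the top-left corner embedding, so $\mon{\alpha}|_{\mon{I}}\colon\mon{I}\to\mon{J}$ coincides with the classical Morita-equivalence isomorphism; the symmetric argument applies to $I'$.

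For (3) I first show $\mon{\alpha}\colon\mon{R}\to\mon{U}$ is surjective. Given an idempotent $f\in M_n(U)$, its image $\bar f\in M_n(U/J)\cong M_n(R/(I+I'))$ lifts to an idempotent $e\in M_n(R)$ by regularity of $R$, and then refinement in the conical refinement monoid $\mon{U}$, combined with the fact that $\mon{J}$ is an order-ideal of $\mon{U}$ (Proposition~\ref{prop:wellknownideals}) entirely contained in the image of $\mon{\alpha}$ by~(2), lets one assemble $e$ with suitable idempotents from $M_\infty(I)$ to realise $[f]$ in $\mon{\alpha}(\mon{R})$. Since by~(2) we have $[\alpha(e)]=[\alpha(\varphi(e))]$ in $\mon{U}$ for every $[e]\in\mon{I}$, the map $\mon{\alpha}$ descends to a surjection $\pi\colon\mon{R}/{\sim}\twoheadrightarrow\mon{U}$.

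The main obstacle is to prove $\pi$ is injective: given $e_1,e_2\in\idem{M_\infty(R)}$ with $\alpha(e_1)\sim\alpha(e_2)$ in $M_\infty(U)$, one must realise this equivalence by a chain of elementary moves in $\mon{R}$ consisting of additions and the basic relations $[e]=[\varphi(e)]$ for $e\in\idem{M_\infty(I)}$. My approach is to pick $x\in\alpha(e_1)M_\infty(U)\alpha(e_2)$ and $y\in\alpha(e_2)M_\infty(U)\alpha(e_1)$ with $xy=\alpha(e_1)$, $yx=\alpha(e_2)$, decompose $x=r+n+m$ and $y=r'+n'+m'$ according to $U=R\oplus N\oplus M$, and expand the defining equations using the Morita context identities. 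The diagonal $R$-components produce a partial $M_\infty(R)$-equivalence between $e_1$ and $e_2$; the cross terms $\mu(n\otimes m'),\nu(m\otimes n'),\ldots$ produce orthogonal idempotent summands in $M_\infty(I)$ and $M_\infty(I')$ that are linked precisely by $\varphi$ through those same Morita identities, so each such cross term corresponds to one application of the basic relation $[e]=[\varphi(e)]$. Assembling these moves yields the required chain of identifications in $\mon{R}/{\sim}$.
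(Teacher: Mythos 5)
Your ring $U=R\oplus N\oplus M$ is, after the obvious identifications, the same ring the paper constructs (there it is presented as the set of matrices $\bigl(\begin{smallmatrix} r+I' & n\\ m & r+I\end{smallmatrix}\bigr)$, whose diagonal part is the pullback $R/I'\times_{R/(I+I')}R/I\cong R$ because $I\cap I'=0$), and your treatment of (1) and (2) agrees with the paper's. The problem is part (3), which you rightly call the main obstacle but do not actually prove. Expanding $xy=\alpha(e_1)$ and $yx=\alpha(e_2)$ in the coordinates $R\oplus N\oplus M$ produces equations such as $rr'+nm'+mn'=e_1$; here $rr'$, $nm'\in I$ and $mn'\in I'$ are merely elements, not idempotents, and they are certainly not orthogonal idempotent summands of $e_1$. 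So the claim that ``the cross terms produce orthogonal idempotent summands in $M_\infty(I)$ and $M_\infty(I')$ that are linked precisely by $\varphi$'' is unjustified as stated, and the promised chain of elementary moves in $\mon{R}/{\sim}$ never materialises.

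The missing device is a local-unit observation that the paper isolates and uses twice: given finitely many $x_i,y_j\in J$ and a diagonal idempotent $e=\diag(e_1+I',e_2+I)$ with $ex_i=x_i$ and $y_je=y_j$, there exist idempotents $g_1\in I$, $g_2\in I'$ such that $g=\diag(g_1,g_2)\le e$, $gx_i=x_i$ and $y_jg=y_j$ (this uses that the idempotents of $I$ and of $I'$ are upward directed and that $N=IN=NI'$, $M=I'M=MI$). Applying this to the off-diagonal parts of $x$ and $y$ gives an \emph{orthogonal} decomposition $\alpha(e_1)=g+(\alpha(e_1)-g)$ with $g\in J$ and $\alpha(e_1)-g\in\alpha(R)$; then $\alpha(e_2)=ygx+y(\alpha(e_1)-g)x$ is the matching decomposition, because $y(\alpha(e_1)-g)x$ turns out to be diagonal (the off-diagonal entries are annihilated by $e-g$) and equivalent to $\alpha(e_1)-g$ inside $\alpha(R)$, while $ygx\sim g$ in $J$. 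One is thereby reduced to showing that $\diag(h_1,h_1')\sim\diag(h_2,h_2')$ in $J$ forces $[h_1]+[h_1']=[h_2]+[h_2']$ in $\mon{R}/{\sim}$, which follows from refinement in the regular ring $J$ together with the relations $[h]=[\varphi(h)]$. The same observation also closes a smaller gap in your surjectivity argument: from $[f_1]+[f_2]\in\mon{\alpha}(\mon{R})$ and $[f_2]\in\mon{J}$ one cannot ``subtract'' to conclude $[f_1]\in\mon{\alpha}(\mon{R})$; instead one inserts a diagonal $g$ with $f_2\le g\le\alpha(f)$ and writes $[f_1]=[\alpha(f)-g]+[g-f_2]$, both summands visibly in the image.
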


\begin{proof}
(1) Let ${_IN}_{I'}$ and ${_{I'}M}_I$ be (unitary) bimodules
implementing a Morita equivalence between $I$ and $I'$, so that
there are bimodule isomorphisms $N\otimes_{I'}M\to I$ and
$M\otimes _{I}N\to I'$ satisfying the associativity laws
$(nm)n'=n(mn')$ and $(mn)m'=m(nm')$, for all $n,n'\in N$ and
$m,m'\in M$.

The following diagram is a pullback:
$$
\begin{CD}
R @>\pi _1>> R/I\\
   @V\pi _2 VV  @VV\pi_4V\\
R/I' @>\pi _3>> R/(I+I')
\end{CD} .
$$
Consider the ring $U$ whose elements are matrices
$$X=\begin{pmatrix}
r_1+I' & n \\
m & r_2+I
\end{pmatrix}$$
such that $\pi _3(r_1+I')=\pi _4(r_2+I)$, where $r_1+I'\in R/I'$ and
$r_2+I\in R/I$, and $n\in N$ and $m\in M$. Since the diagram above
is a pullback, we see that for a matrix $X$ as above, there is a
unique $r\in R$ such that  $r_1+I'=\pi_2(r)$ and $r_2+I=\pi_1(r)$,
which gives another way of describing the elements of $U$.  Set
$J=\begin{pmatrix}
I & N \\
M & I'
\end{pmatrix}$, which is an ideal of $U$. The map $\alpha\colon R\to
U$ is defined by $\alpha (r)=\text{diag}(\pi _2(r),\pi_1(r))\in U$.
Clearly $U$ and $\alpha$ are unital if so is $R$.

It is easy to check that $J$ has local units. Since $J$ is
Morita-equivalent to $I$ (and to $I'$) and has local units, it
follows from \cite[Proposition 3.1]{AM} that $J$ is von Neumann
regular. Since $J$ and $U/J\cong R/I'\times R/I$ are von Neumann
regular, it follows from \cite[Lemma 1.3]{vnrr} that $U$ is von
Neumann regular.

(2) Observe that $\alpha (I)=\begin{pmatrix}  I & 0 \\ 0 & 0
\end{pmatrix}$, and similarly $\alpha (I')=\begin{pmatrix}
0 & 0 \\ 0 & I'
\end{pmatrix}$. Since $J$ is Morita-equivalent to both $I$ and
$I'$, we conclude that the maps $\mon{\alpha}_{|\mon{I}}$ and
$\mon{\alpha }_{|\mon{I'}}$ are both isomorphisms onto $\mon{J}$,
see \cite[Corollary 5.6]{GoodLeavitt}.

(3) The coequalizer of the maps in the diagram (\ref{diag:NCD.1}) is
constructed in Proposition \ref{crownpushmon} as $\mon{R}/\sim$
where $\sim$ is the congruence on $\mon{R}$ generated by $x+i\sim
x+\varphi (i)$ for every $x\in \mon{R}$ and $i\in \mon{I}$. Clearly
there exists a unique monoid homomorphism $\rho\colon {\mon{R}/\sim}
\longrightarrow \mon{U}$ such that $\rho ([x])=\mon{\alpha }(x)$ for
every $x\in \mon{R}$, where $[x]$ denotes the equivalence class of
$x\in \mon{R}$ in $\mon{R}/\sim$. We have to show that the map
$\rho$ is an isomorphism.

Since our construction is compatible with the passage to matrices
of arbitrary size, we can restrict our considerations to
idempotents in $R$ and $U$. The following observation will be
crucial for our proof:

Given a finite number of elements $x_1,\dots ,x_n,y_1,\dots
,y_m\in J$ and given any "diagonal" idempotent $e=\begin{pmatrix}
e_1 +I'& 0 \\
0 & e_2+I
\end{pmatrix}\in U$ such that $ex_i=x_i$ for all $i$ and $y_je=y_j$ for all $j$,
there
exist idempotents $g_1\in I$ and $g_2\in I'$ such that
$g:=\begin{pmatrix}
g_1 & 0 \\
0 & g_2
\end{pmatrix}\in J$ satisfies $g\le e$ and $gx_i=x_i$ for all
$i$ and $y_jg=y_j$ for all $j$.

This observation is easily proved taking into account that the
sets of idempotents in $I$ and $I'$ are directed, and the facts
that $N=IN=NI'$ and $M=I'M=MI$.

Let us show first that $\rho$ is surjective. Let $e$ be an
idempotent in $U$. Since $U/J\cong R/(I+I')$, there is an idempotent
$f\in R$ such that $\alpha (f)+J=e+J$. From \cite[Proposition
2.19]{vnrr} we get orthogonal decompositions $e=e_1+e_2$ and $\alpha
(f)=f_1+f_2$ such that $e_1\sim f_1$ and $e_2,f_2\in J$. Since
$\mon{\alpha}_{|\mon{I}}$ is an isomorphism from $\mon{I}$ onto
$\mon{J}$, we see that it is enough to show that $\mon{f_1}\in
\mon{\alpha}(\mon{R}) $. (Here $\mon{f_1}$ denotes the class of
$f_1$ in $\mon{U}$.)
 By the above observation we can find
idempotents $g_1\in I$ and $g_2\in I'$ such that
$g:=\begin{pmatrix}
g_1 & 0 \\
0 & g_2
\end{pmatrix}\in J$ satisfies $g\le \alpha (f)$ and $f_2\le g$. Now we have
an orthogonal decomposition
$$f_1= (\alpha (f)-g)+(g-f_2)$$
with $\alpha (f)-g\in \alpha (R)$ and $g-f_2\in J$. Consequently,
both $\mon{\alpha (f)-g}$ and $\mon{g-f_2}$ belong to the image of
$\mon{\alpha}$, and we get
$$\mon{e}=\mon{e_1}+\mon{e_2}=\mon{f_1}+\mon{e_2}=\mon{\alpha (f)-g}+\mon{g-f_2}+\mon{e_2}\in
\mon{\alpha}(\mon{R}),$$ as required.

Finally we prove the injectivity of $\rho$. Assume that $e$ and $f$
are idempotents in $R$ such that $\mon{\alpha (e)}=\mon{\alpha (f)}$
in $\mon{U}$. There are $x\in \alpha (e)U\alpha (f)$ and $y\in
\alpha (f)U\alpha (e)$ such that $xy=\alpha (e)$ and $yx=\alpha
(f)$. Write $x=\begin{pmatrix}
r_1+I' & n_1 \\
m_1 & r_1+I
\end{pmatrix}$ and $y=\begin{pmatrix}
r_2+I' & n_2 \\
m_2 & r_2+I
\end{pmatrix}$, where $r_1,r_2\in R$ and $n_i\in N$ and $m_i \in M$
for $i=1,2$. Applying the observation above to
$x_1=\begin{pmatrix}
0 & n_1 \\
m_1 & 0
\end{pmatrix}$ and $y_1=\begin{pmatrix}
0 & n_2 \\
m_2 & 0
\end{pmatrix}$, and to the
idempotent $\alpha (e) $, we get idempotents $g_1\in I$ and
$g_2\in I'$ such that $g:=\begin{pmatrix}
g_1 & 0 \\
0 & g_2
\end{pmatrix}\in J$ satisfies $g\le e$ and moreover $gx_1=x_1$ and $y_1g=y_1$.
But now $\alpha (e)=g+(\alpha(e)-g)$ and $\alpha (f)=ygx+y(\alpha
(e)-g)x$, with $ygx\sim g$ in $J$ and
\begin{align*}
y(\alpha(e)-g)x=&\begin{pmatrix}
(r_2+I')(e-g_1+I') & 0 \\
0 & (r_2+I)(e-g_2+I)
\end{pmatrix}\\
&\cdot \begin{pmatrix}
(e-g_1+I')(r_1+I') & 0 \\
0 & (e-g_2+I)(r_1+I)
\end{pmatrix}
\end{align*}
is equivalent to $\alpha(e)-g$ in $\alpha (R)$. Note that also
$ygx\in J$. Thus it remains to show that if
$\mbox{diag}(e_1,f_1)\sim \mbox{diag}(e_2,f_2)$ in $J$, then
$[\mon{e_1}+\mon{f_1}]=[\mon{e_2}+\mon{f_2}]$ in $\mon{R}/\sim$.
Using refinement (in $J$) we get orthogonal decompositions
$e_1=h_1+h_2$ and $f_1=h_1'+h_2'$ such that
$\mbox{diag}(h_1,h_1')\sim \mbox{diag}(e_2,0)$ and
$\mbox{diag}(h_2,h_2')\sim \mbox{diag}(0,f_2)$. So we have in
$\mon{R}/\sim$
\begin{align*}
[\mon{e_1}+\mon{f_1}] & =[\mon{h_1}+\mon{h_2}+\mon{h_1'}+\mon{h_2'}]\\
& =[\mon{h_1}+ \varphi (\mon{h_2})+\mon{h_1'}+\mon{h_2'}]\\ & =
[\mon{h_1}+\varphi ^{-1}(\mon{h_1'})+\varphi (\mon{h_2})+\mon{h_2'}]\\
&
  =[\mon{e_2}+\mon{f_2}].
\end{align*}
This completes the proof of the proposition.

\end{proof}

As an easy example to illustrate Proposition \ref{crownpushoutprop},
we consider the case $R=I\oplus I'$ with $I\cong I'\cong K$ as
rings. Then the ring $U$ produced in the proof of
\ref{crownpushoutprop} is just the ring $U=M_2(K)$ of $2\times 2$
matrices over $K$, and the map $\alpha \colon R\to U$ is just the
embedding along the diagonal.

\bigskip

We will also need a description of certain pullbacks of primitive
monoids

\begin{prop}\label{pullfgr}
Let $M_1$ and $M_2$ be primitive monoids. Let $N_i$ be an
order-ideal in $M_i$ such that $M_1/N_1\cong M_2/N_2\cong S$ and
let $P$ be the pullback of $M_1\longrightarrow S\longleftarrow
M_2$. Assume that $p\lhd q$ for all $p\in \mathbb P (N_i)$ and all
$q\in \mathbb P(M_i)\setminus \mathbb P (N_i)$, $i=1,2$. Then the
pullback $P$ is also a primitive monoid and $P$ has an order-ideal
$N\cong N_1\times N_2$ such that $P/N\cong S$. Moreover one has
$\mathbb P (P)=\mathbb P (S)\sqcup \mathbb P (N_1) \sqcup \mathbb
P (N_2)$, with the order relation $\lhd $ given by the relations
$\lhd$ of each monoid $S$, $N_1$ and $N_2$, and the further
relations $p\lhd q$ for all $p\in \mathbb P(N_1)\sqcup \mathbb
P(N_2)$ and $q\in \mathbb P (S)$.
\end{prop}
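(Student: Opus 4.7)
The strategy is to construct a primitive monoid $P^{*}$ with the prime data prescribed in the statement and to exhibit an explicit isomorphism $\eta\colon P^{*}\to P$; once this is done, all the assertions of the proposition follow. Set $\mathbb{P}=\mathbb{P}(S)\sqcup\mathbb{P}(N_{1})\sqcup\mathbb{P}(N_{2})$ and define $\lhd$ on $\mathbb{P}$ as the union of the three given orders, together with $p\lhd q$ for every $p\in\mathbb{P}(N_{1})\sqcup\mathbb{P}(N_{2})$ and every $q\in\mathbb{P}(S)$. A case check shows $\lhd$ is transitive and antisymmetric: the only transitivity instances crossing blocks chain through an $N_{i}$-prime or an $S$-prime into an $S$-prime, and antisymmetry across blocks holds because the cross relations only point from the $N_{i}$-blocks to the $S$-block. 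Thus $P^{*}:=M(\mathbb{P},\lhd)$ is a primitive monoid with the claimed prime data.

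For each $q\in\mathbb{P}(S)$, Lemma~\ref{lem:quotient} supplies a unique $q_{i}\in\mathbb{P}(M_{i})\setminus\mathbb{P}(N_{i})$ with $\pi_{i}(q_{i})=q$. Define $\eta\colon P^{*}\to P$ on generators by $p\mapsto(p,0)$ for $p\in\mathbb{P}(N_{1})$, $p\mapsto(0,p)$ for $p\in\mathbb{P}(N_{2})$, and $q\mapsto(q_{1},q_{2})$ for $q\in\mathbb{P}(S)$. The defining relations of $P^{*}$ are preserved in each block by the corresponding relations in $N_{1}$, $N_{2}$, $M_{1}$ and $M_{2}$ (using the $\lhd$-iso of Lemma~\ref{lem:quotient} for the $S$-block), and for the cross relations $p\lhd q$ with $p\in\mathbb{P}(N_{i})$ and $q\in\mathbb{P}(S)$ the required identity $p+q_{i}=q_{i}$ in $M_{i}$ is exactly the standing hypothesis. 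The central technical input is that $\pi_{i}$ restricts to an isomorphism between the submonoid $M_{i}^{\mathrm{top}}$ of $M_{i}$ generated by $\mathbb{P}(M_{i})\setminus\mathbb{P}(N_{i})$ and $S$: surjectivity is clear from Lemma~\ref{lem:quotient}, and for injectivity one observes that any non-zero $t\in M_{i}^{\mathrm{top}}$ contains a top prime $q'$ which absorbs every $p\in\mathbb{P}(N_{i})$, so $t+m=t$ for every $m\in N_{i}$; using this and the $\phi$-embeddings of $M_{i}$ and $S$, one checks that $\phi^{M_{i}}_{q'}(t)=\phi^{S}_{\pi_{i}(q')}(\pi_{i}(t))$ for every top prime $q'$, which forces injectivity. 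Let $\iota_{i}\colon S\to M_{i}^{\mathrm{top}}$ denote the resulting inverse.

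For surjectivity of $\eta$, write any $(x_{1},x_{2})\in P$ as $x_{i}=n_{i}+t_{i}$ with $n_{i}\in N_{i}$ and $t_{i}\in M_{i}^{\mathrm{top}}$, possible because $M_{i}$ is primely generated. The pullback equation $\pi_{1}(x_{1})=\pi_{2}(x_{2})$ reduces to $\pi_{1}(t_{1})=\pi_{2}(t_{2})=:s$, so $t_{i}=\iota_{i}(s)$ and $\eta(n_{1}+n_{2}+s)=(n_{1}+t_{1},n_{2}+t_{2})=(x_{1},x_{2})$. For injectivity, define $\phi^{P}\colon P\to(\mathbb{Z}^{\infty})^{\mathbb{P}}$ by $\phi^{P}_{p}(x_{1},x_{2}):=\phi^{M_{i}}_{p}(x_{i})$ for $p\in\mathbb{P}(N_{i})$ and $\phi^{P}_{q}(x_{1},x_{2}):=\phi^{M_{1}}_{q_{1}}(x_{1})=\phi^{M_{2}}_{q_{2}}(x_{2})$ for $q\in\mathbb{P}(S)$; the two expressions for $\phi^{P}_{q}$ agree because both equal $\phi^{S}_{q}(\pi_{1}(x_{1}))$ via the isomorphism $\pi_{i}\colon M_{i}^{\mathrm{top}}\to S$. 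This $\phi^{P}$ is a monoid embedding, and $\phi^{P}\circ\eta=\phi^{P^{*}}$ holds on generators and hence on all of $P^{*}$, so $\eta$ is injective. Hence $\eta$ is an isomorphism; the image $N:=\eta(N_{1}+N_{2})=\{(n_{1},n_{2}):n_{i}\in N_{i}\}$ is an order-ideal of $P$ isomorphic to $N_{1}\times N_{2}$, and $P/N\cong P^{*}/(N_{1}+N_{2})\cong S$ by Lemma~\ref{lem:quotient}. The main obstacle is the isomorphism $\pi_{i}\colon M_{i}^{\mathrm{top}}\to S$, on which both the surjectivity of $\eta$ and the well-definedness of $\phi^{P}_{q}$ depend.
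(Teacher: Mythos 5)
Your proof is correct and follows essentially the same strategy as the paper: build the candidate primitive monoid $Q=M(\mathbb P(S)\sqcup\mathbb P(N_1)\sqcup\mathbb P(N_2),\lhd)$ from the prescribed prime data and identify it with the pullback $P$. The paper merely asserts that the isomorphism $Q\cong P$ ``follows easily'' from a $3\times 3$ diagram of short exact sequences of order-ideals, whereas you supply the verification explicitly (surjectivity via the splitting $x_i=n_i+t_i$ with $t_i$ in the submonoid generated by the top primes, injectivity via the canonical $\phi$-embeddings); this is a legitimate and complete way to carry out the step the paper leaves to the reader.
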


\begin{proof}
By Lemma \ref{lem:quotient}, $S$ is a primitive monoid and
$\mathbb P(S)$ is $\lhd$-isomorphic to $\mathbb P(M_i)\setminus
\mathbb P (N_i)$ for $i=1,2$. Let $Q$ be the primitive monoid
determined by the set of primes $\mathbb P (Q)=\mathbb P (S)\sqcup
\mathbb P (N_1) \sqcup \mathbb P (N_2)$, with the relations
inherited by each monoid and $p\lhd q$ for all $p\in \mathbb
P(N_1)\sqcup \mathbb P(N_2)$ and $q\in \mathbb P (S)$. The result
that $P\cong Q$ follows easily from the existence of the
commutative diagram

$$
\begin{CD}
& & & & 0 & & 0\\
& & & & @VVV   @VVV   \\
& & & & N_1 @>=>>  N_1 \\
& & & & @VVV @VVV \\
0 @>>> N_2 @>>> Q @>>> M_1 @>>> 0 \\
& & @V=VV  @VVV  @VVV  \\
0 @>>> N_2  @>>> M_2 @>>> S @>>> 0\\
& & & & @VVV   @VVV   \\
& & & & 0 &   & 0
\end{CD}
$$

\medskip

\noindent where here a short exact sequence such as $0\to N\to
M\to T\to 0$ means that $N$ is an order-ideal of $M$ such that
$M/N\cong T$.
\end{proof}

\section{The building blocks}
\label{sect:building}

In this section we recall some notation and results from \cite{AB2}
and \cite{AB3} that we will need in the proof of our main result.
For the sake of clarity, we will give a direct proof (modulo some
basic results in \cite{AB2} and \cite{AB3}) of the result which we
need in the present paper.

In the following, $\cb$ will denote a field and $E=(E^0,E^1,r,s)$ a
finite quiver (oriented graph). Here $s(e)$ is the {\em source
vertex} of the arrow $e$, and $r(e)$ is the {\em range vertex} of
$e$. A {\em path} in $E$ is either an ordered sequence of arrows
$\alpha=e_1\dotsb e_n$ with $r(e_t)=s(e_{t+1})$ for $1\leqslant
t<n$, or a path of length $0$ corresponding to a vertex $v\in E^0$.
The paths $v$ are called trivial paths, and we have $r(v)=s(v)=v$. A
non-trivial path $\alpha=e_1\dotsb e_n$ has length $n$ and we define
$s(\alpha)=s(e_1)$ and $r(\alpha)=r(e_n)$. We will denote the length
of a path $\alpha$ by $|\alpha|$, the set of all paths of length $n$
by $E^n$, and the set of all paths by $E^*$.

We define a relation $\ge$ on $E^0$ by setting $v\ge w$ if there is
a path $\mu\in E^*$ with $s(\mu)=v$ and $r(\mu)=w$. A subset $H$ of
$E^0$ is called \emph{hereditary} if $v\ge w$ and $v\in H$ imply
$w\in H$. A set is \emph{saturated} if every vertex which feeds into
$H$ and only into $H$ is again in $H$, that is, if $s^{-1}(v)\neq
\emptyset$ and $r(s^{-1}(v))\subseteq H$ imply $v\in H$.

In this paper we will only need a special case of the general
construction. For any positive integer $r\ge 0$, we consider the
graph $E_r$ consisting of $r+1$ vertices $v_0,\dots ,v_r$ and for
each $1\le i\le r$, two arrows $a_i,b_i$ such that
$s(a_i)=r(a_i)=s(b_i)=v_i$ and $r(b_i)=v_{i-1}$. A picture of $E_3$
is shown in Figure \ref{Fig:QuiverE3}. Observe that $(E_r^0, \le)$
is a chain of length $r$. The monoid $M(E_r)$ of \cite{AMP} agrees
with the the monoid $M(E_r^0)$ associated with the poset $E_r^0$.

\begin{figure}[htb]
 \[
 {
 \xymatrix{
 & v_{3}\ar@(ul,ur)\ar[r] & v_{2} \ar@(ul,ur)\ar[r] & v_{1} \ar@(ul,ur)\ar[r] & v_{0}}
 }
 \]
\caption{The quiver $E_3$.} \label{Fig:QuiverE3}
\end{figure}
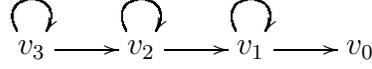

Observe that there is a unique maximal chain of hereditary saturated
subsets of $E_r^0$
$$\mathbf{H}_r : \{ v_0\}\subset \{ v_0,v_1 \}\subset \cdots \subset E_r^0.$$

Let us recall the construction from \cite{AB2} of the regular
algebra $Q_K(E)$ of a quiver $E$. Unfortunately the arrows are taken
in the present paper in the reverse sense as in \cite{AB2}. So we
recall the basic features of the regular algebra $Q_K(E)$ in terms
of the notation used here. The algebra $Q_K(E)$ fits into the
following commutative diagram of injective algebra morphisms:

\begin{equation}\notag \begin{CD}
K^{|E^0|} @>>> P_K(E) @>{\iota _{\Sigma}}>> P^{\text{rat}}_K(E) @>>> P_K((E))\\
@VVV @V{\iota _{\Sigma_1}}VV @V{\iota _{\Sigma _1}}VV @V{\iota _{\Sigma _1}}VV\\
P_K(\ol{E}) @>>> L_K(E) @>{\iota _{\Sigma}}>> Q_K(E) @>>> U_K(E)
\end{CD} \notag \end{equation}

Here $P_K(E)$ is the path $K$-algebra of $E$, $\ol{E}$ denotes the
inverse quiver of $E$, that is, the quiver obtained by changing the
orientation of all the arrows in $E$, $P_K((E))$ is the algebra of
formal power series on $E$, and $P^{\text{rat}}_K(E)$ is the algebra
of rational series, which is by definition the division closure of
$P_K(E)$ in $P((E))$ (which agrees with the rational closure
\cite[Observation 1.18]{AB2}). The maps $\iota_{\Sigma}$ and $\iota
_{\Sigma_1}$ indicate universal localizations with respect to the
sets $\Sigma$ and $\Sigma _1$ respectively. Here $\Sigma$ is the set
of all square matrices over $P_K(E)$ that are sent to invertible
matrices by the augmentation map $\epsilon \colon P_K(E)\to
K^{|E^0|}$. By \cite[Theorem 1.20]{AB2}, the algebra
$P^{\text{rat}}_K(E)$ coincides with the universal localization
$P_K(E)\Sigma ^{-1}$. For $v\in E^0$ with $s^{-1}(v)\ne \emptyset$,
write $s^{-1}(v)=\{e^v_1,\dots ,e^v_{n_v}\}$. The set
$\Sigma_1=\{\mu_v\mid v\in E^0,\,s^{-1}(v)\neq \emptyset\}$ is the
set of morphisms between finitely generated projective left
$P_K(E)$-modules defined by
 \begin{align*}
  \mu_v\colon P_K(E)v&\longrightarrow \bigoplus_{i=1}^{n_v}P_K(E)r(e^v_i)\\
  r&\longmapsto\left(re^v_1,\dotsc,re^v_{n_v}\right)
 \end{align*}
for any $v\in E^0$ such that $s^{-1}(v)\neq\emptyset$. By a slight
abuse of notation, we use also $\mu _v$ to denote the corresponding
maps between finitely generated projective left
$P^{\text{rat}}_K(E)$-modules and $P_K((E))$-modules respectively.

The following relations hold in $Q_K(E)$:

(1) $vv'=\delta _{v,v'}v$ for all $v,v'\in E^0$.

(2) $s(e)e=er(e)=e$ for all $e\in E^1$.

(3) $r(e)\ol{e}=\ol{e}s(e)=\ol{e}$ for all $e\in E^1$.

(4) $\ol{e}e'=\delta _{e,e'}r(e)$ for all $e,e'\in E^1$.

(5) $v=\sum _{\{ e\in E^1\mid s(e)=v \}}e\ol{e}$ for every $v\in
E^0$ that emits edges.

The Leavitt path algebra $L_K(E)=P_K(E)\Sigma_1^{-1}$ is the algebra
generated by $\{v\mid v\in E^0\}\cup \{e,\ol{e}\mid e\in E ^1\}$
subject to the relations (1)--(5) above. By \cite[Theorem 4.2]{AB2},
the algebra $Q_K(E)$ is a von Neumann regular hereditary ring and
$Q_K(E)=P_K(E)(\Sigma \cup \Sigma_1)^{-1}$. Here the set $\Sigma$
can be clearly replaced with the set of all square matrices of the
form $I_n+B$ with $B\in M_n(P_K(E))$ satisfying $\epsilon (B)=0$,
for all $n\ge 1$, since the map $\epsilon \colon P_K(E)\to
K^{|E^0|}$ is a split surjection. The graph monoid $M(E)$ of $E$ is
defined as the quotient monoid of $F=F_E$, the free abelian monoid
with basis $E^0$, modulo the congruence generated by the relations
$$v=\sum _{\{ e\in E^1\mid s(e)=v \}} r(e) $$
for every vertex $v\in E^0$ such that $s^{-1}(v)\ne \emptyset$. It
was proved in \cite[Theorem 3.5]{AMP} that the natural map
$M(E)\to\mon{L_K(E)}$ is an isomorphism, and in \cite[Theorem
4.2]{AB2} that the map $\mon{L_K(E)}\to \mon{Q_K(E)}$ induced by the
inclusion $L_K(E)\to Q_K(E)$ is also an isomorphism.

The structure of the lattice of ideals of $Q_K(E)$ can be neatly
computed from the graph. Let $H$ be a hereditary saturated subset of
$E^0$. Define the graph $E/H$ by $(E/H)^0=E^0\setminus H$ and
$(E/H)^1=\{e\in E^1: r(e)\notin H\}$, with the functions $r$ and $s$
inherited from $E$. We also define $E_H$ as the restriction of the
graph $E$ to $H$, that is $(E_H) ^0=H$ and $(E_H)^1=\{e\in E^1:
s(e)\in H\}$. For $Y\subseteq E^0$ set $p_Y=\sum _{v\in Y} v$.

\begin{prop}
\label{quotient}

(a) The ideals of $Q_K(E)$ are in one-to-one correspondence with the
order-ideals of $M(E)$ and consequently with the hereditary and
saturated subsets of $E$.

(b) If $H$ is a hereditary saturated subset of $E$, then
$Q_K(E)/I_K(H)\cong Q_K(E/H)$, where $I_K(H)$ is the ideal of
$Q_K(E)$ generated by the idempotent $p_H=\sum _{v\in H} v$.

(c) Let $H$ be a hereditary subset of $E^0$. Then the following
properties hold:
\begin{enumerate}
\item $P_K(E_H)=p_HP_K(E)=p_HP_K(E)p_H$,
\item $P_K((E_H))=p_HP_K((E))=p_HP_K((E))p_H$,
\item $P_K^{\text{rat}}(E_H)=p_HP_K^{\text{rat}}(E)=p_HP_K^{\text{rat}}(E)p_H$,
\item $Q_K(E_H)\cong p_HQ_K(E)p_H$.
\end{enumerate}
\end{prop}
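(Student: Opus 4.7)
Part~(a) is essentially a combination of two facts already at hand. Proposition~\ref{prop:wellknownideals}, applied to the von Neumann regular ring $Q_K(E)$, gives a lattice isomorphism $\mathcal L(Q_K(E))\to\mathcal L(\mathcal V(Q_K(E)))$, and \cite[Theorem~4.2]{AB2} provides $\mathcal V(Q_K(E))\cong M(E)$, which together match ideals of $Q_K(E)$ with order-ideals of $M(E)$. The further identification of order-ideals of $M(E)$ with hereditary saturated subsets of $E^0$ is the standard content of the graph monoid construction: an order-ideal is determined by the set of vertices it contains; the hereditary condition encodes downward closure along arrows, forced by the defining relation $v=\sum_{s(e)=v}r(e)$; and saturation encodes the reverse implication arising from the same relation.

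For part~(b), I would construct a surjection $\pi\colon Q_K(E)\to Q_K(E/H)$ that kills $p_H$. At the path algebra level the obvious map $P_K(E)\to P_K(E/H)$, sending every vertex of $H$ to $0$, has kernel equal to the ideal generated by $p_H$; Proposition~\ref{commadjoin} then extends this to a surjection $\pi$ on the universal localization $Q_K(E)$, so $\ker\pi\supseteq I_K(H)$. For the reverse inclusion I pass to monoids: applying $\mathcal V(-)$ and using Proposition~\ref{prop:wellknownideals} yields both $\mathcal V(Q_K(E))/\mathcal V(\ker\pi)\cong\mathcal V(Q_K(E/H))\cong M(E/H)$ and $\mathcal V(Q_K(E))/\mathcal V(I_K(H))\cong M(E)/M(E_H)\cong M(E/H)$, so the order-ideals $\mathcal V(\ker\pi)$ and $\mathcal V(I_K(H))$ of $M(E)$ coincide, and the lattice isomorphism forces $\ker\pi=I_K(H)$.

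For part~(c), items~(1) and~(2) are immediate from the hereditary property: any path in $E$ starting at a vertex of $H$ remains inside $H$, so left multiplication by $p_H$ picks out precisely the paths (respectively, formal sums of paths) in $E_H$, and these automatically satisfy $p_H(\cdot)=p_H(\cdot)p_H$. For (3) I would check both inclusions in the ambient ring $p_HP_K((E))p_H=P_K((E_H))$: the containment $p_HP^{\text{rat}}_K(E)p_H\supseteq P^{\text{rat}}_K(E_H)$ is automatic since the left-hand side is a corner of a division-closed subring containing $P_K(E_H)$, while the reverse inclusion uses that any rational expression in elements of $P_K(E)$ landing in the $p_H$-corner can be rewritten using only generators coming from paths in $E_H$. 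Item~(4) then follows because $Q_K(E)=P_K(E)(\Sigma\cup\Sigma_1)^{-1}$, and the elements of $\Sigma\cup\Sigma_1$ living in the $p_H$-corner are precisely the sets defining $Q_K(E_H)$. The step I expect to be most delicate is (3), since rational closure is in general sensitive to the ambient ring; I would carry it out by exploiting the concrete power-series realization and the fact that $p_HP^{\text{rat}}_K(E)p_H$ inherits division-closedness from $P^{\text{rat}}_K(E)$ inside $P_K((E))$.
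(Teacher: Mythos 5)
Your part (a) is exactly the paper's argument: combine Proposition \ref{prop:wellknownideals} with the isomorphism $\mathcal V(Q_K(E))\cong M(E)$ of \cite[Theorem 4.2]{AB2}, and then identify order-ideals of $M(E)$ with hereditary saturated subsets --- the paper simply quotes this last lattice isomorphism from \cite[Proposition 5.2]{AMP} rather than re-deriving it as you do. For parts (b) and (c) the paper gives no argument at all (it defers to \cite{AB3}), so your sketches supply content the text omits; their overall strategy is the right one, but two steps need shoring up. In (b), from $I_K(H)\subseteq\ker\pi$ and the fact that both quotient monoids are isomorphic to $M(E/H)$ you cannot yet conclude $\mathcal V(\ker\pi)=\mathcal V(I_K(H))$: two nested order-ideals can have isomorphic quotients without coinciding. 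The clean finish uses part (a) itself: $\mathcal V(\ker\pi)$ corresponds to the hereditary saturated set $\{v\in E^0:\pi(v)=0\}$, and $\pi(v)=v\ne 0$ in $Q_K(E/H)$ for $v\notin H$, so that set is exactly $H$. In (c)(4), observing that the elements of $\Sigma\cup\Sigma_1$ supported on the corner are those defining $Q_K(E_H)$ does not by itself show that the corner of a universal localization is the universal localization of the corner --- this is false for general localizations and corners. What makes it work here is heredity: $p_HP_K((E))(1-p_H)=0$, so cornering by $p_H$ is multiplicative and block lower-triangular matrices invert corner-wise (this is also the mechanism behind your item (3)); one then either verifies the universal property on both sides or uses the normal form for elements of $Q_K(E)$ to see that $p_HQ_K(E)p_H$ is generated by $P^{\text{rat}}_K(E_H)$ together with the arrows and ghost arrows of $E_H$. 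These are precisely the points for which the paper invokes \cite{AB3}.
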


\begin{proof}
(a) By \cite[Theorem 4.2]{AB2} we have a monoid isomorphism
$\mon{Q_K(E)}\cong M(E)$. Since $Q_K(E)$ is von Neumann regular, we
have a lattice isomorphism $\mathcal L (Q_K(E))\cong \mathcal
L(M(E))$, see Proposition \ref{prop:wellknownideals}. Now by
\cite[Proposition 5.2]{AMP} there is a lattice isomorphism $\mathcal
L(M(E))\cong \mathcal H$, where $\mathcal H$ is the lattice of
hereditary saturated subsets of $E^0$.

(b), (c)  See \cite{AB3}.
\end{proof}

We are ready to define the basic building blocks to apply the
diagram constructions.

\begin{defi}\label{buildblock}
Let $\mathbf K_r: \,K_0\subseteq K_1\subseteq \cdots \subseteq K_r$
be a chain of fields. Let $E_r$ be the quiver defined above and let
$\mathbf H_r :\, H_0=\{v_0\}\subset H_1=\{v_0,v_1\}\subset \cdots
\subset H_r=E_r^0$ be the unique maximal chain of hereditary
saturated subsets of $E_r^0$. Build rings $R_i$ $i=0,1,\dots ,r$
inductively as follows:

{\rm (1)} $R_0=Q_{K_r}(E_{H_0})\cong K_r$.

{\rm (2)} $R_i=Q_{K_{r-i}}(E_{H_i})+
Q_{K_{r-i}}(E_{H_i})p_{H_{i-1}}R_{i-1}p_{H_{i-1}}Q_{K_{r-i}}(E_{H_i}) $ for $1\le i\le r$.

Each $R_i$ is a unital $K_{r-i}$-algebra with unit $p_{H_i}$ and we
have $Q_{K_{r-i}}(E_{H_i})\subseteq R_i\subseteq
Q_{K_r}(E_{H_i})=p_{H_i}Q_{K_r}(E_r)p_{H_i}\subseteq Q_{K_r}(E_r)$.
Put $Q_{\mathbf{K}_r}(E_r;\mathbf{H}_r)=R_r$.
\end{defi}

\begin{theor}
\label{hereditarymixed} With the above notation, we have that
$Q_{\mathbf{K}_r}(E_r;\mathbf{H}_r)$ is a von Neumann regular ring
and the natural map $M(E_r)\to
\mon{Q_{\mathbf{K}_r}(E_r;\mathbf{H}_r)}$ is an isomorphism.
\end{theor}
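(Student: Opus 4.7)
My plan is to induct on $i$, proving simultaneously that each $R_i$ is von Neumann regular and that the natural monoid homomorphism $\phi_i \colon M(E_{H_i}) \to \mon{R_i}$ sending $v_j \mapsto [v_j]$ is an isomorphism. The base case $i=0$ is immediate: $R_0 \cong K_r$ is a field, so $\mon{R_0} \cong \mathbb{Z}^+ = M(E_{H_0})$ and regularity is trivial.

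For the inductive step, set $e=p_{H_{i-1}}$ and $A_i=Q_{K_{r-i}}(E_{H_i})$. First I establish the corner identity $eR_ie = R_{i-1}$: by Proposition \ref{quotient}(c), $eA_ie = Q_{K_{r-i}}(E_{H_{i-1}})$, and this embeds as a unital subring of $R_{i-1}$ through the scalar chain $K_{r-i} \subseteq K_{r-i+1}$; combined with the fact that $R_{i-1}$ is unital with unit $e$, the computation $eR_ie = eA_ie + (eA_ie)R_{i-1}(eA_ie)$ collapses to $R_{i-1}$. Second, I identify the quotient $R_i/R_ieR_i \cong Q_{K_{r-i}}(E_{\{v_i\}})$. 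By Proposition \ref{quotient}(b), the ambient quotient is $Q_{K_r}(E_{H_i})/I_{K_r}(H_{i-1}) \cong Q_{K_r}(E_{\{v_i\}})$. The key step is $R_i \cap I_{K_r}(H_{i-1}) = R_ieR_i$, which follows, via the decomposition $R_i = A_i + A_ieR_{i-1}eA_i$, from the equality $A_i \cap I_{K_r}(H_{i-1}) = A_ieA_i$; this last assertion reduces to the injectivity of the natural field inclusion $Q_{K_{r-i}}(E_{\{v_i\}}) \cong K_{r-i}(x) \hookrightarrow K_r(x) \cong Q_{K_r}(E_{\{v_i\}})$.

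Third, I will argue that the ideal $R_ieR_i$ is Morita equivalent, as rings with local units, to its corner $eR_ie = R_{i-1}$, implemented by the bimodules $R_ie$ and $eR_i$. By \cite[Proposition 3.1]{AM} this forces $R_ieR_i$ to be von Neumann regular, and by \cite[Corollary 5.6]{GoodLeavitt} the inclusion $R_{i-1} \hookrightarrow R_i$ induces an order-ideal isomorphism $\mon{R_{i-1}} \cong \mon{R_ieR_i}$; by induction this order-ideal is $M(E_{H_{i-1}})$. Since $R_i/R_ieR_i \cong Q_{K_{r-i}}(E_{\{v_i\}})$ is also regular, \cite[Lemma 1.3]{vnrr} yields that $R_i$ itself is von Neumann regular.

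For the monoid statement, the composition $M(E_{H_i}) \stackrel{\phi_i}{\longrightarrow} \mon{R_i} \stackrel{\psi_i}{\longrightarrow} \mon{Q_{K_r}(E_{H_i})} \cong M(E_{H_i})$, where the last isomorphism is \cite[Theorem 4.2]{AB2} and $\psi_i$ is induced by the unital inclusion $R_i \hookrightarrow Q_{K_r}(E_{H_i})$, acts as the identity on the generators $v_j$; hence $\phi_i$ is injective. For surjectivity, any $[g] \in \mon{R_i}$ projects to $n[v_i]$ in $\mon{R_i/R_ieR_i} = \mathbb{Z}^+$; lifting via \cite[Proposition 2.19]{vnrr} (or \cite[Lemma 3.1]{Aext}) one writes $[g] = n[v_i] + [h]$ with $[h] \in \mon{R_ieR_i} \cong M(E_{H_{i-1}})$, and by induction $[h]$ lies in the image of $\phi_i$, so $[g]$ does as well. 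The main obstacles will be setting up the Morita equivalence between $R_ieR_i$ and $R_{i-1}$ with its required bimodule isomorphisms, and verifying the intersection identity $R_i \cap I_{K_r}(H_{i-1}) = R_ieR_i$ inside the ambient ring $Q_{K_r}(E_{H_i})$, since the $R_{i-1}$-part of the presentation of $R_i$ must be controlled against the larger coefficient field appearing in $I_{K_r}(H_{i-1})$.
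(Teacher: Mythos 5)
Your overall architecture coincides with the paper's: induction along the chain, identification of the corner $p_{H_{i-1}}R_ip_{H_{i-1}}$ with the previous building block, Morita equivalence of the ideal $R_ip_{H_{i-1}}R_i$ with that corner to obtain its regularity via \cite[Proposition 3.1]{AM}, the extension lemma \cite[Lemma 1.3]{vnrr} for regularity of $R_i$, and injectivity of the monoid map through the sandwich $Q_{K_{r-i}}(E_{H_i})\subseteq R_i\subseteq Q_{K_r}(E_{H_i})$. (Your more explicit treatment of the corner identity and of $R_i\cap I_{K_r}(H_{i-1})=R_ip_{H_{i-1}}R_i$ is welcome; the paper delegates these identifications to \cite{AB3}.) The step that does not go through as written is the surjectivity of $\phi_i$. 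From $[\overline{g}]=n[\overline{v_i}]$ in $\mon{R_i/R_ip_{H_{i-1}}R_i}$, what \cite[Proposition 2.19]{vnrr} (or \cite[Lemma 3.1]{Aext}) actually yields is a pair of orthogonal decompositions $g=g_1+g_2$ and $n\cdot v_i=h_1+h_2$ with $g_1\sim h_1$ and $g_2,h_2$ idempotents over the ideal; this gives $[g]=[h_1]+[g_2]$ with $[h_1]+[h_2]=n[v_i]$, which is strictly weaker than your claimed $[g]=n[v_i]+[h]$. You cannot deduce that $[h_1]$ lies in the image of $\phi_i$ by computing inside $M(E_{H_i})$, because $\mon{R_i}\cong M(E_{H_i})$ is precisely what is being proved.

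The paper closes exactly this gap with two extra ingredients: (a) $R_i$ is \emph{strongly separative}, being an extension of two strongly separative regular rings (\cite[Theorem 5.2]{AGOP}); and (b) the absorption $[v_i]=[v_i]+[w]$ for every $[w]\in\mon{R_ip_{H_{i-1}}R_i}$, which follows from $v_i\sim v_i\oplus v_{i-1}$ (via $a_i,b_i,\overline{a}_i,\overline{b}_i$) together with the inductive identification $\mon{R_ip_{H_{i-1}}R_i}\cong M(E_{H_{i-1}})$, all of whose elements are absorbed by $[v_i]$. With these one gets $g\oplus f_1\sim (n\cdot v_i)\oplus f_2\sim n\cdot v_i\sim (n\cdot v_i)\oplus f_1$ for suitable idempotents $f_1,f_2$ over the ideal, and strong separativity cancels $f_1$ (since $[f_1]\le n[v_i]$) to give $g\sim n\cdot v_i$; the case $n=0$ is handled by the ideal alone. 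You should add both ingredients. A smaller omission: before invoking \cite[Proposition 3.1]{AM} you must verify that the ideal $R_ip_{H_{i-1}}R_i$ has local units, which is a hypothesis of Morita theory for non-unital rings; the paper does this by writing the ideal as a product $\bigl(Q_{K_{r-i}}(E_{H_i})p_{H_{i-1}}\bigr)\bigl(p_{H_{i-1}}R_{i-1}p_{H_{i-1}}\bigr)\bigl(p_{H_{i-1}}Q_{K_{r-i}}(E_{H_i})\bigr)$ and extracting the required idempotents from the regular ring $Q_{K_{r-i}}(E_{H_i})p_{H_{i-1}}Q_{K_{r-i}}(E_{H_i})$.
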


\begin{proof}
We proceed by induction on $r$. For $r=0$, we have
$Q_{\mathbf{K}_0}(E_0;\mathbf{H}_0)\cong K_0$ so the result
trivially holds.

Assume that $r>0$ and that the result holds for $r-1$. Let $I_{r-1}$
be the ideal of $Q_{\mathbf{K}_r}(E_r;\mathbf{H}_r)$ generated by
$p_{H_{r-1}}=\sum _{i=0}^{r-1} v_i$. By Proposition
\ref{quotient}(b), we have
\begin{equation}
\label{quoffield} Q_{\mathbf{K}_r}(E_r;\mathbf{H}_r)/I_{r-1}\cong
Q_{K_0}(E_r/H_{r-1})\cong K_0(z) ,
\end{equation} the rational function field in
one variable over the field $K_0$. Thus
$Q_{\mathbf{K}_r}(E_r;\mathbf{H}_r)/I_{r-1}$ is von Neumann regular
and $\mon{Q_{\mathbf{K}_r}(E_r;\mathbf{H}_r)/I_{r-1}}\cong \mathbb
Z^+$.

On the other hand we have (\cite{AB3})
$$p_{H_{r-1}}Q_{\mathbf{K}_r}(E_r;\mathbf{H}_r)p_{H_{r-1}}\cong Q_{\mathbf{K}^{r-1}}(E_{H_{r-1}};
\mathbf{H}_{r-1}) ,$$ where
$$\mathbf{K}^{r-1} : K_{1}\subseteq K_{2} \subseteq \cdots \subseteq K_{r} $$
and
$$\mathbf{H}_{r-1}:  H_0\subset H_1
\subset \cdots \subset H_{r-1} .$$

Thus, by induction hypothesis,
$p_{H_{r-1}}Q_{\mathbf{K}_r}(E_r;\mathbf{H}_r)p_{H_{r-1}}$ is von
Neumann regular and the natural map $M(E_{r-1})\to
\mon{p_{H_{r-1}}Q_{\mathbf{K}_r}(E_r;\mathbf{H}_r)p_{H_{r-1}}}$ is
an isomorphism. Now we want to check that $I_{r-1}$ has local units.
Note that
$$I_{r-1}=(Q_{K_0}(E_r)p_{H_{r-1}})(p_{H_{r-1}}R_{r-1}p_{H_{r-1}})(p_{H_{r-1}}Q_{K_0}(E_r)).$$
Given $x\in I_{r-1}$, we can write
$$x=\sum _{i=1}^n a_iz_ib_i, \quad \text{with } a_i\in
Q_{K_0}(E_r)p_{H_{r-1}},\,\, z_i\in p_{H_{r-1}}R_{r-1}p_{H_{r-1}}\,
\text{ and }\, b_i\in p_{H_{r-1}}Q_{K_0}(E_r) .$$ Since
$Q_{K_0}(E_r)p_{H_{r-1}}Q_{K_0}(E_r)$ is von Neumann regular
(\cite[Theorem 4.2]{AB2}), it has local units (see, e.g.
\cite[Example 1]{AM}) so there is an idempotent $e\in
Q_{K_0}(E_r)p_{H_{r-1}}Q_{K_0}(E_r)\subseteq I$ such that $ea_i=a_i$
and $b_i=b_ie$ for all $i$. It follows that $ex=x=xe$, as desired.

Since $I_{r-1}$ has local units and $I_{r-1}$ is Morita equivalent
to the regular ring \linebreak
$p_{H_{r-1}}Q_{\mathbf{K}_r}(E_r;\mathbf{H}_r)p_{H_{r-1}}$, it
follows from \cite[Proposition 3.1]{AM} that $I_{r-1}$ is von
Neumann regular. By using this and (\ref{quoffield}), it follows
from \cite[Lemma 1.3]{vnrr} that
$Q_{\mathbf{K}_r}(E_r;\mathbf{H}_r)$ is von Neumann regular.
Moreover since it is an extension of two strongly separative regular
rings, we get that $Q_{\mathbf{K}_r}(E_r;\mathbf{H}_r)$ is also
strongly separative (\cite[Theorem 5.2]{AGOP}).

Observe that by the Morita invariance of the functor $\mon{-}$ (see
\cite[Corollary 5.6]{GoodLeavitt}) it follows that the natural map
$M(E_{r-1})\to \mon{I_{r-1}}$ is an isomorphism.

We now want to compute the monoid
$\mon{Q_{\mathbf{K}_r}(E_r;\mathbf{H}_r)}$. Write
$Q:=Q_{\mathbf{K}_r}(E_r;\mathbf{H}_r)$. We have algebra embeddings
$Q_{K_0}(E_r)\to Q\to Q_{K_r}(E_r)$ which induce monoid
homomorphisms
$$M(E_r)\cong \mon{Q_{K_0}(E_r)}\to \mon{Q}\to \mon{Q_{K_r}(E_r)}\cong M(E_r).$$
Observe that the composition above is the identity, so it follows
that the map $$\mon{Q_{K_0}(E_r)}\to \mon{Q}$$ is injective. In
order to see that it is surjective we have to prove that every
idempotent in $Q$ is equivalent to a direct sum of basic idempotents
in $E_r^0$. Take an idempotent $e$ in $Q$. If $e\in I_{r-1}$ then
the result follows from the isomorphism $M(E_{r-1})\cong
\mon{I_{r-1}}$. If $e\notin I_{r-1}$, then $[e+I_{r-1}]=[1]$ by
(\ref{quoffield}), and thus we get $e\oplus f_1\sim 1\oplus f_2$ for
some idempotents $f_1,f_2\in I_{r-1}$. Now since $1\oplus f\sim 1$
for every idempotent $f\in I_{r-1}$, we get $$e\oplus f_1\sim
1\oplus f_2\sim 1\sim 1\oplus f_1,$$ so that $e\oplus f_1\sim
1\oplus f_1$. Since $f_1\le 1$ and $Q$ is strongly separative we get
that $e\sim 1$, as desired.

We conclude that the natural map $M(E_r)\to \mon{Q}$ is an
isomorphism, as wanted.
\end{proof}

\section{The proof of the main Theorem}
\label{sect:theproof}

This section is devoted to the proof of our main result, Theorem
\ref{main}. Let $M$ be a finitely generated primitive monoid with
all primes free, and let $\mathbb P=\mathbb P (M)$ be the poset of
primes of $M$. The proof of Theorem \ref{main} is naturally divided
into two big steps, as follows:

\medskip

Step 1: For a maximal element $p$ of $\mathbb P$, the poset $\mathbb
P\dnw p=\{q\in \mathbb P: q\le p\}$ has $p$ as a greatest element,
and we build in Proposition \ref{posets} a poset $\mathbb F (p)$,
with greatest element $p$, such that, for each $t\in \mathbb F (p)$,
the interval $[t,p]$ is a chain, together with an order-preserving
surjective map $\Psi_p\colon \mathbb F (p)\to \mathbb P \dnw p$. The
poset $\mathbb F (p)$ is built up from the different {\it maximal
chains} of $\mathbb P\dnw p$, and Proposition \ref{pullfgr} tells us
that $M(\mathbb P\dnw p)=M(\mathbb P)\mid p$ is built up from the
corresponding monoids $M(S)$ by a finite sequence of pullback
diagrams. Finally to each maximal chain $S$ we associate a basic
building block $Q(S)$ of the sort considered in Section
\ref{sect:building}, in such a way that, using Theorem
\ref{pullbacktheorem}, we are able to prove that the same sequence
of pullback diagrams applied now to the $K$-algebras $Q(S)$ leads us
to the algebra $Q_K(\mathbb F (p))$ of the poset $\mathbb F(p)$
(Definition \ref{defi: Q(M)}), so that Theorem 2.3 is proved for the
posets $\mathbb F (p)$. This is achieved in Theorem \ref{firstcase}.
This step uses Sections \ref{QQQ}, \ref{sect:pullbacks} and
\ref{sect:building}, as well as Proposition \ref{pullfgr}.

\medskip

Step 2: Let $\Psi_p\colon \mathbb F (p)\to \mathbb P\dnw p$ be the
map of posets described in Step 1. This map can be extended to a
surjective monoid homomorphism $\Psi_p\colon M(\mathbb F(p))\to
M(\mathbb P)\mid p$. We show in Proposition \ref{primitivepushout}
that $M(\mathbb P)\mid p$ is obtained from $M(\mathbb F (p))$ by a
finite sequence of crowned pushouts of the form considered in Lemma
\ref{structVcrownpushout}. Moreover $M(\mathbb P)$ is also obtained
from $\prod _{p\in \text{Max}(\mathbb P)} M(\mathbb P)\mid p$ by a
sequence of crowned pushouts as in \ref{structVcrownpushout}. Now
Proposition \ref{crownpushoutprop} gives a way to construct, from a
given von Neumann regular ring $R$ such that $\mon{R}\cong M$, and
from a suitable crowned pushout $M'$ of $M$, a von Neumann regular
ring $U$ such that $\mon{U}\cong M'$. Since we already know that
Theorem \ref{main} holds for the posets $\mathbb F (p)$ (Step 1),
the result in the general case follows by an inductive argument from
Proposition \ref{crownpushoutprop}, once we are able to identify
(modulo Morita-equivalence) the algebra $U$ for $R=Q_K(M)$,  with
the corresponding algebra $Q_K(M')$, where $M$ is one of the monoids
appearing in the sequence of crowned pushouts leading from the
different monoids $M(\mathbb F (p))$, for $p\in \text{Max}(\mathbb
P)$, to $M(\mathbb P)$, and $M'$ is obtained from $M$ by a crowned
pushout construction as shown in the proof of
\ref{primitivepushout}. This identification is done in Theorem 6.6.
Step 2 uses Sections \ref{QQQ} and \ref{sect:pushout}.

\bigskip

Let $M$ be a finitely generated primitive monoid such that all the
primes of $M$ are free. The {\it height} of a prime $p\in \mathbb P
(M)$ is the length $r$ of a maximal chain of primes $p_0< p_1<
\cdots < p_r=p$. The height of $M$ is the maximum of the heights of
its prime elements. Similar definitions apply to (elements of) a
finite poset.

We first state a purely order-theoretic result.

\begin{prop}\label{posets}
Let $\mathbb P(p)$ be a finite poset with a greatest element $p$,
and assume that the height of $p$ is $r$. Let $\mathcal S ^0(p)$ be
the set of all maximal chains in $\mathbb P(p)$ of the form
$p_0<p_1<\cdots <p_s=p$, and let $\Psi_0 \colon \bigsqcup_{S\in
\mathcal S^0(p)}S\to \mathbb P (p)$ be the natural surjective
identification map. Then there is a sequence of sets $\mathcal
S^1(p), \mathcal S^2(p),\dots ,\mathcal S^r(p)$ such that:
\begin{enumerate}[(i)]
\item $\mathcal S^{i}(p)$ consists of partially ordered sets $T$
with maximum element $p$ such that for each $t\in T$ the set
$[t,p]=\{x\in T: t\le x\}$ is a chain. Moreover $T$ contains a
chain $p_i< \cdots < p_r=p$ and every element in $T$ not in this
chain is below $p_i$, that is $T=\{x\in T: x<p_i\}\sqcup
\{p_i,\dots ,p_r\}$.
\item For each $i$ there is a surjective identification order-preserving map
$$\Psi_i \colon \bigsqcup_{S\in \mathcal
S^i(p)}S\to \mathbb P(p) .$$
\item $\mathcal S^r(p)$ is a singleton $\{\mathbb F (p)\}$, so there
is a surjective identification order-preserving map  $$\Psi
_r\colon \mathbb F(p)\to \mathbb P(p) .$$
\end{enumerate}
\end{prop}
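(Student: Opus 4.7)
The plan is to build $\mathcal{S}^1(p),\ldots,\mathcal{S}^r(p)$ by induction on $i$, amalgamating the members of $\mathcal{S}^{i-1}(p)$ one level at a time starting from the top. The base case $\mathcal{S}^0(p)$ is given by hypothesis, and $\Psi_0$ is the obvious identification map sending each copy of an element $q\in \mathbb{P}(p)$ lying in some maximal chain $S$ to $q$ itself; conditions (i) and (ii) hold trivially.

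For the inductive step, I would parameterize $\mathcal{S}^i(p)$ by the chains in $\mathbb{P}(p)$ of the form $C\colon q_i<q_{i+1}<\cdots <q_r=p$. Given such a $C$, let $\mathcal{T}(C)$ be the set of all $T\in \mathcal{S}^{i-1}(p)$ whose top chain $p_{i-1}^T<p_i^T<\cdots <p_r^T=p$ satisfies $\Psi_{i-1}(p_j^T)=q_j$ for every $i\leq j\leq r$. Then define
\[
T_C \;:=\; \Bigl(\bigsqcup_{T\in \mathcal{T}(C)} T\Bigr)\bigm/\!\sim,
\]
where $\sim$ identifies $p_j^T$ with $p_j^{T'}$ whenever $T,T'\in \mathcal{T}(C)$ and $i\leq j\leq r$; the partial order on $T_C$ is the one inherited from the constituent $T$'s. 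Thus the common top chain of $T_C$ is the image of any one of these chains, sitting above a disjoint union of "branches" $T\setminus\{p_i^T,\ldots ,p_r^T\}$, each attached at its own $p_{i-1}^T$. Set $\mathcal{S}^i(p)=\{T_C:C\text{ as above}\}$.

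To verify (i), I would check that each $T_C$ is a poset with maximum $p$, that the interval $[t,p]$ is a chain for every $t\in T_C$ (it equals $[t,p_{i-1}^T]_T\cup\{q_i,\ldots ,q_r\}$ for $t$ in the branch coming from $T$, which is a union of two chains glued at the covering pair $p_{i-1}^T<q_i$), and that $T_C=\{x:x<q_i\}\sqcup\{q_i,\ldots ,q_r\}$. For (ii), define $\Psi_i$ on each $T_C$ by $\Psi_i(q_j)=q_j$ on the top chain and by $\Psi_{i-1}$ on the branches; well-definedness, order-preservation, surjectivity, and the identification-map character of $\Psi_i$ are inherited directly from $\Psi_{i-1}$.

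Finally, at stage $i=r$ the only chain of the required form is the singleton $\{p\}$, so $\mathcal{T}(\{p\})=\mathcal{S}^{r-1}(p)$ and the resulting $T_{\{p\}}$ is obtained by gluing \emph{all} remaining posets of $\mathcal{S}^{r-1}(p)$ along their common top vertex $p$. Taking $\mathbb{F}(p):=T_{\{p\}}$ yields $\mathcal{S}^r(p)=\{\mathbb{F}(p)\}$, together with the induced surjective order-preserving identification $\Psi_r\colon \mathbb{F}(p)\to \mathbb{P}(p)$. There is no conceptual obstacle: the proposition is essentially combinatorial bookkeeping, and the substantive content arrives only afterwards, when this staged amalgamation is lifted to rings and monoids via the pullback and crowned-pushout machinery of Sections~\ref{sect:pullbacks} and~\ref{sect:pushout}.
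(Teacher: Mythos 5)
Your construction is correct and is essentially the paper's own argument: the paper amalgamates the members of $\mathcal S^{i-1}(p)$ according to the equivalence relation ``same covering chain $p_i<\cdots<p_r=p$,'' which is exactly your parameterization of the classes by the chains $C$, and it verifies (i) and (ii) by the same inheritance argument, with the final stage gluing everything at $p$ to produce the single poset $\mathbb F(p)$. The only cosmetic difference is that the paper phrases the classes via an explicit equivalence relation (treating singleton classes separately) rather than as fibers over chains, so no further comment is needed.
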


\begin{proof}
In order to make the process clear, we will start by constructing
$\mathcal S^1(p)$. Let us define an equivalence relation on the
set $\mathcal S^0(p)$ by setting $S\sim S'$ iff either $S=S'$ or
$S\ne S'$ and $p_i=p_i'$ for $i=1,\dots ,r$. In the latter case,
we are assuming that both $S$ and $S'$ have maximal length $r$.
Each chain in $\mathcal S^0(p)$ of length $<r$ forms a singleton
class with respect to $\sim$. Now $\mathcal S^1(p)$ contains all
the chains $S$ in a singleton class and one new partially ordered
set for each equivalence class with more than one element. We
construct this element for a given class $\{ S_1,\dots
,S_{\alpha}\}$. Let $p_{0,1},\dots ,p_{0,\alpha}$ be the minimal
elements in each one of the chains $S_1, \dots ,S_{\alpha}$, and
let $p_1< \cdots < p_r$ be the common part of the chains. Then the
set $T$ corresponding to this class has elements $p_{0,1}, \dots
,p_{0,\alpha},p_1,\dots ,p_r$ and the order relation determined by
$p_{0,i}< p_1< \cdots < p_r$ for all $i$. There is an obvious
surjective identification order-preserving map
$$\Psi_1 \colon \bigsqcup_{T\in \mathcal
S^1(p)}T\to \mathbb P(p) .$$

Now the construction of $\mathcal S ^i(p)$ from $\mathcal
S^{i-1}(p)$ is similar to the first step. By induction hypothesis,
the set $\mathcal S^{i-1}(p)$ consists of partially ordered sets $T$
with maximum element $p$ such that for each $t\in T$ the set
$[t,p]=\{x\in T: t\le x\}$ is a chain. Moreover $T$ contains a chain
$p_{i-1}< p_i< \cdots < p_r=p$ (with $p_{\ell-1}\in \rL(\mathbb
P(p), p_{\ell})$ for $\ell =i,\dots ,r$) and $T=\{x\in T:x<
p_{i-1}\}\sqcup \{p_{i-1},\dots ,p_r\}$. We also have surjective
identification maps
$$\Psi _{i-1}\colon \bigsqcup_{T\in \mathcal S^{i-1}(p)}T\to \mathbb
P (p) .$$

Now define an equivalence relation on $\mathcal S^{i-1}(p)$ by
$T\sim T'$ iff  $p_j=p'_j$ for $j=i,i+1,\dots ,r$, where $T$
contains the chain $p_{i-1}< p_{i}< \cdots < p_r=p$ and $T'$
contains the chain $p_{i-1}'< p_{i}'< \cdots < p_r'=p$, with
$p_{\ell-1}\in \rL(\mathbb P(p), p_{\ell})$ and $p'_{\ell-1}\in
\rL(\mathbb P(p), p'_{\ell})$ for $\ell =i,\dots ,r$.

The set $\mathcal S^i(p)$ contains all the posets $T$ in a
singleton class of $S^{i-1}(p)$ and one new partially ordered set
for each equivalence class with more than one element. For a given
class $\{ T_1,\dots ,T_k\}$, take
$$W=\{p_i,p_{i+1},\dots ,p_r\}\sqcup \bigsqcup _{j=1}^{k}\{t\in T_j:t< p_{i}\},$$
with the obvious order relation. The restriction of $\Psi _{i-1}$
to $\bigsqcup _{j=1}^{k} T_j$ factors through $W$, so we obtain a
surjective order-preserving map $\Psi_i \colon \bigsqcup_{W\in
\mathcal S^i(p)}W\to \mathbb P(p) $. For $i=r$, we get a unique
poset $\mathbb F (p)$.
\end{proof}

Note that the map $\Psi _r\colon \mathbb F (p)\to \mathbb P (p)$
is an isomorphism if and only if $\mathbb P (p)$ satisfies that
$[t,p]$ is a chain for every $t\in\mathbb P(p)$. The first step of
the proof of Theorem \ref{main} consists in showing it in the
particular case where the poset $\mathbb P (M)$ has a maximum
element $p$ and $[t,p]$ is a chain for every $t\in \mathbb P (M)$
(Theorem \ref{firstcase}).

 For any positive integer $r\ge 1$, we consider the graph $E_r$
consisting of $r+1$ vertices $v_0,\dots ,v_r$ and for each $1\le
i\le r$, two arrows $a_i,b_i$ such that $s(a_i)=r(a_i)=s(b_i)=v_i$
and $r(b_i)=v_{i-1}$. Let $M_r$ be the graph monoid of $E_r$. Then
$\mathbb P (M_r)$ is a chain of length $r$, that is, it consists
of $r+1$ primes $p_0,p_1,\dots ,p_r$ corresponding to
$v_0,v_1,\dots ,v_r$ respectively, such that
$$p_0< p_1< \cdots < p_r.$$
Now we consider a construction as in Section \ref{sect:building},
with respect to the following data. Let $K(t_1,t_2,\dots )$ be an
infinite purely transcendental extension of a field $K$. Select
positive integers $1\le k_1\le k_2\le \cdots \le k_r$ and set
$$\mathbf{K}_r : K(t_{k_r},t_{k_r+1},\dots )\subseteq
K(t_{k_{r-1}},t_{k_{r-1}+1},\dots )\subseteq \cdots \subseteq
K(t_1,t_2, \dots )=L .$$ We also consider the unique maximal chain
of hereditary saturated subsets of $E_r^0$
$$\mathbf{H}_r : \{ v_0\}\subset \{ v_0,v_1 \}\subset \cdots \subset E_r^0.$$
By Theorem \ref{hereditarymixed}, the ring
$Q_{\mathbf{K}_r}(E_r;\mathbf{H}_r)$ is von Neumann regular and
$\mathcal V (Q_{\mathbf{K}_r}(E_r;\mathbf{H}_r))=M_r$. First of all
we want to compare this construction with the construction in
Definition \ref{defi: Q(M)}. For this we consider a slight
variation, denoted by $Q(M_r,\sigma(k_1,\dots ,k_r))$, of the
definition in \ref{defi: Q(M)}, where the $K$-endomorphisms
$\sigma^{p_i}\colon L\to L$ used in (\ref{eq:A.8}) are replaced with
the ones defined by the rule
$$\sigma^{p_i}(t_j)=t_{j+k_i-k_{i-1}}.$$
Observe that $Q(M_r,\sigma(k_1,\dots ,k_r))$ has the same essential
properties as $Q_K(M_r)$ (cf. Section \ref{QQQ}).

\begin{prop}
\label{comparisonQMr} With the above notation we have an isomorphism
of $K$-algebras $$\gamma\colon Q(M_r,\sigma(k_1,k_2,\dots
,k_r))\longrightarrow Q_{\mathbf{K}_r}(E_r;\mathbf{H}_r)$$ such that
$\gamma (e(p_i))=v_i$ for $i\ge 0$, $\gamma (\alpha
_{p_i,p_{i-1}})=a_i$, $\gamma (\beta_{p_i,p_{i-1}})=b_i$ and $\gamma
(e(p_i)t_j)=v_it_{j+k_i-1}$ for $i\ge 1$.
\end{prop}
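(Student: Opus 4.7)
My plan is to define $\gamma$ on the prescribed generators, verify that the defining relations of $Q(M_r,\sigma(k_1,\ldots,k_r))$ are preserved, and then produce an explicit inverse by recursion along $R_0\subseteq R_1\subseteq\cdots\subseteq R_r$. To make the formula $\gamma(e(p_i)t_j)=v_it_{j+k_i-1}$ sensible at $i=0$ I adopt the convention $k_0:=1$, so that $\gamma(t_j)=\sum_{i=0}^{r}v_it_{j+k_i-1}$; each summand $v_it_{j+k_i-1}$ belongs to $v_iK_{r-i}\subseteq v_iR_i\subseteq R_r$, so $\gamma(t_j)\in R_r$.

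Since $\mathbb{P}(M_r)$ is a chain we have $n_{p_i}=1$ for $i\ge 1$, so the relations (\ref{eq:A.4}), (\ref{eq:A.5}) and (\ref{eq:A.9}) are vacuous. Using the Leavitt identity $v_i=a_i\bar a_i+b_i\bar b_i$ inside $Q_{K_{r-i}}(E_{H_i})\subseteq R_r$, the images $\gamma(e(p_i,p_{i-1}))=b_i\bar b_i$ and $\gamma(e'(p_i))=a_i\bar a_i$ are orthogonal idempotents summing to $v_i$, and relations (\ref{eq:A.1})--(\ref{eq:A.3}), (\ref{eq:A.10}), (\ref{eq:A.13}), (\ref{eq:A.14}) reduce to the standard Leavitt identities $\bar a_ia_i=v_i$, $\bar b_ib_i=v_{i-1}$, $\bar a_ib_i=\bar b_ia_i=0$. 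For $f\in\Sigma(p_i)$ the image $\gamma(f)$ is a polynomial in the loop $a_i$ with coefficients in $K_{r-i}$ and nonzero constant term, hence invertible in $v_iQ_{K_{r-i}}(E_{H_i})v_i\subseteq v_iR_rv_i$ by the construction of the regular algebra as a universal localization; invertibility of $\mu_{p_i,p_{i-1}}\in\Sigma_1$ is the same Leavitt relation. The only substantive check is the twist (\ref{eq:A.8}) for $\lambda=t_j$: using centrality of $t_m$ in $Q_{K_r}(E_r)$ together with $v_\ell b_i=\delta_{\ell,i}b_i$ and $b_iv_\ell=\delta_{\ell,i-1}b_i$,
\[
\gamma(t_j)b_i=v_it_{j+k_i-1}b_i=t_{j+k_i-1}b_i=b_iv_{i-1}t_{j+k_i-1}=b_i\gamma(t_{j+k_i-k_{i-1}}),
\]
which is exactly $\gamma$ applied to the two sides of (\ref{eq:A.8}); the companion identities (\ref{eq:A.15})--(\ref{eq:A.18}) are handled identically.

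For the inverse I construct $\delta\colon R_r\to Q(M_r,\sigma(k_1,\ldots,k_r))$ recursively. The base $\delta_0\colon K_rv_0\to Q(M_r,\sigma)$ is $t_jv_0\mapsto e(p_0)t_j$. Given $\delta_{i-1}$, the universal property of $Q_{K_{r-i}}(E_{H_i})$ as the regular algebra of the quiver $E_{H_i}$ over $K_{r-i}$ yields a homomorphism $Q_{K_{r-i}}(E_{H_i})\to Q(M_r,\sigma)$ determined by $v_\ell\mapsto e(p_\ell)$, $a_\ell\mapsto\alpha_{p_\ell,p_{\ell-1}}$, $b_\ell\mapsto\beta_{p_\ell,p_{\ell-1}}$ and $t_mv_\ell\mapsto e(p_\ell)t_{m-k_\ell+1}$ (for $m\ge k_i\ge k_\ell$); amalgamating this with $\delta_{i-1}$ along the decomposition $R_i=Q_{K_{r-i}}(E_{H_i})+Q_{K_{r-i}}(E_{H_i})p_{H_{i-1}}R_{i-1}p_{H_{i-1}}Q_{K_{r-i}}(E_{H_i})$ of Definition \ref{buildblock} yields $\delta_i$. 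A direct computation on generators then shows $\gamma\circ\delta$ and $\delta\circ\gamma$ are identities; for instance $\delta\gamma(t_j)=\sum_\ell\delta(v_\ell t_{j+k_\ell-1})=\sum_\ell e(p_\ell)t_{(j+k_\ell-1)-k_\ell+1}=\sum_\ell e(p_\ell)t_j=t_j$.

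The main obstacle is the consistency of $\delta_i$ on the overlap of its two defining parts and on mixed products $b_i(t_mv_{i-1})\bar b_i\in v_iR_iv_i$: one must check that the scalar $t_mv_\ell$ (for $\ell<i$ and $m\ge k_i$) has the same image whether regarded as an element of $Q_{K_{r-i}}(E_{H_i})$ or as inherited from $R_{i-1}$, and that sandwiching by $b_i$ and $\bar b_i$ correctly reconciles the coefficient rings $K_{r-i}$ at $v_i$ with $K_{r-(i-1)}$ at $v_{i-1}$. The specific shift $\sigma^{p_i}(t_m)=t_{m+k_i-k_{i-1}}$ used to define $Q(M_r,\sigma(k_1,\ldots,k_r))$ is designed precisely so that the rearranged twist $t_{m-k_i+k_{i-1}}\beta_{p_i,p_{i-1}}=\beta_{p_i,p_{i-1}}t_m$ matches the central-scalar identity $b_i(t_mv_{i-1})\bar b_i=t_m b_i\bar b_i$ in $Q_{K_r}(E_r)$ under $\delta$, making the amalgamation automatic.
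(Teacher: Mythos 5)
Your construction of $\gamma$ itself (definition on generators, checking the relations, in particular the twist \eqref{eq:A.8}, and checking that $\Sigma\cup\Sigma_1$ is inverted) is essentially the paper's first half and is fine. The divergence, and the problem, is in how you establish bijectivity. You try to build a global two-sided inverse $\delta\colon R_r\to Q(M_r,\sigma(k_1,\dots,k_r))$ by recursion, ``amalgamating'' a map on $Q_{K_{r-i}}(E_{H_i})$ with $\delta_{i-1}$ along the decomposition $R_i=Q_{K_{r-i}}(E_{H_i})+Q_{K_{r-i}}(E_{H_i})p_{H_{i-1}}R_{i-1}p_{H_{i-1}}Q_{K_{r-i}}(E_{H_i})$. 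But $R_i$ is defined in Definition \ref{buildblock} only as a \emph{sum of subalgebras inside} $Q_{K_r}(E_r)$; it is not presented as a pushout or amalgamated product, and it carries no universal property. To define a ring homomorphism out of such a sum you must show that every relation holding among elements of the two pieces inside $Q_{K_r}(E_r)$ is respected --- not merely check consistency ``on the overlap'' and on the mixed products $b_i(t_mv_{i-1})\bar b_i$ that you single out. You explicitly identify this as ``the main obstacle'' and then declare the amalgamation ``automatic'' because the shift $\sigma^{p_i}$ is well chosen; that is an assertion, not a proof, and it is exactly the step that would need a real argument. A secondary gap: when you invoke ``the universal property of $Q_{K_{r-i}}(E_{H_i})$'' to get the map into $Q(M_r,\sigma)$, you must verify that the images of \emph{all} elements of $\Sigma(\epsilon_i)$ --- square matrices of every size congruent to an invertible matrix modulo the augmentation ideal, not just scalar polynomials in the loop --- become invertible. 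This is a genuine computation (the paper reduces a general matrix via the triangular decomposition $A=A_i+B_i+\cdots+A_1+B_1$ and a determinant argument to membership of $e(p_j)-f_j(\alpha_j)$ in $\Sigma(p_j)$), and you omit it.

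The paper avoids your obstacle entirely: it never defines a map on all of $R_r$. It constructs, for each $i$, a one-sided section $\tau_i\colon Q_{K_{r-i}}(E_{H_i})\to Q(M_r,\sigma(k_1,\dots,k_r))$ with $\gamma\tau_i=\mathrm{id}$, using only the universal property of $Q_{K_{r-i}}(E_{H_i})$ as a localization of a path algebra (where the $\Sigma(\epsilon_i)$-inverting check above is the real work). This shows each building block $Q_{K_{r-i}}(E_{H_i})$ lies in the image of $\gamma$, whence $\gamma$ is surjective by the inductive definition of $R_r$. Injectivity is then obtained separately and cheaply from Lemma \ref{lem:injectivity}, i.e.\ from the direct-sum decomposition \eqref{eq:A.22} of $Q_K(\mathbb P)$. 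I would recommend you replace the recursive construction of $\delta$ by this two-pronged argument, or else supply an actual presentation of $R_i$ that justifies the amalgamation.
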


\begin{proof} Write $\alpha _i=\alpha _{p_i,p_{i-1}}$ and $\beta
_i=\beta_{p_i,p_{i-1}}$. Observe that $Q(M_r,\sigma(k_1,k_2,\dots
,k_r))=\mathcal A (\Sigma\cup \Sigma_1)^{-1}$ (see Definition
\ref{defi: Q(M)}).

Define a $K$-algebra map $\widetilde{\gamma}\colon \mathcal A\to
Q_{\mathbf{K}_r}(E_r;\mathbf{H}_r)$ by the rules:

\begin{align*}
\widetilde{\gamma}(e(p_i)) & =v_i,\quad \widetilde{\gamma}(\alpha
_i)=a_i, \quad \widetilde{\gamma}(\beta _i)=b_i,\\
\widetilde{\gamma}(e(p_i,p_{i-1})) & =b_i\ol{b}_i,\quad
\text{and}\quad \widetilde{\gamma}(e(p_i)t_j)=v_it_{j+k_i-1}
\end{align*}
 Since relations (\ref{eq:A.3}), (\ref{eq:A.8}) and
(\ref{eq:A.10}) are respected by $\widetilde{\gamma}$, the map
$\widetilde{\gamma}$ is well-defined. Clearly the map
$\widetilde{\gamma}$ is $(\Sigma \cup \Sigma _1)$-inverting, so
that we have a well-defined $K$-algebra morphism
$$\gamma \colon Q(M_r,\sigma(k_1,\dots ,k_r))\to
Q_{\mathbf{K}_r}(E_r;\mathbf{H}_r).$$

In order to show that $\gamma $ is surjective, it is enough to show,
in view of Definition \ref{buildblock}, that
$Q_{K_{r-i}}(E_{H_i})\subseteq \text{Im}(\gamma)$. For this, we will
define a map $\tau_i\colon Q_{K_{r-i}}(E_{H_i})\to
Q(M_r,\sigma(k_1,k_2,\dots ,k_r))$  such that $\gamma\tau_i
=\text{id}_{Q_{K_{r-i}}(E_{H_i})}$. Recall that the algebra
$Q_{K_{r-i}}(E_{H_i})$ is the universal localization of the path
algebra $P_{K_{r-i}}(E_{H_i})$ with respect to $\Sigma
(\epsilon_i)\cup \Xi_i$, where $\epsilon_i \colon
P_{K_{r-i}}(E_{H_i})\to  K_{r-i}^{i+1}$ is the augmentation map, and
$\Xi _i=\{\mu_{v_j}:j=1,\dots ,i\}$, with
\begin{align*}
\mu_{v_j}\colon P_{K_{r-i}}(E_{H_i})v_j&\longrightarrow
P_{K_{r-i}}(E_{H_i})v_j\oplus P_{K_{r-i}}(E_{H_i})v_{j-1}.\\
r&\longmapsto\left( ra_j, rb_j\right)
\end{align*}
Define a map $\widetilde{\tau }\colon P_{K_{r-i}}(E_{H_i})\to (\sum
_{j=0}^i e(p_i))Q(M_r,\sigma(k_1,\dots ,k_r))(\sum _{j=0}^i e(p_i))$
by the rule
$$\widetilde{\tau}(v_j)=e(p_j),\quad \widetilde{\tau}(a_j)=\alpha
_j,\quad \widetilde{\tau}(b_j)=\beta _j,\quad
\widetilde{\tau}(v_jt_{\ell})=e(p_j)t_{\ell-k_j+1}.$$ The map is
clearly $\Xi_i$-inverting. We have to show that $\widetilde{\tau}$
is $\Sigma (\epsilon_i)$-inverting. Set $P=P_{K_{r-i}}(E_{H_i})$,
and take any matrix $A\in M_n(P)$ such that $\epsilon_i (A)=0$.
Observe that we can write
$$A=A_i+B_i+A_{i-1}+B_{i-1}+\cdots + A_1+B_1,$$
with $A_j\in v_jM_n(P)v_j$ and $B_j\in v_jM_n(P)(v_{j-1}+\cdots
+v_0)$, and with $\epsilon _i(A_j)=\epsilon _i(B_{\ell})=0$ for all
$j,\ell$. Since
$$(I_n-A)^{-1}=(I_n-A_i)^{-1}(I_n-B_i)^{-1}(I_n-A_{i-1})^{-1}\cdots (I_n-A_1)^{-1}
(I_n-B_1)^{-1}$$ in any ring in which all the terms in the RHS are
invertible, it suffices to show that the terms
$I_n-\widetilde{\tau}(A_j)$ and $I_n-\widetilde{\tau}(B_j)$ are
invertible in $Q(M_r,\sigma(k_1,\dots ,k_r))$ for all $j\le i$.
Since $B_j^2=0$, the matrices $I_n-B_j$ are obviously invertible. On
the other hand, since, for $j\ge 1$, we have $v_jPv_j\cong
K_{r-i}[z]$, we get that
$$\text{det}(I_n-\widetilde{\tau}(A_j))=1-f_j(\alpha
_j)$$ where $f_j\in L[z]$  satisfies $f_j(0)= 0$. It follows that
$e(p_j)-f_j(\alpha _j)\in \Sigma (p_j)\subseteq \Sigma $ (see
(\ref{eq:A.6}) and (\ref{eq:A.7})) and thus $I_n-\widetilde{\tau
}(A_j)$ is invertible in $Q(M_r,\sigma(k_1,\dots ,k_r))$ for all
$j$. It follows that there is a unique extension of
$\widetilde{\tau}$ to a $K_{r-i}$-algebra map
$$\tau_i \colon Q_{K_{r-i}}(E_{H_i})\longrightarrow   (\sum
_{j=0}^i e(p_i))Q(M_r,\sigma(k_1,\dots ,k_r))(\sum _{j=0}^i
e(p_i)).$$ Since the composition $\gamma\tau _i$ is the identity on
$P_{K_{r-i}}(E_{H_i})$, it must be the identity also on
$Q_{K_{r-i}}(E_{H_i})$.

We have shown that the map $\gamma $ is surjective. The injectivity
of $\gamma$ follows easily from Lemma \ref{lem:injectivity}.
\end{proof}

We are now ready to show the following particular case of our main
result:

\begin{theor}\label{firstcase} Let $M$ be a finitely generated primitive monoid
such that all its primes are free. Assume that $\mathbb P (M)$ has a
greatest element $p$ and that $[t,p]$ is a chain for every $t\in
\mathbb P (M)$. Then $Q_K(M)=Q_K(\mathbb P (M))$ is a von Neumann
regular ring and the natural monoid homomorphism
$$\psi \colon M\rightarrow \mon{Q_K(M)}$$ is an isomorphism.
\end{theor}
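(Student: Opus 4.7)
The plan is to induct on the number of maximal chains in $\mathbb P := \mathbb P(M)$. The hypothesis that $[t,p]$ is a chain for every $t\in \mathbb P$ means $\mathbb P$ has the shape of the poset $\mathbb F(p)$ from Proposition \ref{posets}: it can be built up from its maximal chains by iterated gluing along common upper segments, and Proposition \ref{pullfgr} mirrors this construction at the monoid level. In the base case $\mathbb P$ is a single maximal chain $p_0 < p_1 < \cdots < p_r = p$; a suitable choice of indexing integers $k_1 \le \cdots \le k_r$ (compatible with the $\sigma^{p_i}$ appearing in Definition \ref{defi: Q(M)}), together with Proposition \ref{comparisonQMr}, identifies $Q_K(M)$ with a ring $Q_{\mathbf{K}_r}(E_r;\mathbf{H}_r)$, so Theorem \ref{hereditarymixed} gives the conclusion.

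For the inductive step, choose a branching level $i$: an element $p_i$ with lower covers $q^{(1)},\dots,q^{(m)}$ ($m\ge 2$) such that the top segment $S := \{p_i, p_{i+1}, \dots, p_r\}$ is common to every maximal chain through $p_i$. For each $j$, set $T_j := S \cup \{t \in \mathbb P : t\le q^{(j)}\}$; each $T_j$ is again of the form $\mathbb F(\cdot)$ but with strictly fewer maximal chains, so by induction $Q_K(T_j)$ is a von Neumann regular ring realizing $M(T_j)$. By the shape of $\mathbb P$ and Proposition \ref{pullfgr}, the monoid $M$ is the (iterated) pullback of the $M(T_j)$ over $M(S)$.

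At the ring level, define surjective $K$-algebra maps $\pi_j\colon Q_K(T_j) \twoheadrightarrow Q_K(S)$ as the quotient by the ideal generated by the idempotents $e(t)$ for $t < p_i$ in $T_j$ (cf.\ Remark \ref{remark:quotients}), and maps $\rho_j \colon Q_K(\mathbb P) \to Q_K(T_j)$ that preserve the generators attached to the $j$-th branch and collapse the remaining branching data into scalars from $L = K(t_1,t_2,\dots)$ via carefully chosen shifts of the transcendental variables, exactly as in Example \ref{exam:main}. The shifts are arranged so that $\pi_{j_1}\circ \rho_{j_1} = \pi_{j_2}\circ \rho_{j_2}$ for all $j_1,j_2$. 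Using the presentation of $Q_K(\mathbb P)$ by generators and relations, together with the normal form from Lemma \ref{normalform} and the injectivity tool of Lemma \ref{lem:injectivity} to control the kernel, one verifies that
\begin{equation*}
\begin{CD}
Q_K(\mathbb P) @>(\rho_j)_j>> \prod_j Q_K(T_j) \\
@VVV @VV(\pi_j)_jV \\
Q_K(S) @>\mathrm{diag}>> \prod_j Q_K(S)
\end{CD}
\end{equation*}
is a pullback of $K$-algebras.

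It then remains to transfer this ring pullback to the monoid level. Apply Theorem \ref{pullbacktheorem}: $Q_K(\mathbb P)$ is automatically von Neumann regular and strongly separative (Proposition \ref{prop:regs}), and the monoid pullback conclusion reduces to verifying the $K_1$-condition (\ref{eq:pullback}) on a generator of each finitely generated trace ideal. The decisive computation, which I expect to be the main obstacle, generalizes the one sketched in Example \ref{exam:pullbackchecked}: the image of $K_1(Q_K(T_j))$ in $K_1$ of the relevant quotient corner of $Q_K(S)$ consists of classes of rational expressions localized at a specific family of primes in the polynomial ring $K[t_1,t_2,\dots]$, and an element-by-element splitting shows that the sum of these images exhausts all of $L^\times$ (and, for nontrivial idempotents, the corresponding product of multiplicative groups of rational function fields). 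Combined with the monoid pullback given by Proposition \ref{pullfgr}, this gives $\mon{Q_K(\mathbb P)} \cong M$, closing the induction. The hardest part will be arranging the variable shifts so that the comparison map with $Q_K(\mathbb P)$ is an isomorphism rather than merely a surjection, and then extracting enough structure from units of $Q_K(S)$ to establish (\ref{eq:pullback}) in full generality.
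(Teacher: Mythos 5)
Your plan is essentially the proof in the paper: decompose $\mathbb P$ into pieces glued along a common upper chain, realize the chains via Proposition \ref{comparisonQMr} and Theorem \ref{hereditarymixed}, exhibit $Q_K(\mathbb P)$ as an iterated pullback of the algebras of the pieces, and invoke Theorem \ref{pullbacktheorem} together with Proposition \ref{pullfgr}. The only organizational difference is that you peel off one (topmost) branch point at a time and induct on the number of maximal chains, whereas the paper glues all branch points of a given height simultaneously, running through the sets $\mathcal S^0(p),\dots,\mathcal S^r(p)$ of Proposition \ref{posets}; this is cosmetic.

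Two of your deferred steps deserve comment. First, the base of the pullback cannot be taken to be the literal $Q_K(S)$ of Definition \ref{defi: Q(M)}: killing everything below $p_i$ in $Q_K(T_j)$ turns $\alpha_{p_i,q^{(j)}}$ into a unit, so the quotient corner at $p_i$ is $L(z_j)$ rather than $L$, and one must re-index the transcendentals to identify all $k$ quotients with a single algebra compatibly with the maps $\rho_j$. The paper handles this by introducing the parametrized variants $Q(M(T),(k_1,\dots,k_r))$ with adjusted endomorphisms $\sigma^{p}$, and it proves the ring-level pullback not by a normal-form computation on all of $Q_K(\mathbb P)$ but by the diagram (\ref{eq:diagQ(T)}), using Proposition \ref{prop:ideallattice} to see that the ideals $G_j$ generated by the $e(q^{(j)})$ form a direct sum mapping isomorphically onto $\prod_j I_j$. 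Second, and more substantively, condition (\ref{eq:pullback}) is not that the images $(\pi_j)_*K_1(Q(T_j))$ sum to $K_1(Q)$; for each $j$ it requires $K_1(Q)=(\pi_j)_*K_1(Q(T_j))+\bigcap_{\ell\ne j}(\pi_\ell)_*K_1(Q(T_\ell))$, which is strictly stronger once $p_i$ has three or more lower covers. The paper verifies it by exhibiting, for each $j$, the single class $[e(p_i)t_j]$, which generates $K_1(Q)$ modulo $(\pi_j)_*K_1(Q(T_j))$ (since $\partial_j([e(p_i)t_j])$ generates $K_0(I_j)\cong\mathbb Z$ and $K_0(I_j)\to K_0(Q(T_j))$ vanishes) and which visibly lies in every $(\pi_\ell)_*K_1(Q(T_\ell))$ for $\ell\ne j$ by the choice of re-indexing. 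Your ``splitting off powers of $t_j$'' would produce exactly this witness, but as stated your target (``the sum of these images exhausts $L^\times$'') is the wrong condition to check.
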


\begin{proof} Write $n_p:=|\rL(\mathbb P,p)|$ for $p\in \mathbb P:=\mathbb P(M)$.
We will use the notation of proposition \ref{posets}. For each $S\in
\mathcal S^0 (p)$ of the form $S=\{p_0,p_1,\dots ,p_s\}$, where
$p_0<p_1<\cdots < p_s=p$ is a maximal chain in $\mathbb P$, we
consider the $K$-algebra $Q(S)$ defined by $Q(S)=Q(M(S),
\sigma(k_1,\dots ,k_s))$, where $k_1=1$ and $k_2=n_{p_1}$, and in
general $k_{j}=k_{j-1}+n_{p_{j-1}}-1$, for $j=2,\dots ,s$. Indeed we
will find more useful to use simply the notation $Q(S)=Q(M(S),
(1,n_{p_1},\dots ,n_{p_{s-1}}))$ to denote this algebra. Recall that
in $Q(S)$ the $K$-endomorphisms $\sigma^p\colon L\to L$ used in
relation (\ref{eq:A.8}) are given by
$$\sigma^{p_1}(t_{\ell})=t_{\ell},\qquad
\sigma^{p_j}(t_{\ell})=t_{\ell+n_{j-1}-1} \quad (2\le j\le s).$$
We are going to give a corresponding definition of $Q(T)$ for
every $T\in \mathcal S ^i(p)$. Take any $T\in \mathcal S^i(p)$.
Then $T$ is a subset of $\mathbb P$ of the form
$$T=\{p_i,p_{i+1},\dots ,p_r\}\sqcup \{q\in \mathbb P:q<
p_{i}\},$$ where $p_i<p_{i+1}<\cdots <p_r=p$ is a chain that
cannot be refined. (This uses of course our hypothesis that
$[t,p]$ is a chain for every $t$ in $\mathbb P$.) Define
$$Q(T)=Q_K(M(T), (1,n_{p_{i+1}},\dots ,n_{p_{r-1}})),$$ where the
$K$-algebra $Q_K(M(T), (1,n_{p_{i+1}},\dots ,n_{p_{r-1}}))$ is the
one defined in \ref{defi: Q(M)} with the only difference that we use
for $\sigma^{p_{i+1}},\sigma^{p_{i+2}},\dots ,\sigma^{p_r}$ the
$K$-endomorphisms of $L$ defined by:
\begin{equation} \label{eq:B.12}
\sigma^{p_{i+1}}(t_{\ell})=t_{\ell},\qquad
\sigma^{p_j}(t_{\ell})=t_{\ell+n_{j-1}-1} \quad (i+2\le j\le r).
\end{equation}
All the $\sigma^q$ for $q\le p_i$ are the same as in the
definition of $Q_K(M(T))$, and agree with the ones used for these
primes in the definition of $Q_K(\mathbb P(M))$, i.e.
$\sigma^q(t_{\ell})=t_{\ell+n_q-1}$ when $q\le p_i$.

We are going to prove by induction on $i$ the following statement:

\medskip

\noindent (S$_i$) {\it For every $0\le i\le r$ and every $T\in
\mathcal S^i(p)$, the $K$-algebra $Q(T)$ is von Neumann regular
and $\mon{Q(T)}\cong M(T)$. In particular $Q(T)$ is a strongly
separative von Neumann regular ring.}

\medskip

For $i=0$, this is given by Proposition \ref{comparisonQMr}.
Assume that $r\ge i>0$ and that statement (S$_{i-1}$) holds. We
are going to show (S$_i$). Let $\{T_1,\dots ,T_{k}\}$ be a
non-trivial class in $\mathcal S^{i-1}(p)$, and let
$$q_j<p_i<p_{i+1}<\cdots <p_r=p$$
be the chain corresponding to $T_j$, for $j=1, \dots ,k$. In order
to simplify notation, we will denote $n_{\ell}=n_{p_{\ell}}$ for
$i\le \ell\le r$, and similarly $\alpha _{ij}:=\alpha _{p_i,q_j}$,
$\beta _{ij}:=\beta_{p_i,q_j}$, $\alpha _{\ell}:=\alpha
_{p_{\ell},p_{\ell -1}}$, $\beta _{\ell}:=\beta _{p_{\ell},p_{\ell
-1}}$ for $i+1\le \ell \le r$. Let $T\in \mathcal S^i(p)$ be given
by
$$T=\{p_i,p_{i+1},\dots ,p_r\}\sqcup \bigsqcup _{j=1}^{k}\{t\in T_j:t< p_{i}\}.$$
Observe that $\rL(T,p_i)=\rL(\mathbb P,p_i)=\{q_1,\dots ,q_k\}$, so
that $k=n_i$, and $\rL(T,p_u)=\{p_{u-1}\}$ for $r\ge u\ge i+1$. By
Proposition \ref{pullfgr}, we see that $M(T)$ is the pullback of the
family of natural maps $M(T_j)\to M(\Lambda)$, where $\Lambda $ is
the chain $\{p_i,\dots ,p_r\}$. (In categorical terms, the family of
maps $\{M(T)\to M(T_j): j=1,\dots ,k\}$, defined by sending all
$q\in T$, with $q\le q_{\ell}$ for $\ell\ne j$, to $0$, are the
limit of the family of maps $\{M(T_j)\to M(\Lambda): j=1,\dots ,k\}$
in the category of monoids).

Set $Q:=Q(\Lambda ,(n_{i}+1,n_{i+1},\dots ,n_{r-1}))$. Let $I_j$
be the ideal of $Q(T_j)$ generated by $e(q_j)$. Then there is a
natural isomorphism $\widetilde{\pi}_j\colon Q(T_j)/I_j\to Q$
which is essentially the identity on generators involving
$p_{i+1}, \dots ,p_r$ and which, in level $i$, acts as follows:
\begin{equation}
\label{eq:6.16} \widetilde{\pi}_j(e(p_i)t_{\ell})=
\begin{cases}e(p_i)t_{\ell}\quad & \text{if $\ell <j$,} \\
               e(p_i)t_{\ell+1} \quad   &\text{if $\ell\ge j$,}
      \end{cases}
\end{equation}
\begin{equation}
\label{eq:6.17} \widetilde{\pi}_j(\alpha _{ij})=e(p_i)t_j.
\end{equation}
Let $\pi _j\colon Q(T_j)\longrightarrow Q$ be the map defined by
the composition of the canonical projection $Q(T_j)\to Q(T_j)/I_j$
and the isomorphism $\widetilde{\pi}_j$. Then $\pi _j$ are
surjective maps with kernel $I_j$. Now, for $1\le j\le k$,
consider the $K$-algebra morphism $\rho _j\colon Q(T)\to Q(T_j)$
which annihilates all $e(q_{\ell})$ with $\ell\ne j$, is defined
as the identity in levels $i+1,\dots ,r$ and also below $q_j$, and
on level $i$ is defined as follows
\begin{equation}
\label{eq:6.18} \rho _j(\alpha_{i\ell})=
\begin{cases}e(p_i)t_{\sigma _j(\ell)}\quad & \text{if $\ell \ne j$,} \\
             \alpha_{ij}\quad & \text{if $\ell =j$} ,
      \end{cases}
\end{equation}
\begin{equation}
\label{eq:6.19} \rho _j(\beta _{i\ell})=\begin{cases} 0\quad & \text{if $ \ell\ne j$,} \\
             \beta_{ij}\quad & \text{if $\ell =j$,}
      \end{cases}
\end{equation}
\begin{equation}
\label{eq:6.20} \rho _j(e(p_i)t_u)=e(p_i)t_{u+k-1}.
\end{equation}
It is easily seen that these assignments give a well-defined
morphism $\rho _j$, that is, all relations between generators in
Definition \ref{defi: Q(M)} are preserved and the images by $\rho
_j$ of maps in $\Sigma\cup \Sigma _1$ are invertible in $Q(T_j)$.
The homomorphism $\rho _j$ is clearly surjective. We have
$\pi_j\circ \rho _j=\pi _{j'}\circ \rho _{j'}$ for all $j,j'$, and
so we get a canonical map $\rho \colon Q(T)\to P$, where $P$ denotes
the limit (pullback) of the maps $\pi_j\colon Q(T_j)\to Q$. We want
to see that $\rho$ is an isomorphism. Observe that $P$ has an ideal
of the form $I:=I_1\times \cdots \times I_k$ and that $P/I\cong Q$
canonically. The idea to show that $\rho$ is an isomorphism is to
prove, using the results in Section \ref{QQQ}, that the same
structure holds in $Q(T)$.

Let $G_j$ be the ideal of $Q(T)$ generated by $e(q_j)$. Let $A_j=
T\dnw q_j=\{x\in T:x\le q_j\}$, which is a lower subset of $T$. By
construction of $T$, we have that $A_j\cap A_{\ell}=\emptyset $
for $j\ne \ell$. It follows from (the proof) of Proposition
\ref{prop:ideallattice}(2) that the sum $\sum _{j=1}^k G_j$ is a
direct sum. Observe that the kernel of the map $\pi _j\circ \rho
_j$ is precisely $\bigoplus _{j=1}^k G_j$.
 On the other hand, the kernel
of the map $\rho _j\colon Q(T)\to Q(T_j)$ is precisely the ideal
$\bigoplus _{\ell \ne j}G_{\ell}$, and it follows that $\rho _j$
induces an isomorphism $G_j\cong I_j$ for $j=1,\dots ,k$.

Hence we have a commutative diagram with exact rows:

\begin{equation}
\label{eq:diagQ(T)}
\begin{CD}
0 @>>> \prod_{j=1}^k G_j @>>> Q(T) @>>> Q  @>>> 0  \\
&   & @V\cong VV  @VV\rho V  @V=VV  &                              \\
0 @>>> \prod_{j=1}^k I_j @>>> P  @>>> Q  @>>> 0
\end{CD}
\end{equation}

\noindent and thus $\rho\colon Q(T)\to P$ is an isomorphism. It
follows from Theorem \ref{pullbacktheorem} and $(\text{S}_{i-1})$
that $Q(T)$ is a strongly separative von Neumann regular ring.

By induction hypothesis, (S$_{i-1}$) holds, so that $Q(T_j)$ is a
von Neumann regular ring and the natural map $M(T_j)\to
\mon{Q(T_j)}$ is an isomorphism, for $1\le j\le k$. In particular
we see that $K_0(I_j)\cong \mathbb Z$, a generator being
$[e(p_i,q_j)]=[e(q_j)]$.

Now let us show that every idempotent $e$ in $Q(T)$ such that
$Q(T)eQ(T)\nsubseteq \prod_{j=1}^k G_j$ must satisfy
$Q(T)eQ(T)=Q(T)e(p_u)Q(T)=Q(T)(\sum _{q\le p_u}e(q))Q(T)$ for some
$u=i,\dots ,r$. Indeed we have $Q(T)eQ(T)+\prod_{j=1}^k
G_j=Q(T)e(p_u)Q(T)$, for some $u=i,\dots ,r$, and so
$$e(p_u)\oplus f_1\sim m\cdot e\oplus f_2,$$ where $f_2$ is a
finite direct sum of basic idempotents in $G_j$, $j=1, \dots ,k$.
Thus $e(p_u)\sim e(p_u)\oplus f_2$ and we get
$$e(p_u)\oplus f_1 \oplus f_2 \sim m\cdot e\oplus f_2.$$
Since $Q(T)$ is strongly separative, we get $e(p_u)\oplus f_1\sim
 m\cdot e$ and so $Q(T)e(p_u)Q(T)=Q(T)eQ(T)$.

Therefore, we only need to check the conditions
(\ref{eq:pullback}) of Theorem \ref{pullbacktheorem} for the
idempotents $\sum _{q\le p_u}e(q)$, $i\le u\le r$, and indeed,
without loss of generality, we can assume that
$e=1=(1_{Q(T_1)},\dots ,1_{Q(T_k)})$. The diagrams are of the
form:

\begin{equation}
\label{equ:6.xxxxxxxxxxxxxx}
\begin{CD}
K_1(Q(T_j)) @>(\pi _j)_*>> K_1(Q) @>\partial _j>> K_0(I_j)
 @>0>> K_0(Q(T_j)),
\end{CD}
\end{equation}
By (\ref{eq:6.17}) and the definition of the connecting map, we
get $\partial _j([e(p_i)t_j])=-[e(p_i,q_j)]$. By using
(\ref{eq:6.16}) we see that, for $\ell \ne j$,  we have $(\pi
_{\ell})_*([e(p_i)t_j])=[e(p_i)t_j]$ if $j<\ell$ and $(\pi
_{\ell})_*([e(p_i)t_{j-1}])=[e(p_i)t_j]$ if $j\ge \ell+1$. Thus
for $j=1,\dots ,k$ we have
$$K_1(Q)=(\pi _j)_*(K_1(Q(T_j)))+\big(\bigcap _{\ell \ne j}(\pi _{\ell})_*(K_1(Q(T_{\ell})))\big).$$
It follows from Theorem \ref{pullbacktheorem} that the natural map
$M(T)\to \mon{Q(T)}$ is an isomorphism.

This concludes the proof of Theorem \ref{firstcase}.
\end{proof}

We turn now to the general case. In this case, the strategy is to
analyze how $M(\mathbb P)$ is obtained from the different monoids
$M(\mathbb F(p))$, where $p$ ranges over the maximal elements of
$\mathbb P$.

For a maximal element $p$ of a poset $\mathbb P$, the poset
$\mathbb P\dnw p$ has $p$ as a greatest element, so we can use the
construction in Proposition \ref{posets} to obtain a poset
$\mathbb F (p)$ and a surjective order-preserving map $\Psi\colon
\mathbb F (p)\to \mathbb P\dnw p$. The map $\Psi $ preserves
chains, that is if $S$ is a chain in $\mathbb F(p)$ then $\Psi $
restricts to a bijection from $S$ to $\Psi (S)$. Moreover it is
easy to see that the map $S\to \Psi (S)$ is a bijection from the
set of maximal chains of $\mathbb F(p)$ onto the set $\mathcal
S^0(p)$ of maximal chains of $\mathbb P\dnw p$. We also recall two
fundamental properties of $\mathbb F(p)$:

(1) For every $t\in \mathbb F(p)$ the interval $[t,p]$ is a chain
(Proposition 5.1(i)).

(2) For $t_1,t_2\in \mathbb F(p)$, if $\Psi ([t_1,p])=\Psi
([t_2,p])$ then $t_1=t_2$. (This follows directly from the
construction of $\mathbb F(p)$.)

\begin{lem}\label{familylemma}
For every $q\in \mathbb F(p)$, the map $\Psi$ induces a bijection
from $\rL(\mathbb F(p), q)$ onto $\rL(\mathbb P\dnw p,\Psi (q))$.
\end{lem}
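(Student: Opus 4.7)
My plan is to exploit the three listed properties of $\Psi$: (P1) for every $t \in \mathbb{F}(p)$, the interval $[t,p]$ is a chain in $\mathbb{F}(p)$; (P2) $\Psi([t_1,p]) = \Psi([t_2,p])$ implies $t_1 = t_2$; and (P3) $S \mapsto \Psi(S)$ is a bijection between maximal chains of $\mathbb{F}(p)$ and those of $\mathbb{P}\dnw p$, with $\Psi$ restricting to a poset isomorphism on each such chain. Write $[q,p] = \{q = q_0 < q_1 < \cdots < q_m = p\}$, which is a chain by (P1). The opening observation is that if $q' \in \rL(\mathbb{F}(p),q)$ then, since $[q',p]$ is a chain containing $[q,p]$ and nothing lies strictly between $q'$ and $q$, one has the decomposition $[q',p] = \{q'\} \sqcup [q,p]$, and so $\Psi([q',p]) = \{\Psi(q')\} \cup \Psi([q,p])$.

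The injectivity of $\rL(\mathbb{F}(p),q) \to \rL(\mathbb{P}\dnw p,\Psi(q))$ is now immediate: if $\Psi(q_1') = \Psi(q_2')$ with both $q_i' \in \rL(\mathbb{F}(p),q)$, the preceding identity forces $\Psi([q_1',p]) = \Psi([q_2',p])$, and (P2) gives $q_1' = q_2'$. To check that $\Psi(q')$ is genuinely a lower cover of $\Psi(q)$, I would extend the chain $q' < q < q_1 < \cdots < q_m = p$ to a maximal chain $C'$ of $\mathbb{F}(p)$; by (P3) the image $\Psi(C')$ is a maximal chain of $\mathbb{P}\dnw p$ and $\Psi$ is a poset isomorphism from $C'$ onto $\Psi(C')$, so $\Psi(q')$ is the immediate predecessor of $\Psi(q)$ inside $\Psi(C')$. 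Adjacency in a maximal chain forces a lower-cover relation (any refinement would contradict maximality), hence $\Psi(q') \in \rL(\mathbb{P}\dnw p,\Psi(q))$.

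For surjectivity, observe first that applying the previous step to each adjacent pair in the chain $[q,p]$ gives that $\Psi(q_{j-1})$ is a lower cover of $\Psi(q_j)$ in $\mathbb{P}\dnw p$ for $j = 1,\ldots,m$. Hence, given $r \in \rL(\mathbb{P}\dnw p,\Psi(q))$, I can construct a maximal chain $C = c_0 < c_1 < \cdots < c_N = p$ of $\mathbb{P}\dnw p$ whose top $m+2$ elements are $r < \Psi(q) < \Psi(q_1) < \cdots < p$. Lifting $C$ via (P3) produces a maximal chain $C' = t_0 < t_1 < \cdots < t_N = p$ of $\mathbb{F}(p)$, and I claim $t_{N-m+j} = q_j$ for $j = 0,\ldots,m$: by maximality of $C'$ no element of $\mathbb{F}(p)$ can be inserted into $C'$, so $[t_{N-m},p]$ cannot properly contain $\{t_{N-m},\ldots,t_N\}$, which gives $\Psi([t_{N-m},p]) = \{\Psi(q), \Psi(q_1), \ldots, p\} = \Psi([q,p])$ and forces $t_{N-m} = q$ by (P2); the same argument at each index pins down the upper tail of $C'$. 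Then $q' := t_{N-m-1}$ satisfies $\Psi(q') = r$, and maximality of $C'$ again forces $q'$ to be a lower cover of $q$ in $\mathbb{F}(p)$.

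The main obstacle is this last identification: one must argue that the top of the lifted maximal chain $C'$ is \emph{equal} to $[q,p]$, not merely that it maps to $\Psi([q,p])$. The conjunction of maximality of $C'$ (forbidding refinements) with property (P2) is exactly what closes this gap; once that is done, the lower cover $q'$ drops out as $t_{N-m-1}$.
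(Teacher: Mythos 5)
Your proof is correct, but it takes a genuinely different route from the paper's. The paper's argument is a one-line combinatorial identification: an element of $\Psi^{-1}(q')$ corresponds to a path in the quiver $\mathcal T(\mathbb P\dnw p)$ from $p$ to $q'$ (this is how $\mathbb F(p)$ is assembled from the maximal chains in Proposition \ref{posets}), and a lower cover of $q$ in $\mathbb F(p)$ is precisely an enlargement of the path corresponding to $q$ by one arrow from $\Psi(q)$ to an element of $\rL(\mathbb P\dnw p,\Psi(q))$; the bijection is then immediate. You instead derive the statement axiomatically from the three properties of $\Psi$ recorded just before the lemma: the decomposition $[q',p]=\{q'\}\sqcup[q,p]$ for $q'\in\rL(\mathbb F(p),q)$ combined with property (2) gives injectivity, and surjectivity comes from lifting a carefully chosen maximal chain of $\mathbb P\dnw p$ and using maximality plus property (2) to pin its upper tail down to $[q,p]$. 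Your route is longer but more self-contained: it never re-opens the construction of $\mathbb F(p)$, only its stated consequences, and it makes explicit two points the paper leaves implicit, namely that adjacency in a maximal chain is a cover relation and that the image of a cover in $\mathbb F(p)$ is again a cover in $\mathbb P\dnw p$ (needed to build the chain you lift). The paper's proof buys brevity at the cost of asking the reader to recall that $\mathbb F(p)$ is exactly the poset of paths in $\mathcal T(\mathbb P\dnw p)$ issuing from $p$.
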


\begin{proof}
Write $\mathcal T:=\mathcal T (\mathbb P\dnw p)$, where $\mathcal T
(\mathbb P\dnw p)$ is the quiver associated to the poset $\mathbb
P\dnw p$, see Definition \ref{defi: Q(M)}. For $q'\in \mathbb P\dnw
p$ there is a bijection betweeen $\Psi ^{-1}(q')$ and the set of
paths in $\mathcal T$ from $p$ to $q'$. If $\Psi (q)=q'$ then an
element in $\rL(\mathbb F(p),q)$ corresponds to an enlargement of
the path in $\mathcal T$ from $p$ to $q'$ corresponding to $q$, by
an arrow from $q'$ to an element in $\rL(\mathbb P\dnw p,q)$. This
gives the result.
\end{proof}

Since $\Psi (q')<\Psi (q)$ whenever $q'<q$ in $\mathbb F(p)$, the
map $\Psi\colon \mathbb F(p)\to \mathbb P \dnw p$ can be extended to
a surjective monoid homomorphism, denoted in the same way,
$$\Psi
\colon M(\mathbb F(p))\longrightarrow M(\mathbb P\dnw p)=M(\mathbb
P)\mid p \, .
$$

The {\it depth} of an element $q$ in a poset $\mathbb P$ is the
maximum length of a chain of the form $q=q_0<q_1<\cdots <q_t$. Let
us denote the poset of elements of $\mathbb P$ of depth $\le s$ by
$\mathbb P_s$.

\begin{prop}\label{primitivepushout}
Let $M$ be a finitely generated primitive monoid such that all
primes of $M$ are free. For a maximal element $p$ of $\mathbb P
(M)$, consider the monoid homomorphism $\Psi \colon M(\mathbb
F(p))\to M\mid p$ defined above. Assume that $p$ has height $r$. For
$0\le i\le r$, there are primitive monoids $M^i(p)$ such that, for
$0\le i< j\le r$, there are surjective monoid homomorphisms $\Psi
_{ji}\colon M^i(p)\to M^j(p)$ with $\Psi _{ji}=\Psi _{jk}\Psi _{ki}$
when $i< k< j$, such that the following properties are satisfied:
\begin{enumerate}[{\rm (i)}]
\item $M^0(p)=M(\mathbb F (p))$, $M^r(p)=M\mid p\, $ and $\, \Psi
_{r0}=\Psi$.
\item $\Psi _{i0}$ induces an isomorphism of posets from $(\mathbb P(M^0(p)))_{r-i}$ onto
$(\mathbb P (M^i(p)))_{r-i}$.
\item $\Psi _{ri}$ induces an isomorphism between $M^i(p)\mid q$ and
$M\mid \Psi _{ri}(q)$ for $q\in \mathbb P(M^i(p))$ of depth $\ge
r-i-1$.
\item $(M^i(p)\mid q_1)\cap (M^i(p)\mid q_2)=0$ for
two incomparable elements $q_1 $ and $q_2$ in $ \mathbb P(M^i(p))$
of depth $\le r-i-1$.
\end{enumerate}

\smallskip

\noindent The monoid $M^{i+1}(p)$ (respectively, $M$) is obtained
from the monoid $M^{i}(p)$ (respectively, {\rm $\prod _{p\in
{\text{Max}(\mathbb P)}}M\mid p$)} by a finite sequence of crowned
pushout diagrams of the form considered in Lemma
\ref{structVcrownpushout}.
\end{prop}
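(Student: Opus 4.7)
My plan is to define each $M^i(p)$ explicitly as the primitive monoid of a suitable auxiliary poset $\mathbb P^i(p)$, verify properties (i)--(iv) from this poset description, and then realize each transition $M^i(p)\to M^{i+1}(p)$, as well as the global assembly of $M$, as a finite sequence of crowned pushouts of the form in Lemma~\ref{structVcrownpushout}.

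For $0\le i\le r-1$, take $\mathbb P^i(p)$ to consist of the ``top part'' $\mathbb F^{\le r-i-1}(p)$, i.e.\ the primes of $\mathbb F(p)$ of depth at most $r-i-1$ with their order from $\mathbb F(p)$, together with, for each $q\in\mathbb F(p)$ of depth exactly $r-i-1$, a disjoint ``folded copy'' $F_q$ whose elements are labeled $y_q$ for $y\in\mathbb P\dnw\Psi(q)\setminus\{\Psi(q)\}$, ordered so that $y_q\le z_q$ iff $y\le z$ in $\mathbb P\dnw p$, with each $y_q<q$, and with no comparisons across distinct $F_q$'s or to top-part elements outside the branch of $q$. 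Set $M^i(p):=M(\mathbb P^i(p))$ and $M^r(p):=M\mid p$. A direct check, using Lemma~\ref{familylemma} to identify the folded copies at $i=0$ with the bottom layer of $\mathbb F(p)$, yields $\mathbb P^0(p)=\mathbb F(p)$ and $\mathbb P^{r-1}(p)\cong\mathbb P\dnw p$, whence $M^0(p)=M(\mathbb F(p))$ and $M^{r-1}(p)=M\mid p=M^r(p)$, giving (i). The maps $\Psi_{ji}\colon M^i(p)\to M^j(p)$ are the obvious label-respecting ones, and $\Psi_{r0}=\Psi$.

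From this description, (ii) holds because the depth-$(r-i)$ primes of $\mathbb P^i(p)$ are precisely the tops of the folded copies, which biject via Lemma~\ref{familylemma} with the depth-$(r-i)$ primes of $\mathbb F(p)$, so the depth-$\le r-i$ stratum of $\mathbb P^i(p)$ matches $\mathbb F^{\le r-i}(p)$ under $\Psi_{i0}$. For (iii), below a prime $q$ of depth $\ge r-i-1$ in $\mathbb P^i(p)$ (either a top prime of depth $r-i-1$ or an element of a folded copy), the poset is by construction a copy of $\mathbb P\dnw\Psi_{ri}(q)\setminus\{\Psi_{ri}(q)\}$, so $M^i(p)\mid q\cong M\mid\Psi_{ri}(q)$. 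For (iv), two incomparable primes in the top part of $\mathbb P^i(p)$ have no common descendant in $\mathbb F(p)$ (since $[t,p]$ is a chain for every $t\in\mathbb F(p)$), and their folded copies are disjoint across different branches.

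To realize the transition $M^i(p)\to M^{i+1}(p)$, observe that $\mathbb P^{i+1}(p)$ differs from $\mathbb P^i(p)$ only in that, for each $q'\in\mathbb F(p)$ of depth $r-i-2$, the folded copies $F_{q_1},\ldots,F_{q_k}$ attached below the lower covers $q_1,\ldots,q_k$ of $q'$ in $\mathbb F(p)$ are merged into a single folded copy $F_{q'}$ below $q'$. I would realize this merger by iterating crowned pushouts: for each pair $(q_a,q_b)$ of distinct lower covers of $q'$ and each maximal element $y\in\mathbb P\dnw\Psi(q_a)\cap\mathbb P\dnw\Psi(q_b)\subseteq\mathbb P\dnw p$, apply Lemma~\ref{structVcrownpushout} with $I=M^i(p)\mid y_{q_a}$, $I'=M^i(p)\mid y_{q_b}$, and $\varphi$ the tautological isomorphism induced by the identifications $M\mid y\cong M^i(p)\mid y_{q_a}\cong M^i(p)\mid y_{q_b}$. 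These ideals are disjoint by (iv) at level $i$ applied to the incomparable depth-$(r-i-1)$ primes $q_a,q_b$, and different maximal $y$'s sit in incomparable sub-branches, so the pushouts for a given pair are independent. The global passage from $\prod_{p\in\mathrm{Max}(\mathbb P)}M\mid p$ to $M$ is handled identically, with crowned pushouts indexed by pairs of distinct maximal elements $p_1,p_2$ and by maximal elements of $(\mathbb P\dnw p_1)\cap(\mathbb P\dnw p_2)$. The main obstacle is the combinatorial bookkeeping needed to confirm that after each intermediate pushout the remaining ideals to be identified are still pairwise disjoint and carry the prescribed isomorphism; this is handled by inductively recomputing the poset of primes of the intermediate monoids using Lemma~\ref{structVcrownpushout}.
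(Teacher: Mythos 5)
Your explicit description of $M^i(p)$ as $M(\mathbb P^i(p))$, with a ``top part'' of $\mathbb F(p)$ and one folded copy of $\mathbb P\dnw\Psi(q)\setminus\{\Psi(q)\}$ hanging below each depth-$(r-i-1)$ prime $q$, is a legitimate alternative to the paper's recursive construction, and your verifications of (i)--(iv) from that description are essentially sound. The genuine gap is in the realization of the transition $M^i(p)\to M^{i+1}(p)$ by crowned pushouts. You perform one pushout per triple $(q_a,q_b,y)$ with $y$ a \emph{maximal} element of $\mathbb P\dnw\Psi(q_a)\cap\mathbb P\dnw\Psi(q_b)$, and you assert that ``different maximal $y$'s sit in incomparable sub-branches, so the pushouts for a given pair are independent.'' This is false in general: two distinct maximal elements $y,y'$ of that intersection can have a common lower bound $w$ (which then also lies in the intersection), precisely because the posets here are not forests. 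After the first pushout identifies $M^i(p)\mid y_{q_a}$ with $M^i(p)\mid y_{q_b}$ (and hence $w_{q_a}$ with $w_{q_b}$), the ideals $I=M'\mid y'_{q_a}$ and $I'=M'\mid y'_{q_b}$ both contain the merged copy of $w$, so $I\cap I'\neq 0$ and Lemma \ref{structVcrownpushout} no longer applies. A concrete instance: $\mathbb P=\{p,a,b,y,y',w\}$ with $a,b<p$, $y,y'<a$, $y,y'<b$, $w<y$, $w<y'$; at the step merging the branches below $a$ and $b$, the intersection $\mathbb P\dnw a\cap\mathbb P\dnw b=\{y,y',w\}$ has the two maximal elements $y,y'$ with common lower bound $w$, and your second pushout is not of the required form.

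The repair is to take the \emph{whole} intersection at once: for the pair $(q_a,q_b)$, set $Z_a=\bigl((\Psi_{ri})_{|M^i(p)\mid q_a}\bigr)^{-1}\bigl(\Psi_{ri}(M^i(p)\mid q_a)\cap\Psi_{ri}(M^i(p)\mid q_b)\bigr)$ and similarly $Z_b$, and perform a single crowned pushout of $(M^i(p),Z_a,Z_b,\varphi)$. Here $Z_a\subseteq M^i(p)\mid q_a$ and $Z_b\subseteq M^i(p)\mid q_b$, which are disjoint by (iv), so $Z_a\cap Z_b=0$ holds regardless of the internal structure of the intersection; this is exactly what the paper does. (One still has to order the pairs so that disjointness persists for later pairs --- e.g.\ merging $q_{j+1}$ into the already-merged block $\{q_1,\dots,q_j\}$ --- but that is a much milder bookkeeping issue than the one your per-$y$ decomposition creates, and it is resolved by the fact that nothing is ever moved into or out of $M\mid q_{j+1}$ before its turn.) The same correction is needed for your global assembly of $M$ from $\prod_{p\in\text{Max}(\mathbb P)}M\mid p$.
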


\begin{proof} We proceed by induction on $i$.

Observe that order-ideals in $M^0(p):=M(\mathbb F (p))$ are in
bijection with lower subsets of $\mathbb F (p)$, see Proposition
\ref{wellknownlattice}. If $q_1$ and $q_2$ are incomparable elements
of $\mathbb F (p)$ then $(\mathbb F(p)\dnw q_1)\cap (\mathbb
F(p)\dnw q_2)=\emptyset$, because $[t,p]$ is a chain for every $t\in
\mathbb F (p)$. Hence property (iv) holds for $M^0(p)$. Property
(ii) holds vacuously, and property (iii) follows from Lemma
\ref{familylemma}.

Assume that $M^i(p)$ and $\Psi _{i0}$, $\Psi _{ri}$ have been
constructed, where $0\le i<r-1$, so that properties (i)--(iv) hold.
We need to build $M^{i+1}(p)$ so that the map $\Psi _{ri}\colon
M^i(p)\to M\mid p$ factors through $M^{i+1}(p)$. Write $\mathbb
P^i:=\mathbb P (M^i(p))$. Let $q$ be an element in $\mathbb P ^i$ of
depth $r-i-2$. Consider the set $\rL(\mathbb P ^i,q)=\{q_1,\dots
,q_k\}$ of lower covers of $q$ in $\mathbb P^i$. Observe that the
elements $q_u$ have depth exactly $r-i-1$. Indeed, $q_u$ has depth
$\le r-i-1$, and thus by (ii) $[q_u,p]\cong [\Psi
_{i0}^{-1}(q_u),p]$, so that $[q_u,p]$ is a chain. It follows that
the depth of $q_u$ is exactly $r-i-1$.

Since $q_1, \dots ,q_k$ are mutually incomparable, it follows from
(iv) that $(M^i(p)\mid q_u)\cap (M^i(p)\mid q_v)=0$ for $u\ne v$.
Since the depth of $q_j$ is $r-i-1$, it follows from (iii) that
$${\Psi _{ri}}_{|(M^i(p)\mid q_j)}\colon
M^i(p)\mid q_j \longrightarrow M\mid \Psi _{ri}(q_j)$$ is an
isomorphism, for $j=1,\dots ,k$.

Consider the order-ideals of $M^i(p)$ given by:
$$Z_u:=((\Psi _{ri})_{|(M^i(p)\mid q_u)})^{-1}\big(\Psi _{ri}(M^i(p)\mid q_1)\bigcap
\Psi_{ri}(M^i(p)\mid q_2)\big)$$ for $u=1,2$. The map
$$\varphi=\big( ((\Psi _{ri})_{|(M^i(p)\mid q_2)})^{-1}\circ
(\Psi _{ri})_{|(M^i(p)\mid q_1)}\big)_{|Z_1}\colon
Z_1\longrightarrow Z_2$$ is a monoid isomorphism, and $Z_u\subseteq
(M^i(p)\mid q_u)$ for $u=1,2$. Observe that, by Lemma
\ref{familylemma},  condition (ii), and the fact that $\Psi =\Psi
_{ri}\Psi_{i0}$, it follows that $\Psi _{ri}$ induces a bijection
from $\rL(\mathbb P^i,q)$ onto $\rL(\mathbb P\dnw p,\Psi _{ri}(q))$.
This implies that $Z_u$ is strictly contained in $M^i(p)\mid q_u$
for $u=1,2$.

By (iv) we have $Z_1\cap Z_2=0$. So we can consider the crowned
pushout $M'$ of $(M^i(p),Z_1,Z_2,\varphi)$, see Section
\ref{sect:pushout}. By Lemma \ref{structVcrownpushout}, $M'$ is a
primitive monoid with all primes free, and $\mathbb P (M')=\mathbb
P^i\setminus \mathbb P (Z_2)$ with the order structure given by the
restriction of the order in $\mathbb P ^i$ and the additional
relations, for $p\in \mathbb P (Z_1)$ and $q\in \mathbb P
^i\setminus (\mathbb P(Z_1)\cup \mathbb P (Z_2))$, given by  $p<q$
if $\varphi (p)<q$ in $\mathbb P ^i$.  Moreover, the map $\Psi
_{ri}\colon M^i(p)\to M$ factors as
\begin{equation}
\label{equ:xxx}
\begin{CD}
M^i(p) @>{\mu '}>>  M' @>{\lambda '}>> M ,
\end{CD}
\end{equation} where the map $\mu '\colon M^i(p)\to M'$
can be identified by Lemma \ref{structVcrownpushout} with the
natural identification map sending $p$ and $\varphi (p)$ to $p$ for
$p\in \mathbb P (Z_1)$, and sending $p$ to $p$ for $p\in \mathbb P
^i\setminus (\mathbb P (Z_1)\cup \mathbb P (Z_2))$. Moreover, the
map $\lambda '$ induces an isomorphism from $M'\mid \{q_1,q_2\}$
onto $M\mid \{\Psi _{ri}(q_1),\Psi _{ri}(q_2)\}$. Proceeding in this
way, we obtain a new primitive monoid $M''$, such that all primes of
$M''$ are free, with a factorization of the map $\Psi _{ri}\colon
M^i(p)\to M$  of the form
\begin{equation}
\label{equ:xxy}
\begin{CD}
M^i(p) @>{\mu ''}>>  M'' @>{\lambda ''}>> M ,
\end{CD}
\end{equation}
such that $\lambda'' $ induces an isomorphism from $M''\mid
\{q_1,\dots ,q_k\}$ onto $M\mid \{\Psi _{ri}(q_1),\dots
,\Psi_{ri}(q_{k})\}$, and such that $\mu ''$ is the identity on all
the primes of depth $\le r-i-1$. Clearly the map $\lambda ''$
induces an order-isomorphism from $\mathbb P (M'')\dnw  q$ onto
$\mathbb P\dnw \Psi _{ri}(q)$, and thus a monoid isomorphism from
$M''(p)\mid q$ onto $M\mid \Psi _{ri}(q)$.

Proceeding in this way with all the primes $q$ in $M^i(p)$ of
depth $r-i-2$, we get a factorization of the map $\Psi _{ri}$ as
\begin{equation}
\label{equ:xyx}
\begin{CD}
M^i(p) @>{\Psi_{i+1,i}}>>  M^{i+1}(p) @>{\Psi_{r,i+1}}>> M\mid p ,
\end{CD}
\end{equation}
with the desired properties. Note that for $i=r-1$, we get an
isomorphism \linebreak $\Psi _{r,r-1}\colon M^{r-1}(p)\to M\mid
p$.

Finally observe that a similar process can be used to get $M$ from
$\prod _{p\in {\text{Max}(\mathbb P)}}M\mid p$ by a finite
sequence of crowned pushouts.
\end{proof}

Let $\mathbb P$ be a poset, and let $A$ be a lower subset of
$\mathbb P$. Put
$$\partial A=\{p\in\mathbb P : \rL(\mathbb P,p)\cap A\ne \emptyset \} \cup A.$$

Observe that, as explained at the beginning of this section (Step
2), the next theorem, together with Theorem \ref{firstcase} and
Proposition \ref{primitivepushout}, completes the proof of Theorem
\ref{main}, because the basic steps used to build $M(\mathbb P)$
from the monoids $M(\mathbb F (p))$, for $p\in \text{Max}(\mathbb
P)$, are of the form described in the theorem (and moreover Theorem
\ref{main} holds for the monoids $M(\mathbb F (p))$ by Theorem
\ref{firstcase}).

\begin{theor}
\label{regpushout} Let $M$ be a finitely generated primitive monoid
with all primes free. Assume that $\mathbb P:=\mathbb P (M)$
contains two lower subsets $A,A'$ such that $\partial A \cap
\partial A'=\emptyset $, and assume moreover that there is an
order-isomorphism $\varphi \colon A\to A'$. Let $\varphi \colon
M(A)\to M(A')$ denote the induced monoid isomorphism, and let $M'$
be the crowned pushout of $(M,M(A),M(A'),\varphi)$.

Suppose that $Q_K(M)$ is von Neumann regular and that the canonical
map $\psi _M\colon M\to \mon{Q_K(M)}$ is an isomorphism. Then
$Q_K(M')$ is also von Neumann regular and the map $\psi _{M'}\colon
M'\to \mon{Q_K(M')}$ is an isomorphism.
\end{theor}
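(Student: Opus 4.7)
The strategy is to realize $Q_K(M')$ as the von Neumann regular ring $U$ obtained from $Q_K(M)$ via the ring-level crowned pushout of Proposition \ref{crownpushoutprop}, applied to the ideals $I(A), I(A')$ of $Q_K(M)$ produced by Proposition \ref{prop:ideallattice}. First I would verify the two hypotheses of Proposition \ref{crownpushoutprop}. The condition $I(A) \cap I(A') = 0$ follows at once from Proposition \ref{prop:ideallattice}(2) together with the observation that $A \cap A' \subseteq \partial A \cap \partial A' = \emptyset$. For the Morita equivalence, I would use Theorem \ref{theor:repres} to identify $e(A)Q_K(M)e(A) \cong Q_K(A)$ and $e(A')Q_K(M)e(A') \cong Q_K(A')$, and then invoke Proposition \ref{uniprop} applied to the complete isomorphism $\varphi \colon A \to A'$ (with compatible labellings chosen, which is permissible by Remark \ref{rem:precission}(1)) to produce a $K$-algebra isomorphism $Q_K(A) \cong Q_K(A')$; since each of $I(A)$ and $I(A')$ is a ring with local units Morita equivalent to its idempotent corner, this yields $I(A) \sim_{\mathrm{Morita}} I(A')$.

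With the hypotheses verified, Proposition \ref{crownpushoutprop} produces a strongly separative von Neumann regular ring $U$ and an injective $K$-algebra map $\alpha \colon Q_K(M) \to U$ such that $\mon{\alpha}$ is the coequalizer of diagram \ref{diag:NCD.1}. Because $\psi_M$ is an isomorphism by hypothesis, Proposition \ref{crownpushmon} identifies this coequalizer with the crowned pushout $M'$, giving $\mon{U} \cong M'$.

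The main obstacle is to exhibit a $K$-algebra isomorphism $U \cong Q_K(M')$, which will then give the theorem at once. My plan is to use the generators-and-relations presentation of $Q_K(M')$ from Definition \ref{defi: Q(M)} to build a homomorphism $\beta \colon Q_K(M') \to U$. The idempotents and arrow-generators attached to primes and covering relations already present in $\mathbb P(M) \setminus \mathbb P(A')$ are sent via $\alpha$, where for $p \in A$ the idempotent $e(p)$ of $Q_K(M')$ is mapped to the glued idempotent $\alpha(e(p)) + \alpha(e(\varphi(p)))$ of $U$, which has the correct class in $\mon{U}$; the genuinely new covering arrows of $\mathbb P(M')$ arising from Lemma \ref{structVcrownpushout} are realized in $U$ using the bimodule components of the Morita data implementing the gluing. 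After checking that all defining relations of $Q_K(M')$ are respected by $\beta$, surjectivity follows from a direct inspection of the generators of $U$, while injectivity can be established by combining Lemma \ref{lem:injectivity} with the faithful Toeplitz-type representation of $Q_K(M')$ provided by Theorem \ref{theor:repres}.
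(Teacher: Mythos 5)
Your setup is the paper's: you verify $I(A)\cap I(A')=0$ and the Morita equivalence of $I(A)$ and $I(A')$ exactly as the paper does (via Theorem \ref{theor:repres} and the isomorphism $Q_K(A)\cong Q_K(A')$), you apply Proposition \ref{crownpushoutprop} to get a regular ring $U$ with $\mon{U}\cong M'$, and you then try to identify $Q_K(M')$ with $U$ by a generators-and-relations map. The gap is in that last identification: the claimed isomorphism $U\cong Q_K(M')$ is false in general, and the assignment $e(p)\mapsto\alpha(e(p))+\alpha(e(\varphi(p)))$ for $p\in A$ is precisely where it breaks. In $\mon{U}\cong M'$ the classes of $\alpha(e(p))$ and $\alpha(e(\varphi(p)))$ are \emph{both} equal to the prime $p$ (that is the whole point of the gluing), so their orthogonal sum represents $2p$, not $p$; since $p$ is a free prime, $2p\neq p$, and the idempotent you propose does not have the correct class. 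A minimal counterexample: let $\mathbb P=\{a,a'\}$ be a two-element antichain with $A=\{a\}$, $A'=\{a'\}$. Then $Q_K(M)\cong L\times L$, $U\cong M_2(L)$, while $Q_K(M')\cong L$; your $\beta$ is the scalar embedding $L\to M_2(L)$, which is not surjective, and indeed no isomorphism $Q_K(M')\cong U$ exists. The discrepancy is already visible on units: $1_{Q_K(M')}$ corresponds to $\sum_{p\in\mathbb P\setminus A'}p$ in $M'$, whereas $[1_U]$ corresponds to the image of $\sum_{p\in\mathbb P}p$, which carries an extra copy of $\sum_{q\in A}q$.

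The correct statement, and what the paper proves, is that $Q_K(M')$ is isomorphic to the corner $eUe$ for the \emph{full} idempotent $e=1_U-\diag(0,e(A'))$; fullness gives $\mon{eUe}\cong\mon{U}\cong M'$, and the theorem follows. Concretely, for $p\in\mathbb P\setminus A'$ (including $p\in A$) one sends $e(p)$ to the single idempotent $\omega(e(p))$ — for $p\in A$ this is $\diag(e(p)+I(A'),0)$, of class $p$ — and the generators attached to the new covering relations $q<p$ of $\mathbb P(M')$ (with $q\in A$ and $\varphi(q)\in\rL(\mathbb P,p)$) are realized by off-diagonal bimodule elements such as $\bigl(\begin{smallmatrix}0&0\\ \beta_{p,\varphi(q)}\otimes e(q)&0\end{smallmatrix}\bigr)$. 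That part of your plan is in the right spirit, but it must be carried out inside $eUe$, and the surjectivity/injectivity check then requires matching the decomposition of $I_{Q_K(M')}(A)$ from Lemma \ref{normalform} against the four corners of $eJe$ (together with Lemma \ref{lem:injectivity}), rather than an appeal to the representation of Theorem \ref{theor:repres}.
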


\begin{proof}
Obviously we can assume that the order-isomorphism $\varphi \colon
A\to A'$ respects the labelling of the arrows of the quivers
$\mathcal T (A)$ and $\mathcal T (A')$ associated with $A$ and $A'$
respectively, see Definition \ref{defi: Q(M)}. Consequently we get a
$K$-algebra isomorphism,  denoted in the same way, $\varphi\colon
Q_K(A)\to Q_K(A')$.

We have a natural isomorphism, still denoted by $\varphi$, from
$e(A)Q_K(M)e(A)$ onto $e(A')Q_K(M)e(A')$ given by the composition
of the isomorphisms
$$e(A)Q_K(M)e(A)\longrightarrow Q_K(A)\longrightarrow
Q_K(A')\longrightarrow e(A')Q_K(M)e(A'),$$ where the first and third
maps are isomorphisms by Theorem \ref{theor:repres}. It follows that
$I(A)$ and $I(A')$ are Morita-equivalent (where $I(A)$ (resp.
$I(A')$) denotes the ideal of $Q_K(M)$ generated by $e(A)$ (resp.
$e(A')$)). Write $Q:=Q_K(M)$. Take $Q_1=Q/I(A')$ and $Q_2=Q/I(A)$,
and consider the $Q_1$-$Q_2$-bimodule $N=Qe(A)\otimes_{e(A)Qe(A)}
e(A')Q$ and the $Q_2$-$Q_1$-bimodule $M=Qe(A')\otimes
_{e(A')Qe(A')}e(A)Q$, where $e(A)Qe(A)$ acts on $e(A')Q$ by $x\cdot
y=\varphi (x)y$ for $x\in e(A)Qe(A)$ and $y\in e(A')Q$, and
similarly for the action of $e(A')Qe(A')$ on $e(A)Q$. Let $U$ be the
ring described in the proof of Proposition \ref{crownpushoutprop},
so that $U$ is the ring of all matrices $X=\begin{pmatrix}q+I(A') &
n
\\ m & q+I(A)
\end{pmatrix}$ in $\begin{pmatrix}Q_1 & N \\ M & Q_2\end{pmatrix}$.
By Proposition \ref{crownpushoutprop} there is an injective unital
$K$-algebra morphism $\omega \colon Q\to U$ given by $\omega
(q)=\text{diag}(q+I(A'),q+I(A))$. Moreover $U$ is von Neumann
regular, and $\mon{U}\cong M'$ in a natural way, and we have an
isomorphism $Q/(I(A)+I(A'))\cong U/J$, where $$J=\begin{pmatrix}
I(A) & N \\ M & I(A')
\end{pmatrix}.$$

Set $e=1_U-\text{diag}(0,e(A'))\in U$, and observe that $e$ is a
full idempotent in $U$, that is, $UeU=U$. It follows that
$\mon{eUe}\cong M'$.
 We are going to show that
there is an isomorphism $\delta \colon Q_K(M')\to eUe$ such that
the induced map $\mon{\delta ^{-1}}\colon M'\cong \mon{eUe}\to
\mon{Q_K(M')}$ is the canonical map $M'\to \mon{Q_K(M')}$.

We define the map $\delta$ on the canonical generators of
$Q_K(M')$, given in Definition \ref{defi: Q(M)}. The reader can
easily show that the defining relations are satisfied in $eUe$ and
that the map so defined  $\mathcal A \to eUe$ is $(\Sigma\cup
\Sigma _1)$-inverting.

By Lemma \ref{structVcrownpushout},  $\mathbb P ':=\mathbb P (M')=
\mathbb P \setminus A'$, with the order relation induced by the
order relation in $\mathbb P$ and the additional relations $q<p$
whenever $q\in A$, $p\in \mathbb P'\setminus A$, and $\varphi (q)<p$
in $\mathbb P$. Define $\delta (e(p))=\omega (e(p))\in U$ for $p\in
\mathbb P'$. Observe that $\{\omega (e(p)): p\in \mathbb P'\}$ is an
orthogonal family of idempotents in $eUe$ with sum $e$. Now for
$q,p\in \mathbb P '$ such that $q\in \rL(\mathbb P,p)$, define
$\delta (e(p,q))=\omega (e(p,q))$ and $\delta (\alpha_{p,q})=\omega
(\alpha _{p,q})$ and $\delta (\beta_{p,q})=\omega (\beta _{p,q})$.
If $q\in A$, $p\in \mathbb P'\setminus A$ and $\varphi (q)\in
\rL(\mathbb P,p)$, then define
$\delta (e(p,q))=\omega (e(p,\varphi (q)))=\begin{pmatrix} 0 & 0 \\
0 & e(p,\varphi (q))\end{pmatrix}$, and $\delta (\alpha
_{p,q})=\omega (\alpha _{p,\varphi (q)})$, and
$$\delta (\beta_{p,q})=\begin{pmatrix}0 & 0 \\ \beta _{p,\varphi (q)}\otimes e(q) &
0\end{pmatrix}.$$
 Note that this is
well-defined by our hypothesis that $\partial A\cap
\partial A'=\emptyset$.

Now it is straightforward to show that $\delta $ is an
isomorphism. Indeed the induced map $ Q_K(M')/I_{Q_K(M')}(A)\to
eUe/eJe=Q/(I(A)+I(A'))$ is an isomorphism by Remark
\ref{remark:quotients}, using our assumption that $\partial A\cap
\partial A'=\emptyset$. The restriction map $\ol{\delta}\colon I_{Q_K(M')}(A)\to eJe$ is also an
isomorphism. For, note that elements in $Q_K(M')_{(\gamma _1,\gamma
_2)}$ with $r(\gamma _1)=r(\gamma _2)\in A$ can be classified in
four classes, depending on whether each $\gamma _i$ contains   an
arrow $(p,q)$ with $q\in A$, $p\in \mathbb P '\setminus A$ and
$\varphi (q)\in \rL(\mathbb P,p)$, or does not contain it. Each of
the four classes
corresponds to a corner in $$eJe=\begin{pmatrix} I(A) & Qe(A)\otimes _{e(A)Qe(A)}e(A')Q(1-e(A')) \\
(1-e(A'))Qe(A')\otimes _{e(A')Qe(A')}e(A)Q &
(1-e(A'))I(A')(1-e(A'))
\end{pmatrix}.$$
Thus, one obtains from Proposition \ref{prop:ideallattice} that
$\ol{\delta}$ is surjective. In order to show injectivity of
$\ol{\delta}$, take a nonzero element $x$ in $I_{Q_K(M')}(A)$. By
Lemma \ref{lem:injectivity},  there exist $p\in A$ and $z_1,z_2\in
Q_K(M')$ such that $z_1xz_2$ has the trivial pair of paths $(p,p)$
in the support, and all the other elements in the support of
$z_1xz_2$ are of the form $(\gamma _1,\gamma _2)$, where $\gamma
_1$ and $\gamma _2$ are paths in $\mathcal T(A)$ starting in $p$
and ending in a common vertex. Then we have that $\delta
(z_1xz_2)\ne 0$, because the component in $Q_K(M')_{(p,p)}$ of
$z_1xz_2$ is sent by $\delta$ to a nonzero diagonal element in
$eJe$.

Therefore the map $\ol{\delta}$ is an isomorphism and consequently
so is the map $\delta$, as desired.
\end{proof}

\section{Acknowledgements}
I am grateful to Ken Goodearl for his useful comments on a
preliminary version of this paper, and to the anonymous referee for
his or her very careful reading of the paper and stimulating
suggestions.


\begin{thebibliography}{99}

\bibitem{Abrams} G. {\sc Abrams}, Morita equivalence for rings with local units.
{\em Comm. Algebra}, {\bf  11} (1983), 801--837.

\bibitem{AA1} G. {\sc Abrams}, G. {\sc Aranda Pino}, The Leavitt path
algebra of a graph, {\em J. Algebra}, {\bf 293}  (2005), 319--334.

\bibitem{AA2} G. {\sc Abrams}, G. {\sc Aranda Pino}, Purely infinite
simple Leavitt path algebras, {\em J. Pure Appl. Algebra}, {\bf 207}
(2006), 553--563..

\bibitem{AM} P. N. {\sc \'Anh}, L. {\sc M\'arki}, Morita
equivalence for rings without identity, {\em Tsukuba J. Math.},
{\bf 11} (1987), 1--16.

\bibitem{Aext} P. {\sc Ara}, Extensions of exchange rings,
{\em J. Algebra}, {\bf 197} (1997), 409--423.

\bibitem{Areal} P. {\sc Ara}, The realization problem for von Neumann regular rings,
{\it Ring Theory 2007. Proceedings of the Fifth China-Japan-Korea
Conference}, (eds. H. Marubayashi, K. Masaike, K. Oshiro, M. Sato);
World Scientific, 2009, pp. 21--37.

\bibitem{AB2} P. {\sc Ara}, M. {\sc Brustenga}, The regular algebra of a quiver,
{\em J. Algebra}, {\bf 309} (2007), 207--235.

\bibitem{AB3} P. {\sc Ara}, M. {\sc Brustenga}, Mixed quiver algebras,
{\em Preprint}, arXiv:0909.0421v1 [math.RA].

\bibitem{AF} P. {\sc Ara}, A. {\sc Facchini}, Direct sum
decompositions of modules, almost trace ideals, and pullbacks of
monoids, {\em Forum Math.}, {\bf 18} (2006), 365--389.

\bibitem{AGP} P. {\sc Ara}, K. R. {\sc Goodearl}, E. {\sc Pardo},
$K_0$ of purely infinite simple regular rings, {\em K-Theory},
{\bf 26} (2002), 69--100.

\bibitem{AGOP} P. {\sc Ara}, K. R. {\sc Goodearl}, K. C. {\sc
O'Meara}, E. {\sc Pardo}, Separative cancellation for projective
modules over exchange rings, {\em Israel J. Math.}, {\bf 105}
(1998), 105--137.

\bibitem{AGOR} P. {\sc Ara}, K. R. {\sc Goodearl}, K. C. {\sc
O'Meara}, R. {\sc Raphael}, $K_1$ of separative exchange rings and
$C^*$-algebras with real rank zero, {\em Pacific J. Math.}, {\bf
195} (2000), 261--275.

\bibitem{AMP} P. {\sc Ara}, M. A. {\sc Moreno}, E. {\sc Pardo},
Nonstable $K$-theory for graph algebras, {\em Algebras Repr.
Theory}, {\bf 10}  (2007),  157--178.

\bibitem{APW} P. {\sc Ara}, F. {\sc Perera}, F. {\sc Wehrung},
Finitely generated antisymmetric graph monoids, {\em J. Algebra},
{\bf 320} (2008), 1963--1982.

\bibitem{BergCo} {\sc G. M. Bergman}, Modules over coproducts of rings,
{\em Trans. Amer. Math. Soc.}, {\bf 200} (1974), 1--32.

\bibitem{Bergman} {\sc G. M. Bergman}, Coproducts and some
universal ring constructions, {\em Trans. Amer. Math. Soc.}, {\bf
200} (1974), 33--88.

\bibitem{BD} G. M. {\sc Bergman}, W. {\sc Dicks}, Universal
derivations and universal ring constructions, {\em Pacific J.
Math.}, {\bf 79} (1978), 293--337.

\bibitem{Brook01} G.  {\sc Brookfield}, Cancellation in primely generated refinement monoids,
 {\em Algebra Universalis}, {\bf 46} (2001), 342--371.

\bibitem{free} P. M. {\sc Cohn}, ``Free Rings and Their
Relations'', Second Edition, LMS Monographs 19, Academic Press,
London, 1985.


\bibitem{Davidson} K. R. {\sc Davidson}, ``$C^*$-algebras by example'',
Fields Institute Monographs, 6, American Mathematical Society,
Providence, RI, 1996.

\bibitem{DPP} N. {\sc Dubrovin}, P. {\sc Pr\'{\i}hoda}, G. {\sc Punisnki},
Projective modules over the Gerasimov-Sakhaev counterexample, {\em
J. Algebra}, {\bf 319} (2008), 3259--3279.

\bibitem{GS} J.L. {\sc Garc\'{\i}a}, J.J.  {\sc Sim\'{o}n}, Morita equivalence for idempotent rings,
 {\em J. Pure Appl. Algebra}, {\bf 76} (1991), 39--56.

\bibitem{Howie} J. M. {\sc Howie}, ``Fundamentals of semigroup theory", London
Mathematical Society Monographs, New Series, 12, Oxford Science
Publications. The Clarendon Press, Oxford University Press, New
York, 1995.

\bibitem{vnrr} K. R. {\sc Goodearl}, ``Von Neumann Regular Rings'',
Pitman, London 1979; Second Ed., Krieger, Malabar, Fl., 1991.

\bibitem{directsum}
K. R. {\sc Goodearl}, ``von Neumann regular rings and direct sum
decomposition problems", Abelian groups and modules (Padova,
1994), Math. Appl. 343, 249--255, Kluwer Acad. Publ., Dordrecht,
1995.

\bibitem{GoodLeavitt} K. R. {\sc Goodearl}, Leavitt path algebras
and direct limits, {\em Contemporary Mathematics}, {\bf 480} (2009),
165--187.

\bibitem{Jacobson}  N.  {\sc Jacobson}, Some remarks on one-sided inverses,
{\em Proc. Amer. Math. Soc.}, {\bf 1} (1950), 352--355.

\bibitem{MM}  P.  {\sc Menal}, J. {\sc Moncasi}, On regular rings with stable range $2$,
{\em J. Pure Applied Algebra}, {\bf 24} (1982), 25--40.

\bibitem{Milnor}  J.  {\sc Milnor}, Introduction to Algebraic K-theory,
Princeton University Press, Princeton, 1971.


\bibitem{Nicholson} W. K. {\sc Nicholson}, Lifting idempotents and exchange
rings, {\em Trans. Amer. Math. Soc.}, {\bf 229} (1977), 269--278.

\bibitem{Perera} F. {\sc Perera}, Lifting units modulo exchange
ideals and $C^*$-algebras with real rank zero, {\em J. Reine
Angew. Math.}, {\bf 522} (2000), 51--62.

\bibitem{Pierce}
R.\,S. {\sc Pierce}, ``Countable Boolean Algebras'', in
\emph{Handbook of Boolean Algebras}, Vol.~\textbf{3}, edited by
J.\,D. Monk with R. Bonnet, Elsevier, 1989, 775--876.

\bibitem{Raeburn} I. {\sc Raeburn}, Graph algebras, CBMS Reg. Conf. Ser. Math.,
vol. 103, Amer. Math. Soc., Providence, RI, 2005.

\bibitem{Rosenberg} {\sc J. Rosenberg}, ``Algebraic $K$-Theory and Its
Applications", Springer-Verlag, GTM 147, 1994.

\bibitem{scho} A. H. {\sc Schofield}, ``Representations of Rings
over Skew Fields'', LMS Lecture Notes Series 92, Cambridge Univ.
Press, Cambridge, UK, 1985.

\bibitem{Wehisrael} F. {\sc Wehrung}, Non-measurability properties of interpolation
vector spaces, {\em Israel J. Math.}, {\bf 103} (1998), 177--206.

\bibitem{WDim} F. {\sc Wehrung}, The dimension monoid of a
lattice, {\em Algebra Universalis}, {\bf 40} (1998), 247--411.





\end{thebibliography}
\end{document}